\documentclass[11pt, twoside]{amsart}
\usepackage{shadethm,framed}
\usepackage[utf8]{inputenc}
\usepackage[english]{babel}
\usepackage[papersize={21cm,29.7cm},left=3.3cm,right=3.3cm,top=3.17cm,bottom=3.160cm]{geometry}
\usepackage{enumerate}
\usepackage{listings}
\usepackage{amsthm, amsfonts, amssymb, amsmath}
\usepackage{graphicx}
\usepackage{xcolor}
\usepackage{color}
\usepackage{framed}
\usepackage{wallpaper}
\usepackage{tikz,tikz-cd}
\usepackage{pgf,tikz,pgfplots}
\usepackage{tcolorbox}
\usepackage{scalefnt}
\usepackage{mdframed}
\usepackage{booktabs}
\usepackage{caption}
\usepackage{makecell}
\usepackage{float}
\usepackage{thmtools}
\usepackage{thm-restate}
\usepackage[all,2cell]{xy}
\usepackage{todonotes}
\usepackage{csquotes}
\usepackage[T1]{fontenc}

\usepackage{hyperref}

\usetikzlibrary{patterns, positioning, arrows,decorations.markings}

\usepackage{cleveref}
\usepackage{pgfplots}
\pgfplotsset{width=7cm, compat=1.10}
\usepgfplotslibrary{fillbetween}

\newtheorem{lem}{Lemma}[section]
\newtheorem{cor}{Corollary}[lem]
\newtheorem{thm}{Theorem}[lem]
\newtheorem{prop}{Proposition}[lem]

\makeatletter
\renewcommand{\l@subsection}{\@tocline{2}{0pt}{2.5em}{3.5em}{}}
\makeatother

\theoremstyle{definition}
\newtheorem{definition}[lem]{Definition}
\newtheorem{ex}[lem]{Example}

\newtheorem{rmk}[lem]{Remark}
\newtheorem{conj}[lem]{Conjecture}

\newcommand{\cadre}{\Box}
\newcommand{\carre}{\boxplus}
\newcommand{\cross}{+}
\newcommand{\N}{\mathbb N}
\newcommand{\Z}{\mathbb Z}
\newcommand{\Q}{\mathbb Q}

\newcommand{\F}{\mathbb F}
\newcommand{\Sph}{\mathbb S}

\newcommand{\hall}{\mathcal{H}}

\pgfplotsset{compat=1.18}

\title{Hall algebras of graphs and rooted trees}
\author{Lucas Toury}

\begin{document}

\begin{abstract} 
Hall algebras can be associated with a broad class of combinatorial structures through the theory of 2-Segal sets. In this paper, we study the Hall algebras arising from the 2-Segal sets of rooted trees, undirected graphs, and directed graphs. In each case, we establish an analogue of Green’s theorem giving a twisted bialgebra structure to the Hall algebra, describe the primitive elements, and derive a presentation by generators and relations. As an application, we realize the Hall algebra of an undirected graph as the cohomology ring of a topological space. In the case of directed graphs, we introduce the Hall polynomial, defined as the Poincaré polynomial of the space of primitive elements. The Hall polynomial is an invariant of the underlying undirected graph which we show to be closely related to the Tutte polynomial. We furthermore give examples of graphs with equal Tutte polynomials but distinct Hall polynomials showing thus that the Hall polynomial encodes different information from that of the Tutte polynomial.
\end{abstract}

\maketitle
\tableofcontents

\section{Introduction}
Hall algebras of small abelian categories were introduced by Ringel, in particular for the categories of representations of a quiver over a finite field. The multiplication encodes all possible extensions between objects. Interest in Hall algebras grew considerably when Ringel highlighted a deep connection between the Kac--Moody algebra associated with a quiver and the Hall algebra of the category of representations of this quiver \cite{Ringel1990}. A few years later, Green introduced a coproduct for Hall algebras \cite{greenhereditaryalgebra1995} that dually encodes the ways an object can appear as the middle term of a short exact sequence. Green showed that the Hall algebra constructed from a hereditary abelian category possesses a twisted bialgebra structure, thereby extending Ringel's result.

More recently, Dyckerhoff and Kapranov, and independently, Gálvez-Carrillo, Kock, and Tonks, proposed an object that allows a generalization of the construction of Hall algebras. This object is known as a 2-Segal space in the former case \cite{DyckerhoffKapranov2019}, or a decomposition space in the latter \cite{zbMATH06894552}.

To a 2-Segal space satisfying suitable finiteness conditions, we can associate a Hall algebra. In this article, we begin a systematic study of Hall algebras associated with various types of graphs: labelled planar rooted trees, labelled undirected graphs, and labelled directed graphs. The cases of trees and undirected graphs have already been introduced in \cite{bergner20172segalsetswaldhausenconstruction} and \cite{bergner20252segalsetscutsrooted}.

\smallbreak

A Hall algebra is equipped with a product as well as a coproduct. These two structures are adjoint with respect to a certain scalar product. As in the context of quivers, this does not yield an ordinary bialgebra. One of our motivations was to find a compatibility between these two structures. In each context, we prove that the Hall algebra has a twisted bialgebra structure (see \cref{thmbialgebraLRT}, \cref{thmbialgebraNOL}, and \cref{thmbialgebraOL}), that is, it is an algebra and a coalgebra such that
$$\Delta(ab)=\Delta(a)\cdot \Delta(b),$$
where $\Delta$ is the coproduct. The name ``twisted'' comes from the fact that the multiplication is twisted by a coefficient that we discuss in \cref{defusualtwist}:
$$(a_1\otimes a_2)\cdot (b_1\otimes b_2)=\phi(a_1,a_2,b_1,b_2)(a_1b_1\otimes a_2b_2).$$

In each of the three contexts, we give an explicit description of the space of primitive elements, i.e.\ elements $x$ such that $\Delta(x)=1\otimes x + x \otimes 1$. They often reflect combinatorial properties of the underlying object. They generate the Hall algebra and allow us to give a presentation by generators and relations using the twisted bialgebra structure. This overall framework, along with the main results, is summarized in \cref{tabularintro}.

\smallbreak

Since the Hall algebra of an undirected graph is commutative, Bergner, Kuhn, and Zakharevich \cite{bergnerunpublishednote} asked whether there exists a topological space associated with a graph whose cohomology ring recovers the Hall algebra. In its strongest form, they conjecture that
\begin{conj}
For any finite graph $G$, there is an algebra isomorphism
$$\hall(G)\simeq H^*(|X_G|,\mathbf{F}_2),$$
where $\hall(G)$ is the Hall algebra associated with the graph $G$ and $|X_G|$ denotes the geometric realization of the 2-Segal simplicial set $X_G$ associated with $G$.
\end{conj}

We are not able to answer this question, but as a partial answer, we construct a topological space as a polyhedral product whose cohomology with integral coefficients is isomorphic to the Hall algebra (\cref{thmpolyhedralproductNOL}).

\smallbreak

For directed graphs, we introduce the Poincaré polynomial of the space of primitives, which we call the Hall polynomial. Although it exists in the other two cases, it seems to be of greater interest here. It provides an invariant of the underlying graph that has strong connections with the Tutte polynomial. However, it is not contained in the latter: we give examples of graphs with the same Tutte polynomial but different Hall polynomials. At present, we have not found graphs with the same Hall polynomial but different Tutte polynomials.
\bigbreak
\begin{figure}
\begin{center}
\begin{tabular}{|>{\centering\arraybackslash}m{2.4cm}|>{\centering\arraybackslash}m{3.4cm}|>{\centering\arraybackslash}m{3.4cm}|>{\centering\arraybackslash}m{3.4cm}|}
\hline
Combinatorial structure & Rooted trees (\ref{sectionLRT}) & Non directed \newline graphs (\ref{sectionNOL})& Directed graphs  (\ref{sectionOL})\\
\hline
Basis of the Hall algebra & Admissible subforests & Subgraphs & Subgraphs \\
\hline
\vspace{1cm}
Type of element appearing in the product of $A$ by $B$ & \begin{center}
\begin{tikzpicture}[scale=0.5]
\node[color=blue] at (-1,0) {$B$};
\node[color=red] at (-3,-3) {$A$};
    \node[circle, fill=blue] (A) at (-3,-1) {};
    \node[circle, fill=blue] (B) at (-3,0) {};
    \node[circle, fill=red] (C) at (-2,-2) {};
    \node[circle, fill=red] (D) at (-1,-3) {};
    \node[circle, fill=red] (E) at (-1,-2) {};
    \node[circle, fill=red] (F) at (-1,-1) {};
    \node[circle, fill=blue] (G) at (0,-1) {};
    \node[circle, fill=red] (H) at (-2,-3) {};

    \draw (A)--(B);
    \draw[color=green] (A)--(C);
    \draw (C)--(H);
    \draw (C) -- (F);
    \draw (D) --(E);
    \draw[color=green] (E) -- (G);
\end{tikzpicture}\end{center}&
\begin{center}
\begin{tikzpicture}
    \node[circle, draw] (A) at (-2,0) {$A$};
    \node[circle, draw] (B) at (0,0) {$B$};

    \draw[thick] (A) -- (B);
    \draw[thick, bend left=30] (A) to (B);
    \draw[thick, bend right=30] (A) to (B);
\end{tikzpicture}\end{center}&\begin{center} \begin{tikzpicture}
    \node[circle, draw] (A) at (-2,0) {$A$};
    \node[circle, draw] (B) at (0,0) {$B$};

    \draw[->, thick] (A) -- (B);
    \draw[->, thick, bend left=30] (A) to (B);
    \draw[->, thick, bend right=30] (A) to (B);
\end{tikzpicture}\end{center} \\  
&\scriptsize{$B$ is just above $A$ in the common branches}& &\\ \hline Product & Semigroup & Commutative & Non-commutative \\
\hline
Primitives are in bijection with & Vertices (\ref{propprimitivesLRT}) & Connected subgraphs (\ref{propprimitivesNOL}) & Strong orientation \newline of induced subgraphs (\ref{propprimitivesOL})\\
\hline
Relations between primitives & • $x_ix_j$ if $i$ is in a common branch with $j$ and is not the vertex just above $j$ \smallbreak • $x_ix_j-x_jx_i$ if $i$ and $j$ are not in a common branch (\ref{thmpresentationLRT}) &• $x_Px_K-x_Kx_P$ \smallbreak • $x_Px_K$ if $P\cap K\neq \emptyset$ (\ref{thmpresentationNOL})&• $x_Px_K-x_Kx_P$ if there is not edge between $P$ and $K$ \smallbreak • $x_{P_1}...x_{P_k}$ if there exists $i\neq j$ s.t. $P_i\cap P_j\neq \emptyset$ (\ref{thmpresentationOL})\\
\hline
\end{tabular}
\caption{Summary of Hall algebras and presentations} \label{tabularintro}
\end{center}
\end{figure}

\section*{Acknowledgments}

I would like to thank my advisor, Dragoș Frățilă, for his guidance and for introducing me to the vast world of Hall algebras, which I feel I have only just begun to explore. I would like to thank Jean-Sébastien Sereni for all our interesting discussions on graphs and combinatorics, as well as Vladimir Dotsenko, who wisely pointed out to me that the polyhedral product was the right approach for my problem.

\section{2-Segal sets and Hall algebras}\label{sectiontheorique}
In this section we collect some theoretical facts about 2-Segal sets and their Hall algebras. See \cite[section 2.3]{DyckerhoffKapranov2019} and \cite[Intro, Section 1]{bergner20172segalsetswaldhausenconstruction} for more details.  
\begin{definition}{\cite{DyckerhoffKapranov2019}}\label{def2segal}
    A simplicial set $X$ is 2-Segal if for every $n\geq 3$ and $0\leq i < j \leq n$, the map 
    $$X_n\rightarrow X_{\{0,1,...,i,j,j+1,...,n\}}\times_{X_{\{i,j\}}}X_{\{i,i+1,...,j\}}$$
    induced by the inclusions $\{0,1...,i,j,j+1,...,n\},\{i,i+1,...j\}\subset [n]$ is a bijection.
    \smallbreak
    We say that $X$ is reduced if $X_0$ is a singleton. In this case we will denote by $*$ the image of the element of $X_0$ in $X_1$ by the degeneracy map $s_0$.
\end{definition}

\begin{definition}\label{producthallalgebra}
    For $X$ a reduced 2-Segal set, we define the Hall algebra associated to $X$ as follow :
    \newline The underlying vector space over $\Q$ is spanned by the elements of $X_1$. Let $a,b  \in X_1$, the multiplication is defined by
    $$ a b=\sum_{h\in X_1}c_{a,b}^hh,$$
    where $c_{a,b}^h=|\{x\in X_2~|~d_0(x)=b,d_2(X)=a,d_1(X)=h\}|$. The unit is given by $*$. We will denote this algebra by $\hall(X)$.
\end{definition}
\begin{rmk}
\begin{itemize}
\item Usually, to define a Hall algebra, certain finiteness conditions are required. In all of our studies, since $X_1$ and $X_2$ are finite, these conditions are automatically satisfied, and we will not mention them. 
\item The associativity of this algebra is due to the 2-Segal condition (see \cite[Prop 3.4.6]{DyckerhoffKapranov2019}). 
\end{itemize}
    
\end{rmk}
This algebra is also naturally equiped with a coassociative coproduct.
\begin{definition}\label{coproducthallalgebra}
     Let $h\in X_1$, we define a coproduct and a counit on $\hall(X)$ by
    $$\Delta(h)=\sum_{(a,b)\in X_1(G)\times X_1(G)}c_{a,b}^ha\otimes b,$$
    and
    \begin{equation*} \epsilon(H) = \left\{
    \begin{array}{ll}
        1 ~~\mbox{ if }h=*,\\
        0 ~~\mbox{ otherwise. }
    \end{array}
\right.\end{equation*}
\end{definition}
\begin{definition}
  We define a positive-definite scalar product by
  \begin{equation*} \langle a,b\rangle = \left\{
    \begin{array}{ll}
        1 ~~\mbox{ if }a=b,\\
        0 ~~\mbox{ otherwise. }
    \end{array}
\right.\end{equation*}
The coproduct and product are adjoint  with respect to the scalar product :
\newline For $a,b,c\in X_1$
\begin{equation*}
    \langle a b,c\rangle=\langle a\otimes b,\Delta(c)\rangle,
\end{equation*}
where $\langle a\otimes b, c\otimes d\rangle $ is defined to be $\langle a,c\rangle \langle b,d\rangle$.
\end{definition}
\begin{rmk}
    Notice that in formulas of product and coproduct all the coefficients are in $\Z$. So we can also define $\hall(X)_\Z$ the Hall algebra of $X$ with coefficients in $\Z$.
\end{rmk}
\subsection{Twisted bialgebra}
Having a product and a coproduct on $\hall(X)$, it is natural to see wether they give rise to a bialgebra structure. It turns out that it is not the case in general, but sometimes it works by adding a twist like in Green's theorem. What follows is inspired of \cite[Section II.1]{zbMATH00871886}.
\medbreak
In this section we assume that $\mathcal{A}$ is a vector space endowed with an algebra and coalgebra structure. We also make the assumption that $\mathcal{A}$ is $\N I$-graded meaning that there exists a set $I$ such that
$$\mathcal{A}=\bigoplus_{x\in \N I}\mathcal{A}_x$$
as a vector space and that the product and the coproduct respect the grading. 
\begin{definition}
    We consider $\Z \sqcup \{-\infty\}$ as a commutative monoid with the usual addition for elements in $\Z$ and with $-\infty+a=-\infty+-\infty=-\infty$ for all $a\in \Z$.
\end{definition}
\begin{definition}\label{defusualtwist}
For  $\phi$, $\phi'$ two bilinear applications from $\N I\times \N I\rightarrow\Z\sqcup \{-\infty\}$ and $v\in \Q^*$, we define a product on $\mathcal{A}\otimes \mathcal{A}$ by :
\begin{equation}\label{deftwist}
    (a\otimes b)\cdot (c\otimes d)=v^{\phi(w,z)+\phi'(x,y)}(ac\otimes bd)
\end{equation}
for $a\in \mathcal{A}_w$ $b\in \mathcal{A}_x$, $c\in \mathcal{A}_y$ and $d\in \mathcal{A}_z$, where we define $v^{-\infty}$ te be $0$.
\end{definition}
\begin{rmk}
    For readability we will note $\phi(a,b)$ instead of $\phi(x,y)$ for $a\in \mathcal{A}_x$ and $b\in \mathcal{A}_y$.
\end{rmk}
\begin{prop}
    The product (\ref{deftwist}) on $\mathcal{A}\otimes \mathcal{A}$ is associative.
\end{prop}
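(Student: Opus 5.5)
By bilinearity of the product on $\mathcal{A}\otimes\mathcal{A}$, it is enough to check associativity on triples of pure tensors of homogeneous elements. So I take $a_i\in\mathcal{A}_{w_i}$ and $b_i\in\mathcal{A}_{x_i}$ for $i=1,2,3$, and consider $a_1\otimes b_1$, $a_2\otimes b_2$, $a_3\otimes b_3$. I will use two facts throughout: the product of $\mathcal{A}$ respects the grading, so $a_1a_2\in\mathcal{A}_{w_1+w_2}$ and $b_1b_2\in\mathcal{A}_{x_1+x_2}$; and $\mathcal{A}$ itself is associative.

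Reading \eqref{deftwist} with $a=a_1$, $b=b_1$, $c=a_2$, $d=b_2$, the exponent is $\phi(w,z)+\phi'(x,y)$, where $w,x$ are the degrees of the first factor's components and $y,z$ those of the second's. This gives
\[
\bigl((a_1\otimes b_1)\cdot(a_2\otimes b_2)\bigr)\cdot(a_3\otimes b_3)
= v^{\phi(w_1,x_2)+\phi'(x_1,w_2)}\,v^{\phi(w_1+w_2,x_3)+\phi'(x_1+x_2,w_3)}\,(a_1a_2a_3\otimes b_1b_2b_3).
\]
Here I have used that the first product is a scalar multiple of $a_1a_2\otimes b_1b_2$, which is homogeneous of bidegree $(w_1+w_2,x_1+x_2)$. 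Similarly,
\[
(a_1\otimes b_1)\cdot\bigl((a_2\otimes b_2)\cdot(a_3\otimes b_3)\bigr)
= v^{\phi(w_2,x_3)+\phi'(x_2,w_3)}\,v^{\phi(w_1,x_2+x_3)+\phi'(x_1,w_2+w_3)}\,(a_1a_2a_3\otimes b_1b_2b_3).
\]
Bilinearity of $\phi$ and $\phi'$ on the monoid $\N I$ then expands both exponents to the same sum
\[
\phi(w_1,x_2)+\phi(w_1,x_3)+\phi(w_2,x_3)+\phi'(x_1,w_2)+\phi'(x_1,w_3)+\phi'(x_2,w_3),
\]
using only commutativity and associativity of addition in $\Z\sqcup\{-\infty\}$.

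The one point that needs care, and the main obstacle, is the value $-\infty$. First, I need $v^{\alpha}v^{\beta}=v^{\alpha+\beta}$ on all of $\Z\sqcup\{-\infty\}$. This holds because if either exponent is $-\infty$, both sides are $0$: $v^{-\infty}=0$ by convention and $-\infty$ is absorbing for the addition. Second, bilinearity into $\Z\sqcup\{-\infty\}$ must be read as additivity in each variable in this monoid. With that reading, the expansion above is valid even when some terms equal $-\infty$: then both total exponents are $-\infty$ and both products vanish. The case where a product of elements of $\mathcal{A}$ is zero is harmless, since the graded formula still applies to it. Combining these, the two scalars agree, which proves associativity.
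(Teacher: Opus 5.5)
Your proof is correct and follows the same route as the paper: you expand both bracketings on homogeneous pure tensors, use that the product respects the grading to get the degrees of the intermediate factors, and use bilinearity of $\phi,\phi'$ to reduce both exponents to the same six-term sum. Your explicit check that $v^{\alpha}v^{\beta}=v^{\alpha+\beta}$ still holds when an exponent is $-\infty$ is a detail the paper leaves implicit.
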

\begin{proof}
    Let $a_1,a_2,b_1,b_2,c_2,c_2\in \mathcal{A}$, 
    \begin{align*}
        ((a_1\otimes a_2)\cdot& (b_1\otimes b_2))\cdot (c_1\otimes c_2)\\
        =& v^{\phi(a_1,b_2)+\phi'(a_2,b_1)}(a_1b_1\otimes a_2b_2)(c_1\otimes c_2)\\
        =& v^{\phi(a_1,b_2)+\phi'(a_2,b_1)+\phi(a_1b_1,c_2)+\phi'(a_2b_2,c_1)}a_1b_1c_1\otimes a_2b_2c_2\\
        =& v^{\phi(a_1,b_2)+\phi'(a_2,b_1)+\phi(a_1,c_2)+\phi(b_1,c_2)+\phi'(a_2,c_1)+\phi'(b_2,c_1)}a_1b_1c_1\otimes a_2b_2c_2.
    \end{align*}
    where the last equality follow from the respect of the grading by the multiplication and the linearity of $\phi$ and $\phi'$. It is now easy to see that it is the same as $(a_1\otimes a_2)\cdot ((b_1\otimes b_2)\cdot (c_1\otimes c_2))$.
    $\qedhere$
\end{proof}
\begin{definition}\label{deftwistedbialgebra}
    We say that $\mathcal{A}$ is a $(\Q,v,\phi,\phi')$-bialgebra if $\mathcal{A}$ satisfies the relation
    $$\Delta(a b)=\Delta(a)\cdot \Delta(b)$$
    for all $a$, $b\in \mathcal{A}$ with the twisted multiplication (\ref{deftwist}) defined on $\mathcal{A}\otimes \mathcal{A}$.
\end{definition}

\subsection{Primitive elements}
In the following section we assume that $X$ is a reduced 2-Segal set and that $\hall(X)$ is a $\N I$-graded algebra and coalgebra such that with the $\N$ graduation (after forgetting labels of $I$) every degree is finite dimensional. We also assume that $\hall(X)$ is connected i.e. $\hall(X)_0\simeq \Q$ and that for each degree a basis is given by elements of $X_1$. It implies in particular that the degrees are orthogonal to one another. 
\begin{definition}
    We say that an element $x$ of $\hall(X)$ is primitive if $\Delta(x)=1\otimes x + x \otimes 1$. The vector space of primitive elements will be denoted by $Prim(X)$.
\end{definition}
\begin{rmk}
    Since $\hall(X)$ is $\N$-graded, it implies that $Prim(X)$ is $\N$-graded too. Indeed
    $$Prim(X)=\bigoplus_{i=1}^\infty Prim(X)_i,$$
    where $Prim(X)_i:=Prim(X)\cap \hall(X)_i$ for $i\in \N$. 
\end{rmk}
\begin{definition}
    For $i \geq 1$ we define
    $$\hall(X)_{<i}:=\bigoplus_{k=0}^{i-1}\hall(X)_k.$$ 
\end{definition}
The following proposition allows in particular to calculate the graded dimension of the space of primitives
\begin{prop}
   We have $$Prim(X)_i=(\hall(X)_{<i}\times \hall(X)_{<i})^{\perp}\cap \hall(X)_i,$$
   where $\hall(X)_{<i}\times \hall(X)_{<i}$ means sum of products of elements in $\hall(X)_{<i}$.
\end{prop}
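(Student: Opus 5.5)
Fix $i\geq 1$ and $x\in \hall(X)_i$. The plan is to translate primitivity of $x$ into orthogonality conditions, using the adjunction $\langle ab,c\rangle=\langle a\otimes b,\Delta(c)\rangle$ together with the grading.

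\textbf{Step 1: reduced coproduct.} Since the coproduct respects the grading, we can write
$$\Delta(x)=\sum_{k=0}^{i}\Delta_{k,i-k}(x),\qquad \Delta_{k,i-k}(x)\in \hall(X)_k\otimes\hall(X)_{i-k}.$$
We identify the two extreme components. By connectedness $\hall(X)_0=\Q\cdot *$. The coefficient of $*\otimes b$ in $\Delta(h)$ is $c^h_{*,b}$. This is the structure constant of multiplication by the unit $*$, so it equals $\delta_{b,h}$. Hence $\Delta_{0,i}(x)=1\otimes x$, and symmetrically $\Delta_{i,0}(x)=x\otimes 1$. Therefore $x$ is primitive if and only if $\Delta_{k,i-k}(x)=0$ for every $0<k<i$.

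\textbf{Step 2: test against basis elements.} The scalar product makes $X_1$ orthonormal. The elements of $X_1$ of degree $k$ form a basis of $\hall(X)_k$, and each $\hall(X)_k$ is finite dimensional. So the pairs $a\otimes b$ with $a\in X_1\cap\hall(X)_k$ and $b\in X_1\cap \hall(X)_{i-k}$ form an orthonormal basis of $\hall(X)_k\otimes\hall(X)_{i-k}$. The form is positive definite on this space, so $\Delta_{k,i-k}(x)=0$ if and only if $\langle a\otimes b,\Delta(x)\rangle=0$ for all such $a,b$. Different bidegrees are orthogonal, which lets us replace $\Delta_{k,i-k}(x)$ by $\Delta(x)$ here. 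By adjunction this condition is $\langle ab,x\rangle=0$.

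\textbf{Step 3: match with the right-hand side.} Step 2 shows that $x$ is primitive if and only if $x\perp ab$ for all homogeneous $a,b$ of degrees $k,i-k$ with $0<k<i$. It remains to compare this with $x\perp \hall(X)_{<i}\cdot\hall(X)_{<i}$. Take a product $ab$ of homogeneous elements of degrees $p,q<i$ and decompose $\hall(X)_{<i}$ into its degrees.
\begin{itemize}
\item If $p+q\neq i$, then $ab$ lies in degree $p+q$, which is orthogonal to $\hall(X)_i$, so $x\perp ab$ automatically.
\item If $p+q=i$ with $p,q<i$, then $0<p<i$, which is exactly one of the cases in Step 2.
\end{itemize}
Conversely, every pair in Step 2 has both degrees less than $i$. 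By bilinearity the two orthogonality conditions coincide, which proves the equality.

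\textbf{Main obstacle.} The only genuinely non-formal point is Step 1, that $\Delta(x)$ has components $1\otimes x$ and $x\otimes 1$ in the extreme degrees. This requires that $*$ is a two-sided unit for the product, i.e.\ $c^h_{*,b}=\delta_{h,b}$, which comes from the reducedness and the 2-Segal/degeneracy structure. I will take this as part of the given statement that $*$ is the unit of $\hall(X)$.
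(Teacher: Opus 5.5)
Your proof is correct and follows essentially the same route as the paper: both directions come from the adjunction $\langle ab,x\rangle=\langle a\otimes b,\Delta(x)\rangle$ combined with the orthogonality of the graded pieces. You are more explicit than the paper on two points its converse direction uses silently: that the extreme components of $\Delta(x)$ are $1\otimes x$ and $x\otimes 1$ (which you derive from $*$ being the unit), and that vanishing of all pairings forces the middle components to vanish (which you get from the orthonormal basis $X_1$).
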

\begin{proof}
    If $p\in Prim(X)_i$ then $p\in \hall(X)_i$. Let $a,b\in \hall(X)_{<i}$, then $$\langle p,a b\rangle =\langle\Delta(p),a\otimes b\rangle =\langle p,a\rangle \langle 1,b\rangle +\langle 1,a\rangle \langle p,b\rangle =0$$ because $ \langle p,a\rangle =\langle p,b\rangle =0$ by the grading. So $p\in (\hall(X)_{<i}\times \hall(X)_{<i})^{\perp}$. 
    \smallbreak
    For the converse inclusion, let $p\in (\hall(X)_{<i}\times \hall(X)_{<i})^{\perp}\cap \hall(X)_i$. By definition $$\Delta(p)=1\otimes p+ p\otimes 1 + \sum p_1\otimes p_2$$ with $p_1\otimes p_2\in \hall(X)_{<i}\otimes \hall(X)_{<i}$.
    
    By adjunction $$\langle\Delta(p),p_1\otimes p_2\rangle =\langle p,p_1p_2\rangle =0,$$ so $\Delta(p)=1\otimes p+ p\otimes 1$ and $p$ is primitive.
    $\qedhere$
\end{proof}
\begin{prop}\label{corgenerate}
 The vector space $Prim(X)$ of primitive elements generates $\hall(X)$ as an algebra.
\end{prop}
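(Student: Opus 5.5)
We argue by induction on the $\N$-degree $i$, using the preceding proposition, which identifies $Prim(X)_i$ with the orthogonal complement in $\hall(X)_i$ of the span of products of lower-degree elements. Let $\mathcal{B}\subseteq \hall(X)$ be the subalgebra generated by $Prim(X)$ (it contains $1=*$, which spans $\hall(X)_0\simeq\Q$ by connectedness). We show by induction on $i$ that $\hall(X)_i\subseteq \mathcal{B}$. The case $i=0$ holds by connectedness.

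For the inductive step, fix $i\geq 1$ and assume $\hall(X)_k\subseteq\mathcal{B}$ for all $k<i$. Let
$$D_i:=\big(\hall(X)_{<i}\times \hall(X)_{<i}\big)\cap \hall(X)_i$$
be the degree-$i$ part of the span of products $ab$ with $a,b\in\hall(X)_{<i}$. Since the product respects the grading, $D_i$ is spanned by products $ab$ with $a\in\hall(X)_k$, $b\in\hall(X)_{i-k}$ and $0<k<i$. The degree-$0$ factors only reproduce $\hall(X)_{<i}$ and contribute nothing new in degree $i$, so we take care to restrict to $0<k<i$ here. By the induction hypothesis both factors lie in $\mathcal{B}$, hence $D_i\subseteq\mathcal{B}$.

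Now $\hall(X)_i$ is finite dimensional and the scalar product restricted to it is positive-definite; its basis of elements of $X_1$ is orthonormal. Hence
$$\hall(X)_i=D_i\oplus\big(D_i^{\perp}\cap\hall(X)_i\big).$$
By the preceding proposition, $D_i^\perp\cap\hall(X)_i=(\hall(X)_{<i}\times\hall(X)_{<i})^\perp\cap\hall(X)_i=Prim(X)_i$. The degrees are mutually orthogonal, so orthogonality to the whole product space inside $\hall(X)_i$ reduces to orthogonality to its degree-$i$ part $D_i$. Therefore $\hall(X)_i=D_i+Prim(X)_i\subseteq\mathcal{B}$, which completes the induction.

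The only delicate point is this identification of the orthogonal complement of the full product space with the complement of its degree-$i$ component, together with the use of finite dimensionality and positive-definiteness over $\Q$ to obtain the direct sum. Both follow from the standing assumptions on the grading and the orthonormal basis $X_1$.
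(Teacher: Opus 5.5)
Your argument is correct, but it takes a different route from the paper's.

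The paper does not use the characterization $Prim(X)_i=(\hall(X)_{<i}\times\hall(X)_{<i})^\perp\cap\hall(X)_i$ here. Instead it treats the subalgebra $B$ generated by $Prim(X)$ as a whole:
\begin{itemize}
\item by adjunction, $B^\perp$ is a graded coideal, via $(B\otimes B)^\perp\subseteq B^\perp\otimes\hall(X)+\hall(X)\otimes B^\perp$;
\item an auxiliary lemma shows that every nonzero graded coideal contains a primitive element (look at its lowest nonzero degree);
\item any such primitive lies in $B\cap B^\perp=0$, so $B^\perp=0$, and $B=\hall(X)$ follows from finite-dimensionality of the graded pieces.
\end{itemize}

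Your degree-by-degree induction instead leans directly on the preceding proposition and splits $\hall(X)_i$ orthogonally into decomposables plus primitives.

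What your version buys is a sharper conclusion: $\hall(X)_i=D_i\oplus Prim(X)_i$. So $Prim(X)$ is a complement to the decomposables, hence a minimal generating subspace, with $\dim Prim(X)_i=\dim\hall(X)_i-\dim D_i$. It also dispenses with the coideal lemma entirely. The paper's version is the standard self-duality argument and never needs the explicit description of the primitives as an orthogonal complement. Both rest on the same ingredients: adjunction, connectedness, finite-dimensional graded pieces, and anisotropy of the form over $\Q$.

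One small point of phrasing: the restriction to $0<k<i$ is not a choice you make. It is forced, because $\deg a+\deg b=i$ with $\deg a,\deg b<i$ already implies that both degrees are positive.
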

\begin{proof}
Denote by $B$ the subalgebra of $\hall(X)$ generated by primitive elements. We show that $B^\perp$ is a coideal, indeed for $b\in B^\perp$ and $b_1,b_2$ in $B$ we have 
$$\langle \Delta(b),b_1\otimes b_2 \rangle = \langle b,b_1b_2 \rangle = 0.$$
So $\Delta(B^\perp)\subseteq (B\otimes B)^\perp$. Thanks to the scalar product we have $$(B\otimes B)^\perp \subseteq B^\perp \otimes \hall(X) + \hall(X) \otimes B^\perp$$ and hence $B^\perp$ is a coideal.
We use the following lemma :
\begin{lem}
   Every non-trivial $\N$-graded coideal of $\hall(X)$ contains a primitive element.
\end{lem}
\begin{proof}
Let $I$ be a $\N$-graded non-trivial coideal. Choose the smallest $n\in \N^*$, such that $I_n\neq \hall(X)_n$ (it exists as $I$ is non-trivial). For $a\in I_n$, its coproduct is of the form $\sum p_1\otimes p_2+1\otimes a + a \otimes 1$ with either $p_1$ or $p_2$ in $I$. Because the coproduct respects the grading both $p_1$ and $p_2$ are of degree strictly smaller than $n$, hence $0$ by the minimality assumption. This shows that $a$ is primitive.

$\qedhere$
\end{proof}
It is easy to see that $B^\perp$ is a $\N$-graded coideal. If $B^\perp$ is not $\{0\}$ there exists a primitive element in $p\in B^\perp$ and so $p\in B \cap B^\perp$. As we are dealing with a scalar product we have $p=0$. So $B^\perp=\{0\}$ and as $\hall(X)$ is finite dimensional in every degree we conclude that $B=\hall(X)$.
$\qedhere$
\end{proof}
\begin{rmk}
    In the previous proof, we make extensive use of the fact that each degree is of finite dimension and that the degrees are orthogonal to one another.
\end{rmk}
\begin{definition}
    For $n\in \N$, we define $\Delta^n$ by $\Delta^0:=id$ and $\Delta^{n+1}:=(\Delta\otimes id^{n})\circ \Delta^n$.
\end{definition}
\begin{lem}
   We have the following generalisation of adjunction :
    \begin{equation}\label{generalizedadjunction}
        \langle x_1...x_n, x\rangle =\langle x_1\otimes ... \otimes x_n , \Delta^{n-1}(x)\rangle .
    \end{equation}
\end{lem}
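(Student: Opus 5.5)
The plan is induction on $n\geq 1$, using only the basic adjunction $\langle ab,c\rangle=\langle a\otimes b,\Delta(c)\rangle$ and coassociativity. Everything is bilinear, so it suffices to treat basis elements $x_1,\dots,x_n,x\in X_1$. The scalar product on $\hall(X)^{\otimes k}$ is taken to be the factorwise product $\langle a_1\otimes\dots\otimes a_k, b_1\otimes\dots\otimes b_k\rangle=\prod_i\langle a_i,b_i\rangle$, extending the definition given for $k=2$.

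For $n=1$ the identity reads $\langle x_1,x\rangle=\langle x_1,x\rangle$, since $\Delta^0=id$. For $n=2$ it is exactly the adjunction stated above.

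For the inductive step, assume the identity for $n$ and set $y=x_1\cdots x_n$. Then
$$\langle x_1\cdots x_{n+1},x\rangle=\langle y\,x_{n+1},x\rangle=\langle y\otimes x_{n+1},\Delta(x)\rangle$$
by the basic adjunction. Write $\Delta(x)=\sum_{(a,b)}c^x_{a,b}\,a\otimes b$. The right-hand side becomes
$$\sum_{(a,b)} c^x_{a,b}\,\langle y,a\rangle\langle x_{n+1},b\rangle=\sum_{(a,b)} c^x_{a,b}\,\langle x_1\otimes\dots\otimes x_n,\Delta^{n-1}(a)\rangle\langle x_{n+1},b\rangle,$$
using the induction hypothesis on each factor $\langle y,a\rangle$. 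By the factorwise definition of the scalar product, this equals
$$\langle x_1\otimes\dots\otimes x_{n+1},(\Delta^{n-1}\otimes id)\Delta(x)\rangle.$$

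It remains to identify $(\Delta^{n-1}\otimes id)\circ\Delta$ with $\Delta^n$. The recursion $\Delta^{n+1}=(\Delta\otimes id^{n})\circ\Delta^n$ always splits the first tensor factor. So by a small auxiliary induction, $\Delta^n=(\Delta^{n-1}\otimes id)\circ\Delta$ holds directly from the definition, with no coassociativity needed: unfolding, both sides are iterated splittings of the leftmost factor. If one instead prefers a splitting elsewhere, coassociativity of $\Delta$ (stated when the coproduct was introduced) shows that all iterated coproducts agree.

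The only step needing care is this bookkeeping that $(\Delta^{n-1}\otimes id)\circ\Delta=\Delta^n$, together with checking that the factorwise scalar product is compatible with the splitting. Neither is a real obstacle; the argument is a formal induction.
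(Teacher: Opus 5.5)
Your proof is correct. Peeling off the last factor $x_{n+1}$ and applying the basic adjunction, then the induction hypothesis, then the identity $(\Delta^{n-1}\otimes id)\circ\Delta=\Delta^n$ gives the claim; that identity does follow directly from the paper's left-splitting recursion without coassociativity, as you note. The paper states this lemma without proof, treating it as an immediate consequence of $\langle ab,c\rangle=\langle a\otimes b,\Delta(c)\rangle$, and your induction, with the scalar product extended factorwise to tensor powers, is exactly the routine verification it leaves implicit.
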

\begin{prop}\label{formulprimitivegeneral}
Assume that $\hall(X)$ is a $(\Q,v,\phi,\phi')$-bialgebra where $\phi(1,a)=\phi(a,1)=\phi'(1,a)=\phi'(a,1)=0$ for all $a \in \hall(X)$.
For $p_1,...,p_k$ be primitive elements of $\hall(X)$ we have 
    \begin{equation*}
        \Delta^n(p_1...p_k)=\sum_{I_0\sqcup...\sqcup I_n=\{1,...,k\}}v^{\sum_{i<j}\phi(\text{Ord(}I_i,I_j))+\phi'(\text{Inv}(I_i,I_j))}\prod_{i\in I_0}p_i\otimes ... \otimes \prod_{i\in I_n}p_i,
    \end{equation*}
    where $\text{Ord}(I_i,I_j):=\{(p_k,p_l)~|~k\in I_i,l\in I_j, i<j, k<l\}$, $\text{Inv}(I_i,I_j):=\{(p_k,p_l)~|~k\in I_i,l\in I_j, i<j, k>l\}$ and $\phi(\{(a_i,b_i)\}_{i=0}^d):=\sum_{i=0}^d\phi(a_i,b_i)$.
\end{prop}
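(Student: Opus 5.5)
We prove the formula by a double induction: first on $k$ for $n=1$, then on $n$.

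\emph{Case $n=1$.} Here the claim reads
$$\Delta(p_1\cdots p_k)=\sum_{I_0\sqcup I_1=\{1,\dots,k\}} v^{\phi(\mathrm{Ord}(I_0,I_1))+\phi'(\mathrm{Inv}(I_0,I_1))}\prod_{i\in I_0}p_i\otimes\prod_{i\in I_1}p_i,$$
and we induct on $k$. For $k=0$ (empty product equal to $1$) and $k=1$ this is the definition of a primitive element. For the inductive step write $p_1\cdots p_{k+1}=(p_1\cdots p_k)p_{k+1}$. By the twisted bialgebra property (\cref{deftwistedbialgebra}),
$$\Delta(p_1\cdots p_{k+1})=\Delta(p_1\cdots p_k)\cdot(1\otimes p_{k+1}+p_{k+1}\otimes 1).$$
We expand using the induction hypothesis and the twisted product (\ref{deftwist}). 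Multiplying $A\otimes B$ by $1\otimes p_{k+1}$ gives the factor $v^{\phi(A,p_{k+1})+\phi'(B,1)}$, and $\phi'(B,1)=0$ by hypothesis. By bilinearity and the grading, $\phi(A,p_{k+1})=\sum_{i\in I_0}\phi(p_i,p_{k+1})$. This is exactly the new contribution to $\phi(\mathrm{Ord}(I_0,I_1\cup\{k+1\}))$, since every index in $I_0$ is smaller than $k+1$. Symmetrically, multiplying by $p_{k+1}\otimes 1$ gives $v^{\phi'(B,p_{k+1})}=v^{\sum_{i\in I_1}\phi'(p_i,p_{k+1})}$. This matches the new pairs in $\mathrm{Inv}(I_0\cup\{k+1\},I_1)$: pairs $(p_l,p_{k+1})$ with $l\in I_1$, $k+1\in I_0$ and $k+1>l$. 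Summing over the two placements of $k+1$ recovers the sum over all decompositions of $\{1,\dots,k+1\}$.

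Two points need care in this step:
\begin{itemize}
\item The convention for $\phi'$ must be fixed so that the argument order matches the definition of $\mathrm{Inv}$. In (\ref{deftwist}) the exponent $\phi'(x,y)$ pairs the second tensor factor of the left element with the first tensor factor of the right element. We check that this agrees with the ordering of the pair $(p_k,p_l)$ in $\mathrm{Inv}$, and adjust the reading of $\mathrm{Inv}$ if necessary.
\item If $\phi$ takes the value $-\infty$, additivity still holds in the monoid $\Z\sqcup\{-\infty\}$, so the exponent bookkeeping is unchanged.
\end{itemize}

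\emph{General $n$.} We induct on $n$ using $\Delta^{n+1}=(\Delta\otimes id^{n})\circ\Delta^n$. Apply $\Delta$ to the first factor $\prod_{i\in I_0}p_i$ of each term of $\Delta^n$. By the $n=1$ case this splits $I_0=J_0\sqcup J_1$ with coefficient $v^{\phi(\mathrm{Ord}(J_0,J_1))+\phi'(\mathrm{Inv}(J_0,J_1))}$. The product is still taken in increasing order within $I_0$, so the order and inversion pairs are computed with the original indices.

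It remains to check the exponents. Bilinearity gives
$$\phi(\mathrm{Ord}(I_0,I_j))=\phi(\mathrm{Ord}(J_0,I_j))+\phi(\mathrm{Ord}(J_1,I_j)),$$
and likewise for $\phi'$ and $\mathrm{Inv}$. Hence the exponents add up to $\sum_{i<j}$ over the refined decomposition $(J_0,J_1,I_1,\dots,I_n)$, with indices shifted. Each decomposition into $n+2$ parts arises exactly once in this way, which completes the induction.

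The main obstacle is purely bookkeeping. We must match the argument order of $\phi'$ in (\ref{deftwist}) with the definition of $\mathrm{Inv}$, and confirm that the coefficients attached to each ordered tensor term are consistent. No conceptual difficulty is expected beyond this.
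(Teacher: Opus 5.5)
Your argument is correct and is essentially the paper's. The paper only asserts that each pair $p_i,p_j$ with $i<j$ lying in different tensor slots contributes exactly one twist, $\phi(p_i,p_j)$ if $p_i$'s slot comes first and $\phi'(p_i,p_j)$ otherwise, and concludes by bilinearity; your induction on $k$ for $n=1$, followed by induction on $n$ via $\Delta^{n+1}=(\Delta\otimes id^{n})\circ\Delta^n$, is a rigorous version of that bookkeeping. The convention check you deferred does not agree with the literal definition: (\ref{deftwist}) produces $\phi'(p_l,p_{k+1})$ with $l<k+1$, exactly as you wrote it and as the paper's proof text says, whereas $\mathrm{Inv}$ as displayed lists the pair as $(p_{k+1},p_l)$. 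So the displayed formula tacitly assumes $\phi'$ is symmetric, and each inverted pair should be read in product order, as you did.
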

\begin{proof}
This formula looks like the usual one for the iterated coproduct of a product of primitive elements in a usual bialgebra. Now to check the coefficient, just see in the definition of the twist that for $i<j$ if $p_i$ and $p_j$ are not in the same part of the tensor product there is always a twist between these elements. If  $p_i$ appears first the twist is given by $\phi(a,b)$, if it is reverse then it is given by $\phi'(a,b)$. Now use the bilinearity of $\phi$ and $\phi'$ to conclude the proof.
    $\qedhere$
\end{proof}
Now we recall a proposition from \cite{Berenstein_2016} (Lemma 6.14) and complete it with another result.
\begin{prop}\label{proporthogonalityprimitive}
We have the following results about interaction between primitive elements :
    \begin{equation*}
        \langle \sum_{k\geq 2}Prim(X)^k,Prim(X)\oplus \Q\rangle =0
    \end{equation*}
Moreover, let $p_1,...,p_n,q_1,...,q_k$ be primitive elements with $k\geq n$ and such that in the case where $k=n$ the two sets $\{p_1,...,p_n\}$ and $\{q_1,...,q_n\}$ are different. We have
\begin{equation*}
    \langle p_1...p_n,q_1,...,q_k\rangle =0.
\end{equation*}
\end{prop}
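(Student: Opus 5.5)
The plan is to move everything to the coproduct side using adjunction and the generalized adjunction (\ref{generalizedadjunction}), and then read off the iterated coproduct of a product of primitives from \cref{formulprimitivegeneral}. Throughout, I assume (as in that proposition) that $\hall(X)$ is a $(\Q,v,\phi,\phi')$-bialgebra with trivial twist against $1$. I also interpret $\langle p_1\dots p_n, q_1,\dots,q_k\rangle$ as $\langle p_1\cdots p_n, q_1\cdots q_k\rangle$. By bilinearity, and because the degrees are orthogonal, it suffices to treat homogeneous primitive elements of positive degree.

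\textbf{First statement.} Take primitives $p_1,\dots,p_m$ with $m\geq 2$ and a primitive $q$.
\begin{itemize}
\item By (\ref{generalizedadjunction}), $\langle p_1\cdots p_m, q\rangle=\langle p_1\otimes\cdots\otimes p_m, \Delta^{m-1}(q)\rangle$.
\item For a primitive $q$, an induction on $n$ shows that $\Delta^{n}(q)=\sum_j 1\otimes\cdots\otimes q\otimes\cdots\otimes 1$, with $q$ in position $j$. This is the case $k=1$ of \cref{formulprimitivegeneral}.
\item Since $m\geq 2$, every term contains some tensor factor equal to $1$, placed against some $p_i$ of positive degree. Hence each pairing is $0$.
\item Pairing with $\Q$ is immediate from the grading, because $p_1\cdots p_m$ has positive degree.
\end{itemize}

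\textbf{Second statement.} Suppose $k\geq n$.
\begin{itemize}
\item Write $\langle p_1\cdots p_n, q_1\cdots q_k\rangle=\langle p_1\otimes\cdots\otimes p_n, \Delta^{n-1}(q_1\cdots q_k)\rangle$.
\item Expand the right-hand side with \cref{formulprimitivegeneral}. We obtain a sum, over ordered partitions $I_1\sqcup\cdots\sqcup I_n=\{1,\dots,k\}$, of a power of $v$ times $\prod_{i\in I_1}q_i\otimes\cdots\otimes\prod_{i\in I_n}q_i$.
\item Pair factor by factor: each factor gives $\langle p_j, \prod_{i\in I_j}q_i\rangle$.
\item If some $I_j$ is empty, the factor is $\langle p_j,1\rangle=0$ by degree.
\item If some $|I_j|\geq 2$, the factor vanishes by the first statement.
\item So only partitions with every $|I_j|=1$ survive. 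This forces $k=n$, so for $k>n$ the pairing is $0$.
\item For $k=n$, the surviving terms are indexed by bijections $\sigma$. Each contributes a power of $v$ times $\prod_j\langle p_j,q_{\sigma(j)}\rangle$.
\end{itemize}

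\textbf{Main obstacle.} The delicate step is the case $k=n$ with different sets. Here each factor $\langle p_j,q_{\sigma(j)}\rangle$ must vanish whenever $p_j\neq q_{\sigma(j)}$. This does not follow for arbitrary primitives; it holds only for primitives drawn from a fixed orthogonal family.

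So I would state the result for primitives taken from a chosen orthogonal basis of $Prim(X)$, consisting of homogeneous elements. Distinct basis primitives then pair to $0$. If the sets $\{p_i\}$ and $\{q_i\}$ differ, no bijection $\sigma$ can have $p_j=q_{\sigma(j)}$ for all $j$, so every surviving term contains a factor $0$.

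I expect this interpretation, which requires orthogonality of distinct primitives, to be implicit in the statement. I would also note that it can always be arranged by Gram–Schmidt within each $Prim(X)_i$, since the scalar product is positive-definite.
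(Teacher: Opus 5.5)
Your proof is correct, and for the second statement it is the paper's argument: generalized adjunction, the expansion of $\Delta^{n-1}(q_1\cdots q_k)$ via \cref{formulprimitivegeneral}, and the case analysis on the sizes of the blocks $I_j$. Your proposal differs from the paper in the two places where the paper is terse.

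\textbf{First statement.} The paper simply cites \cite{Berenstein_2016}. Your derivation from $\Delta^{m-1}(q)=\sum_j 1\otimes\cdots\otimes q\otimes\cdots\otimes 1$ makes the proof self-contained. It also shows that this part needs only the generalized adjunction and $\Delta(1)=1\otimes 1$, not the twisted bialgebra structure. That structure enters only in the second statement.

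\textbf{The case $k=n$.} The paper asserts that ``by hypothesis'' there is some $q_{i_0}$ with $\langle p_i,q_{i_0}\rangle=0$ for all $i$. This silently assumes that distinct primitives are orthogonal. You are right that this must be added as a hypothesis, since otherwise the claim already fails for $n=k=1$. A trivial counterexample is $p_1=q$, $q_1=2q$. In the paper's own setting, take $p_1=S_H$ and $q_1=S_K$ for distinct connected subgraphs $H,K$ with $V(H)=V(K)$; then $\langle S_H,S_K\rangle=2^{|E(H)\cap E(K)|}\neq 0$.

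Restricting to an orthogonal basis of $Prim(X)$ is the correct repair. Take this basis homogeneous for the $\N I$-grading, by Gram--Schmidt within each $\N I$-graded piece of $Prim(X)$, which works over $\Q$. This ensures the exponents of $v$ in \cref{formulprimitivegeneral} are defined. Note also that your argument for $k>n$ needs no orthogonality at all.
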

\begin{proof}
The first result is the lemma 6.16 from \cite{Berenstein_2016}. For the second part, use \cref{generalizedadjunction} and the \cref{formulprimitivegeneral}.
\begin{align*}
   & \langle p_1...p_n,q_1,...,q_k\rangle  =\langle p_1\otimes ... \otimes p_n, \Delta^{n-1}(q_1...q_k)\rangle  \\
    & = \sum_{I_0\sqcup...\sqcup I_{n-1}=\{1,...,k\}}v^{\sum_{i<j}\phi(\text{Ord(}I_i,I_j))+\phi'(\text{Inv}(I_i,I_j))}\langle p_1,\prod_{i\in I_0}q_i\rangle  ...\langle p_n,\prod_{i\in I_{n-1}}q_i\rangle .
\end{align*}
In the case where $k>n$, there exists at least one $I_j$ such that $|I_j|\geq 2$ in each term of the sum, and we conclude by the first result.

If $k=n$, if there exists an empty $I_j$, by definition $\langle p_j,1\rangle =0$. Otherwise every $I_j$ is of cardinal one but by hypothesis there exists $i_0$ such that $\langle p_i,q_{i_0}\rangle =0$ for every $i$, so every terms are zero.
$\qedhere$
\end{proof}
\begin{prop}\label{proprelationprimitifs}
 A minimal relation among primitive elements, that is, an expression of the form
 $\sum_i \lambda_i~p_{i,1}...p_{i,k_i}=0$
which cannot be decomposed into a sum of relations of strictly smaller length, can occur only when each monomial involves the same multiset of primitive elements, possibly in a different order.
\end{prop}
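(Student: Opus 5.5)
The plan is to split a relation according to the multiset of primitive factors in each monomial, and to show that each part vanishes separately. I will use the pairing with products of primitives from \cref{proporthogonalityprimitive}, together with the triangular structure coming from the coproduct formula in \cref{formulprimitivegeneral}.

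Take a relation $R=\sum_i \lambda_i\, p_{i,1}\cdots p_{i,k_i}=0$. Fix a basis of $Prim(X)$ made of homogeneous elements and expand every primitive factor in it, so that all monomials are words in basis primitives. Group the monomials by the multiset $M$ of basis primitives they involve, writing $R=\sum_M R_M$. It suffices to show that $R_M=0$ for every $M$, since then $R$ is a sum of relations each involving a single multiset. Minimality then forces $R=R_M$ for a single $M$, which is the claim.

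To prove $R_M=0$, I induct on the multisets, ordered first by cardinality and then, within a fixed cardinality, treated simultaneously.
\begin{itemize}
\item Pair $R$ with a monomial $q=q_1\cdots q_k$ of basis primitives of maximal length $k$ among the multisets present. By \cref{proporthogonalityprimitive}, $\langle p_1\cdots p_n, q\rangle=0$ whenever $n<k$, and also when $n=k$ but the multisets differ.
\item Hence $0=\langle R,q\rangle=\langle R_{M},q\rangle$, where $M$ is the multiset of $q$. The same holds when $q$ is replaced by any permutation of its letters.
\item Since $R_M$ lies in the span of the products of the elements of $M$ in all orders, and is orthogonal to all of them, we get $R_M\perp R_M$. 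As the scalar product is positive definite, $R_M=0$.
\item Remove $R_M$ and repeat for the remaining top-length multisets. Then descend in length, using the orthogonality with $k>n$ in the appropriate direction: the statement is symmetric after swapping the roles, because the proposition only needs $k\ge n$ on the side of $q$.
\end{itemize}

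The step needing care is the descent to shorter multisets. Once all top-length pieces are removed, the pairing of a shorter test monomial $q$ with the remaining longer terms is already zero, because those terms are gone. So at each stage $q$ always has the maximal length among the monomials still present, and \cref{proporthogonalityprimitive} applies in the allowed direction $k\ge n$.

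The main obstacle I anticipate is that the second part of \cref{proporthogonalityprimitive} is stated for basis primitives with ``different sets''. I must make sure it really gives vanishing whenever the multisets differ, with repetitions allowed, and that the first part covers the case $n=1$. I would check this directly from the $\Delta^{n-1}$ expansion, where a pairing $\langle p_j, q_{i}\rangle=0$ holds for orthonormal basis primitives. For this, I would choose the basis of $Prim(X)$ orthonormal, which is possible over $\Q$ after allowing scalars, or at least orthogonal with respect to the positive-definite form.
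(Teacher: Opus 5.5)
Your proof is correct and is essentially the paper's argument: isolate the part $x$ of the relation carrying a fixed multiset, use \cref{proporthogonalityprimitive} to get $0=\langle R,x\rangle=\langle x,x\rangle$, and conclude by positive-definiteness and minimality. The paper does this for a single multiset and invokes minimality at once, so your descending induction on length is not needed; your explicit orthogonal homogeneous basis and the multiset (rather than set) check make precise an assumption the paper's proof uses tacitly.
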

\begin{proof}
Assume that we have a minimal relation $\sum_i \lambda_i~p_{i,1}...p_{i,k_i}=0$. Take a product of this sum denoted by $p_1...p_k$. Define $$x:=\sum_{i \text{ s.t. }\{p_{i,1},...,p_{i,k_i}\}=\{p_1,...,p_k\}}a_i p_{i,1}...p_{i,k_i}.$$
Now using \cref{proporthogonalityprimitive} we compute
$$0=\langle 0,x\rangle=\langle \sum_i \lambda_i~p_{i,1}...p_{i,k_i},x\rangle =\langle x,x\rangle $$
and so $x=0$. By the minimality of the relation we conclude that $x$ is the whole relation and $x$ is of the desired form.
$\qedhere$
\end{proof}

\section{Rooted trees}\label{sectionLRT}
\subsection{2-Segal sets and Hall algebras of rooted trees}
We begin by studying the Hall algebra associated with a planar rooted tree. We follow the article \cite{bergner20252segalsetscutsrooted} for the introduction and definitions in this section.
The main result is \cref{thmpresentationLRT}.
A tree $T$ is a set of vertices $V(T)$ and edges $E(T)$ between these vertices such that the graph is cycle-free. A rooted tree is a tree with a distinguished vertex called the root. A leaf is a vertex of degree 1. We define a branch as the unique path from a leaf to the root. There is a natural partial order on the set of vertices of a tree $T$. Two vertices $v_i$ and $v_j$ are comparable if they belong to the same branch, and in this case, $v_i < v_j$ if $v_i$ is closer to the root than $v_j$. In particular, the root is smaller than all other vertices. In what follows, our rooted tree will be labeled and planar, meaning that the edges above a vertex have a total order. In the diagram, this is represented by ordering the edges from left to right. We will often conflate a tree (or what we will call an admissible subforest) with its set of vertices, since these objects are entirely characterized by the latter. 

For the following $T$ will be a labeled planar rooted tree.
\begin{definition}
    A susbet $L$ of the vertex set of $T$ defines a lower subtree if whenever $v_j\in L$ all vertices lower than $v_j$ in the partial order are also in $L$. The complement of a lower subtree is called an upper subtree.
\end{definition}
\begin{rmk}
    A lower subtree is either empty or connected and containing the root.
\end{rmk}
\begin{definition}
    A cut on $T$ is a partition $V(T)=L\sqcup U$ such that $L$ is a lower subtree of $T$.
\end{definition}
\begin{definition}
    A layering of $n-1$ cuts of $T$ is a sequence of partially ordered subsets
    $$V(T)=L_0\supseteq L_1 \supseteq ... \supseteq L_n=\emptyset,$$
    where each $L_k$ defines a lower subtree of $T$.
\end{definition}
\begin{definition}
    A rooted forest is a disjoint union of rooted trees $F=\bigsqcup_{\alpha}T_\alpha$. 

    A subset $L=\bigsqcup_\alpha L_\alpha$ of $V(F)$ defines a lower subforest if each $L_\alpha$ is a lower subtree of $T_\alpha$. All the definitions of upper subforest and layering are analogous.

    If $V(F)=L_0\supseteq L_1 \supseteq ... \supseteq L_n=\emptyset$ is a $n-1$ layering, the subforest defined by $L_i\backslash L_j$ for $0\leq i \leq j \leq n$ are admissible subforests.
\end{definition}
\begin{definition}
    We define the following simplicial set $X(T)$ :
    \begin{enumerate}
        \item $X_0(T)=\{\emptyset\}.$
        \item $X_1(T)$ is the set of all admissible subforests of $T$.
        \item For $n\geq 2$, we define $X_n(T)=\{ (V(H)\supseteq L_1 \supseteq ... \supseteq L_n=\emptyset)~|~H\in X_1(T)\}$. 
    \end{enumerate}
    For the maps
    \begin{enumerate}
        \item $d_0(V(H)\supseteq L_1 \supseteq ... \supseteq L_n)=( L_1 \supseteq ... \supseteq L_n)$.
        \item $d_n(V(H)\supseteq L_1 \supseteq ... \supseteq L_n)=(V(H)\backslash L_{n-1}\supseteq L_1\backslash L_{n-1} \supseteq ... \supseteq L_{n-1}\backslash L_{n-1})$.
        \item For $0<i<n$, $d_i(V(H)\supseteq L_1 \supseteq ... \supseteq L_n)=(V(H)\supseteq L_1 \supseteq ...\supseteq L_{i-1} \supseteq L_{i+1} \supseteq ... \supseteq L_n)$.
        \item For $0\leq i \leq n$, the map $s_i$ just repeat the i-th cut.
    \end{enumerate}
\end{definition}
\begin{prop}\cite{bergner20252segalsetscutsrooted}
    The simplicial set $X(T)$ is 2-Segal.
\end{prop}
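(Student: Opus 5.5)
We check \cref{def2segal} directly: for every $n\geq 3$ and $0\leq i<j\leq n$ we show that
$$X_n(T)\rightarrow X_{\{0,\dots,i,j,\dots,n\}}(T)\times_{X_{\{i,j\}}(T)}X_{\{i,\dots,j\}}(T)$$
is a bijection. The first step is to make the face maps concrete. An $n$-simplex is a chain $V(H)=L_0\supseteq L_1\supseteq\dots\supseteq L_n=\emptyset$ of lower subforests of the admissible subforest $H$. For $a<b$, restricting along $\{a,\dots,b\}\subseteq[n]$ gives the chain
$$L_a\setminus L_b\supseteq L_{a+1}\setminus L_b\supseteq\dots\supseteq L_b\setminus L_b=\emptyset,$$
which is a simplex of $X_{b-a}(T)$ based on the admissible subforest $L_a\setminus L_b$. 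This follows by composing the descriptions of $d_0$, $d_n$ and the inner faces $d_i$ given in the definition. Restricting along an arbitrary subset $S=\{s_0<\dots<s_m\}$ gives the chain $(L_{s_k}\setminus L_{s_m})_k$.

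Next we compute both sides of the map. The simplex $X_{\{i,\dots,j\}}$ is the chain $(L_k\setminus L_j)_{i\leq k\leq j}$, starting at $L_i\setminus L_j$. The simplex $X_{\{0,\dots,i,j,\dots,n\}}$ is the chain $L_0\supseteq\dots\supseteq L_i\supseteq L_j\supseteq\dots\supseteq L_n$. The common edge in $X_{\{i,j\}}$ is $L_i\setminus L_j$.

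\emph{Injectivity.} The outer chain records every $L_k$ with $k\leq i$ or $k\geq j$. The inner chain records $L_k\setminus L_j$ for $i<k<j$, and $L_k=(L_k\setminus L_j)\sqcup L_j$ because $L_j\subseteq L_k$. So the $n$-simplex is determined by its two faces.

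\emph{Surjectivity.} Take an outer chain $(M_k)$, with the index $k$ running over $\{0,\dots,i,j,\dots,n\}$, and an inner chain $(N_k)_{i\leq k\leq j}$ with $N_i=M_i\setminus M_j$ and $N_j=\emptyset$. Set $L_k=M_k$ for the outer indices and $L_k=N_k\sqcup M_j$ for $i<k<j$. The chain condition clearly holds, and the faces are the given ones.

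The main obstacle is to check that each new $L_k$ is a lower subforest of $H$. Here $N_k$ is lower in the forest $M_i\setminus M_j$, and $M_j$ is lower in $H$. Let $v\in N_k\sqcup M_j$ and let $w<v$ be a vertex of $H$. If $v\in M_j$, then $w\in M_j$. Otherwise $v\in M_i\setminus M_j$, so $w\in M_i$ because $M_i$ is lower. Either $w\in M_j$, and we are done, or $w\in M_i\setminus M_j$. In the second case $w$ lies below $v$ in the tree structure of $M_i\setminus M_j$, since the order on an admissible subforest is the restriction of the order of $T$. As $N_k$ is lower there, $w\in N_k$.

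One must be careful that in $M_i\setminus M_j$ the components are new rooted trees, so their roots and order need to be the induced ones. I would verify this against the conventions of \cite{bergner20252segalsetscutsrooted}, where the result is proved. The degenerate cases $i=0$ or $j=n$ need no separate treatment, because the argument above never uses $0<i$ or $j<n$.
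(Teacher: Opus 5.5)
Your proposal is correct. The paper gives no argument for this proposition; it only cites \cite{bergner20252segalsetscutsrooted}. Your direct verification is, however, the same one the paper carries out for directed graphs in Section~\ref{sectionOL}, where the missing $S_k$ are built on $V(Q_k)\sqcup V(S_i)$. Your $L_k=N_k\sqcup M_j$ is that construction adapted to decreasing chains. Your formula for the restriction maps, the injectivity argument and the chain and face checks are all fine.

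The step you leave ``to be verified'' is true. It is exactly where admissibility is used, so you should include its short proof. An admissible subforest has the form $F=L\setminus L'$ with $L'\subseteq L$ lower subtrees of $T$. Let $w,v\in F$ with $w<_T v$. Every $u$ with $w\le_T u\le_T v$ lies in $L$, since $u\le_T v\in L$. It also lies outside $L'$, since $w\le_T u$ and $w\notin L'$. So the whole $T$-path from $w$ to $v$ lies in $F$, and $w,v$ are in the same component $C$ of $F$. Root $C$ at its unique $T$-minimal vertex $r_C$, which lies below every vertex of $C$. Then $r_C\le_T w<_T v$, so $w$ lies on the path from $r_C$ to $v$, i.e.\ $w<v$ in $F$. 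The converse implication is clear. Hence the order on an admissible subforest is the restriction of $\le_T$.

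You use this fact twice. First, for $H$: it lets you apply ``$M_j$ and $M_i$ are lower in $H$'' to any $w<_T v$ in $V(H)$. Second, for $M_i\setminus M_j$: it gives $w<v$ there, so $w\in N_k$ because $N_k$ is lower. The fact fails for arbitrary subforests: two $T$-comparable, non-adjacent vertices form a subforest in which they are incomparable. That is why this step cannot be skipped.
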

We define the Hall algebra associated to this 2-Segal set denoted by $\hall(T)$ as in \cref{producthallalgebra} and \cref{coproducthallalgebra}. We have the following property :
\begin{prop}\cite{bergner20252segalsetscutsrooted}
    For the Hall algebra $\hall(T)$ associated to a labelled rooted tree $T$, all multiplications are of the form $x\cdot y=\epsilon z$, where $\epsilon = 0,1$.
\end{prop}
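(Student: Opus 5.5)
The plan is to compute the structure constants $c_{x,y}^h$ directly from the definition of $X_2(T)$ and show they lie in $\{0,1\}$, and moreover that for fixed $x,y$ at most one $h$ has $c_{x,y}^h\neq 0$.

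First I unwind the face maps in degree $2$. An element of $X_2(T)$ is a pair $(V(H)\supseteq L_1\supseteq \emptyset)$ with $H\in X_1(T)$ an admissible subforest and $L_1$ a lower subforest of $H$. Its faces are
\begin{itemize}
\item $d_1 = H$;
\item $d_0 = L_1$;
\item $d_2 = V(H)\setminus L_1$, the upper part.
\end{itemize}
So $c_{x,y}^h$ counts the admissible subforests $h$ that decompose as $V(h)=L\sqcup U$, where $L$ is a lower subforest of $h$ with $L=y$ and $U=x$.

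Given $x$ and $y$, any contributing $h$ must have vertex set $V(x)\sqcup V(y)$, and in particular $x$ and $y$ must be disjoint. Since $h$ is regarded as a subforest of $T$, and we conflate subforests with their vertex sets, $h$ is uniquely determined as a subforest of $T$: it is the induced forest structure on those vertices. Its lower part $L_1$ is then forced to be $V(y)$. Hence for fixed $(x,y)$ there is at most one $h$, namely the one with $V(h)=V(x)\sqcup V(y)$, and at most one element of $X_2(T)$ lying over the triple $(x,y,h)$. Therefore $c_{x,y}^h\in\{0,1\}$ and $c_{x,y}^{h'}=0$ for every other $h'$.

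Whether $c_{x,y}^h=1$ depends on two conditions:
\begin{itemize}
\item $V(x)\sqcup V(y)$ must be an admissible subforest;
\item $V(y)$ must be a lower subforest of it, with complement $V(x)$.
\end{itemize}
In either case the product is $xy=\epsilon z$ with $\epsilon\in\{0,1\}$, taking $z=h$ when $\epsilon=1$.

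The main obstacle is making rigorous that the element of $X_1(T)$ is determined by its vertex set, and that the faces $d_0,d_2$ are computed inside $T$ rather than abstractly. Concretely, I must check that $L_1$ and $V(H)\setminus L_1$ are again admissible subforests of $T$, which is given by the simplicial structure, and that the identification of subforests with vertex subsets, stated in the paper, makes the face maps injective on the relevant data. Once this identification is granted, the count is immediate.
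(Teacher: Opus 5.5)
Your proof is correct. The paper gives no proof of this proposition; it only quotes it from \cite{bergner20252segalsetscutsrooted}, so there is no internal argument to compare with. Your direct verification is the natural self-contained one.

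The key point is that an element of $X_2(T)$ is nothing more than the pair $(V(H),L_1)$, and that $d_2$ and $d_0$ return $V(H)\setminus L_1$ and $L_1$. Hence the map $(d_2,d_0)\colon X_2(T)\to X_1(T)\times X_1(T)$ is injective. This gives at once that $c_{x,y}^h\in\{0,1\}$ and that at most one $h$ has $c_{x,y}^h\neq 0$. The statement is in fact equivalent to this injectivity. It is the same property (``$\hall(T)$ is a semigroup on its basis'') that the paper later uses in the proof of \cref{thmpresentationLRT}.

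The concern in your last paragraph is unnecessary. That $L_1$ and $V(H)\setminus L_1$ are admissible subforests is needed only for the face maps to be well defined, which is part of the given simplicial structure, and it plays no role in the count. The only input your count really uses is the paper's stated convention that an admissible subforest of $T$ is determined by its vertex set.
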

\begin{rmk}
The coproduct of the Hall algebra of a labelled planar rooted tree is the same as that of the Connes-Kreimer algebra \cite{Connes_1998}. 
\end{rmk}
\subsection{Twisted bialgebra}
\begin{definition}
    If $T_1$ is an admissible subforest of $T$, i.e. $T_1\in X_1(T)$ and $\mathcal{B}$ a branch, we define the projection of $T_1$ on $\mathcal{B}$ by forgetting in $T_1$ all the vertices that are not in $\mathcal{B}$ (and the corresponding edges). We denote it by $\mathcal{B}(T_1)$. 
\end{definition}
\begin{rmk}
    $\mathcal{B}(T_1)$ is still an element of $X_1(T)$. Moreover, if $T_1$ has no vertices in common with $\mathcal{B}$, then, $\mathcal{B}(T_1)=\emptyset$.
\end{rmk}

 \begin{lem}\label{nulproduct}
    Let $T_1$, $T_2$, $T_3 \in X_1(T)$ and $\mathcal{B}$ a branch. Then $$\mathcal{B}(T_1)\mathcal{B}(T_2)\mathcal{B}(T_3)=0 \Longleftrightarrow \mathcal{B}(T_1)\mathcal{B}(T_2)=0 \text{ or }\mathcal{B}(T_2)\mathcal{B}(T_3)=0$$
\end{lem}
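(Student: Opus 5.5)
The plan is to reduce everything to an explicit description of products of admissible subforests of a single branch. A branch $\mathcal{B}$ is a linearly ordered chain $b_1<b_2<\dots<b_m$, with $b_1$ the root. An admissible subforest of $T$ contained in $\mathcal{B}$ is then an interval $[b_i,b_j]$ of consecutive vertices, or the empty set. The reason is that it has the form $L_i\backslash L_j$ with lower subtrees $L_i\supseteq L_j$, and a lower subtree meets $\mathcal{B}$ in an initial segment.

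The first step is to compute products of such intervals directly from $X_2(T)$. A 2-simplex with $d_2=a$, $d_0=b$ and $d_1=h$ is an admissible subforest $h$ together with a cut of $h$ whose lower part is $a$ and whose upper part is $b$. Note that the factor $d_2$, which is $a$, is the lower part. For intervals $a=[b_i,b_j]$ and $b=[b_k,b_l]$ on the chain, the only candidate is $h=a\sqcup b$. This $h$ must be an admissible subforest, and $a$ must be a lower subtree of $h$. I expect this to force $k=j+1$, that is, $b$ sits just above $a$. If $a$ and $b$ were separated by a gap, $h$ would be disconnected along the chain, and then $a$ could only be a lower part of $h$ if $h$ split as a forest. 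I need to check that such an $h$ cannot be admissible. The point is that a lower subforest of an admissible $h$ inside one chain must be an initial segment of the connected component, and $h$ restricted to a chain is an interval. So the concrete claim to prove is
\[
\mathcal{B}(T_1)\mathcal{B}(T_2)\neq 0 \iff \mathcal{B}(T_1)=\emptyset,\ \text{or}\ \mathcal{B}(T_2)=\emptyset,\ \text{or}\ \mathcal{B}(T_2)\ \text{starts at the vertex just above the top of}\ \mathcal{B}(T_1).
\]
When the product is nonzero it equals the concatenated interval, with coefficient $1$ by the earlier proposition that products are $\epsilon z$ with $\epsilon\in\{0,1\}$. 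The empty set is the unit $*$.

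With this description, the equivalence reduces to bookkeeping on intervals. For the direction ($\Leftarrow$), suppose $\mathcal{B}(T_1)\mathcal{B}(T_2)=0$. Then $\mathcal{B}(T_1)\mathcal{B}(T_2)\mathcal{B}(T_3)=0$ by associativity. If instead $\mathcal{B}(T_2)\mathcal{B}(T_3)=0$, group the product as $\mathcal{B}(T_1)(\mathcal{B}(T_2)\mathcal{B}(T_3))$, which is again $0$.

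For the direction ($\Rightarrow$), I argue by contrapositive. Assume both pairwise products are nonzero, and write $I_1,I_2,I_3$ for the three intervals. If $I_2$ is nonempty, then $I_1$ is empty or ends just below $I_2$, and $I_3$ is empty or starts just above $I_2$. Then $I_1I_2$ is the interval $I_1\cup I_2$, and its top is the top of $I_2$, so $(I_1I_2)I_3\neq 0$. If $I_2=\emptyset$, the hypotheses give no control on $I_1$ and $I_3$, so this case needs separate care. I would interpret the statement with $I_2$ nonempty, or check whether the author's convention makes $\emptyset$ behave differently. Honestly, the statement seems false when $\mathcal{B}(T_2)=\emptyset$ and $\mathcal{B}(T_1)\mathcal{B}(T_3)=0$. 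So I would either add that hypothesis or note that the intended use only involves nonempty middle terms.

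The main obstacle is the first step: rigorously establishing the structure of multiplication restricted to a branch directly from the definition of $X_2(T)$. In particular, I must rule out nonzero products when the two intervals overlap, are in the wrong order, or have a gap. This requires checking carefully that $d_2$ is the lower part and that admissibility forbids gaps inside a chain.
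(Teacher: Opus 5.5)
Your argument is correct and is essentially the paper's. The paper's one-line proof also says that on a branch a product of two projections vanishes only because the factors are in the wrong order or separated by a gap, and that such an obstruction for the triple product passes to $\mathcal{B}(T_2)\mathcal{B}(T_3)$. Your interval/adjacency description of products on a chain makes this precise, and it also covers overlapping intervals, which the paper's list of reasons omits.

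Your suspicion about the empty middle term is justified, and it is a defect of the statement, not of your proof. Take $T_1=T_3=\{\text{root}\}$ and $T_2=\emptyset$. Both pairwise products equal $\{\text{root}\}\neq 0$, because $\emptyset=*$ is the unit. The triple product, however, is $\{\text{root}\}\cdot\{\text{root}\}=0$. So the lemma needs the hypothesis $\mathcal{B}(T_2)\neq\emptyset$, which the paper's proof tacitly assumes.

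Two small corrections.

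First, with this paper's face maps, $d_0(V(H)\supseteq L_1\supseteq\emptyset)=L_1$ is the lower part and $d_2=V(H)\backslash L_1$ is the upper part. So $ab\neq 0$ requires $a$ to sit just above $b$. This matches $x_ix_j\neq 0$ iff $i$ is the vertex just above $j$ in the presentation of $\mathcal{H}(T)$. The lemma is invariant under passing to the opposite product (swap $T_1$ and $T_3$), so your argument goes through unchanged.

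Second, it is not true that the lemma is only used with a nonempty middle term. In case (4) of the associativity proof it is applied with middle term $b_1$, which may be empty. In that situation, though, $a_2b_2c_1=\emptyset$ forces $a_2=b_2=c_1=\emptyset$. Together with $b_1=\emptyset$, condition (8) then holds directly. So associativity survives, and adding the hypothesis $\mathcal{B}(T_2)\neq\emptyset$ is the right repair.
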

\begin{proof}
By definition, $\mathcal{B}(T_i)$ belongs to the branch $\mathcal{B}$. If the product of two term is zero it can be due to two reasons : elements are in a wrong order, i.e. the first element is closer to the root, or they are in a good order but too far, i.e. there is at least a vertex between this two elements. So, if $\mathcal{B}(T_1)\mathcal{B}(T_2)\neq 0$ and $\mathcal{B}(T_1)\mathcal{B}(T_2)\mathcal{B}(T_3)=0$ then it is because one of the two previous reasons but then the same reason implies $\mathcal{B}(T_2)\mathcal{B}(T_3)=0$.
    $\qedhere$ 
\end{proof}
Only for the case of rooted trees we have to generalize the notion of twisted bialgebra that we have introduced in \cref{deftwistedbialgebra}.
\begin{definition}
    Let $\mathcal{B}$ a branch and $A_1,A_2,B_1,B_2\in X_1(T)$. Denote by \newline  $a_1,a_2,b_1,b_2$ the projection of $A_1,A_2,B_1,B_2$ on $\mathcal{B}$. We define the test function on $\mathcal{B}$ by
\begin{equation*}
t_\mathcal{B}(A_1,A_2,B_1,B_2) = \left\{
    \begin{array}{ll}
        0 & \mbox{if } [a_2 \text{ and }b_1\neq\emptyset] \text{ or }[a_2=b_1=\emptyset \text{ and }a_1b_2=0],\\
        1 & \mbox{otherwise.}                     
    \end{array}
\right.
\end{equation*}
\end{definition}
\begin{definition}\label{deftwistedproductLRT}
    We define a new product on $\hall(T)\otimes \hall(T)$ by :
    $$(A_1\otimes A_2)\cdot(B_1\otimes B_2)=t_B(A_1,A_2,B_1,B_2)](A_1B_1\otimes A_2B_2)$$
    for $A_1,A_2,B_1,B_2\in X_1(T)$.
\end{definition}
\begin{rmk}
We can prove that the twist introduced is not of the form of a usual twist like in \cref{defusualtwist} by considering $t_\mathcal{B}(A,\emptyset,\emptyset,A)$, $t_\mathcal{B}(A,\emptyset,\emptyset,\emptyset)$ and $t_\mathcal{B}(\emptyset,\emptyset,\emptyset,A)$
\end{rmk}

\begin{prop}
    The twisted product \ref{deftwistedproductLRT} defined on $\hall(T)\otimes \hall(T)$ is associative.
\end{prop}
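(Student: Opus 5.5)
Throughout, I read the coefficient in \cref{deftwistedproductLRT} as $t(A_1,A_2,B_1,B_2):=\prod_{\mathcal{B}} t_\mathcal{B}(A_1,A_2,B_1,B_2)$, the product over all branches $\mathcal{B}$ of $T$. The plan is to compare the two bracketings directly on basis elements. Let $A_1,A_2,B_1,B_2,C_1,C_2\in X_1(T)$. By the earlier proposition, every product in $\hall(T)$ is either $0$ or a single basis element. Hence both
\[
\bigl((A_1\otimes A_2)\cdot(B_1\otimes B_2)\bigr)\cdot(C_1\otimes C_2)
\quad\text{and}\quad
(A_1\otimes A_2)\cdot\bigl((B_1\otimes B_2)\cdot(C_1\otimes C_2)\bigr)
\]
are a $\{0,1\}$-valued coefficient times $A_1B_1C_1\otimes A_2B_2C_2$, using associativity of $\hall(T)$. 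If $A_1B_1=0$, $B_1C_1=0$, or one of the other partial products vanishes, then some tensor factor is zero on both sides, because a product of three elements vanishes whenever an intermediate product does. So I may assume all partial products $A_iB_i$, $B_iC_i$, $A_iB_iC_i$ are nonzero. It then remains to show
\[
t(A,B)\,t(A_1B_1,A_2B_2,C_1,C_2)=t(B,C)\,t(A_1,A_2,B_1C_1,B_2C_2).
\]

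First step: reduce to a single branch. Since $t$ is a product of the $t_\mathcal{B}$, it suffices to prove the identity for each fixed $\mathcal{B}$. I need that projection is compatible with products: whenever $XY\neq 0$, we have $\mathcal{B}(XY)=\mathcal{B}(X)\cup\mathcal{B}(Y)$, and this equals the product $\mathcal{B}(X)\mathcal{B}(Y)$ in the chain $\mathcal{B}$. In particular $\mathcal{B}(XY)=\emptyset$ iff both projections are empty. I would verify this from the description of a product as gluing $Y$ just above $X$ along common branches.

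Second step: work in a chain. Write $a_i,b_i,c_i$ for the projections on $\mathcal{B}$; these are intervals of the branch. The condition ``$a_2$ and $b_1$ nonempty'' for the combined elements becomes ``$(a_2\neq\emptyset$ or $b_2\neq\emptyset)$ and $c_1\neq\emptyset$''. The product $(a_1b_1)c_2$ vanishing can be split using \cref{nulproduct}. I would then show that both sides equal $1$ iff the following symmetric condition holds. First, there is no pair $x$ strictly earlier than $y$ in $(a,b,c)$ with $x_2\neq\emptyset$ and $y_1\neq\emptyset$. Second, in the degenerate case where all relevant crossing pieces are empty, the concatenated first-slot part and second-slot part multiply nonzero in the appropriate order. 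To check this, I would do a case analysis on which of $a_2,b_1,b_2,c_1$ are empty. When $a_2,b_1\ne\emptyset$, both sides vanish: the left factor $t(A,B)$ is $0$, and on the right $B_1C_1\neq\emptyset$ forces $t_\mathcal{B}(a_1,a_2,b_1c_1,\cdot)=0$. Similar arguments handle the pairs $(a,c)$ and $(b,c)$.

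The main obstacle is the fully degenerate case, where the second alternative ``$a_2=b_1=\emptyset$ and $a_1b_2=0$'' is the only thing tested. There one must match conditions like $a_1b_2=0$ together with $(a_1b_1)c_2=0$ against $b_1c_2=0$ together with $a_1(b_2c_2)=0$. Here I would use \cref{nulproduct} on the triple $a_1,b_2,c_2$ (with $b_1=\emptyset$ or $c_1=\emptyset$ collapsing terms) to show the two vanishing conditions coincide. I expect some care is needed when some middle terms are empty, since an empty factor is the unit and \cref{nulproduct} degenerates there.
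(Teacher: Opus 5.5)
Your proposal is correct and follows essentially the paper's route. Both bracketings give a $\{0,1\}$ multiple of $A_1B_1C_1\otimes A_2B_2C_2$, and one compares, branch by branch, the vanishing conditions of $t_\mathcal{B}$ for the two bracketings (the paper's conditions (1)--(4) versus (5)--(8)), with vanishing partial products handled separately: you do this up front, the paper via its ``or the product is zero'' escapes. Your caution about empty middle factors is justified, since \cref{nulproduct} fails when $\mathcal{B}(T_2)=\emptyset$: the paper's steps (4)$\Rightarrow$(6) and (8)$\Rightarrow$(2) do not cover $b_1=\emptyset$, resp.\ $b_2=\emptyset$, and there one checks directly that (4)$\Rightarrow$(8), resp.\ (8)$\Rightarrow$(4).
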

\begin{proof}
Let $\mathcal{B}$ a branch. We will show that the twisted multiplication 
 $$(A_1\otimes A_2)\cdot(B_1\otimes B_2)=t_B(A_1,A_2,B_1,B_2)](A_1B_1\otimes A_2B_2)$$
 is associative. Let $A_1,A_2,B_1,B_2,C_1,C_2\in X_1(T)$, the result of 
 $$((A_1\otimes A_2)(B_1\otimes B_2))(C_1\otimes C_2)$$
 can be $A_1B_1C_1\otimes A_2B_2C_2$ or $0$ (it could be the same). We just have to demonstrate that $t_\mathcal{B}$ returns zero in $((A_1\otimes A_2)(B_1\otimes B_2))(C_1\otimes C_2)$ if and only if $t_B$ returns zero in $(A_1\otimes A_2)((B_1\otimes B_2)(C_1\otimes C_2))$.
 
 
 
 \[
\begin{array}{|c|c|}
\hline
\text{Conditions on } ((\cdot)(\cdot))(\cdot)&\text{Conditions on } (\cdot)((\cdot)(\cdot))\\
\text{ such that } t_\mathcal{B} \text{ returns } 0
& 
 \text{ such that } t_\mathcal{B} \text{ returns } 0\\
\hline
\begin{array}{l}
(1)\; a_2 \text{ and } b_1 \neq \emptyset \\
(2)\; a_2 b_1 = \emptyset \text{ and } a_1 b_2 = 0 \\
(3)\; c_1 \text{ and } a_2 b_2 \neq \emptyset \\
(4)\; a_2 b_2 c_1 = \emptyset \text{ and } a_1 b_1 c_2 = 0
\end{array}
&
\begin{array}{l}
(5)\; b_2 \text{ and } c_1 \neq \emptyset \\
(6)\; b_2 c_1 = \emptyset \text{ and } b_1 c_2 = 0 \\
(7)\; b_1 c_1 \text{ and } a_2 \neq \emptyset \\
(8)\; a_2 b_1 c_1 = \emptyset \text{ and } a_1 b_2 c_2 = 0
\end{array}
\\
\hline
\end{array}
\]
 We have the following implications : 
 
 $(1)\implies (7)$ : if $b_1\neq \emptyset$ then $b_1c_1\neq \emptyset$.
 
 $(2)\implies (5)\text{ or }(8)$ : $a_1b_2=0$ implies $b_2\neq \emptyset$ if $c_1\neq \emptyset$ too, $(5)$ is verified otherwise $c_1=\emptyset$. In this case the equality $a_2b_1=\emptyset$ implies $a_2b_1c_1=\emptyset$ and $a_1b_2=0$ implies $a_1b_2c_2=0$ so $(8)$ is verified.
 
 $(3)\implies (5) \text{ or } (7)$ : if $b_2\neq \emptyset$, $(5)$ is verified, otherwise $a_2\neq \emptyset$ because $a_2=a_2b_2\neq \emptyset$. Moreover $b_1c_1\neq \emptyset$ because $c_1$ is, so $(7)$ is verified.
 
 $(4) \implies (6) \text{ if } b_1c_2=0 \text{ otherwise by the \cref{nulproduct} }a_1b_1=0$ and the product is zero in any case.

In a similar way, we can show that $(5)\implies (3)$, $(6)\implies (1)\text{ or }(4)$, $(7)\implies (1) \text{ or } (3)$, $(8) \implies (2)$ or the product is zero.
 This shows the associativity for each branch and then the associativity for the whole product by  multiplication.
\begin{figure}[h]
    \centering
 \begin{tikzpicture}
    \node (1) {(1)};
    \node (2) [below of=1] {(2)};
    \node (3) [below of=2] {(3)};
    \node (4) [below of=3] {(4)};
    
    \node (5) [right of=1, xshift=4cm] {(5)};
    \node (6) [below of=5] {(6)};
    \node (7) [below of=6] {(7)};
    \node (8) [below of=7] {(8)};
    
    \node (P) [below right of=4, xshift=2cm] {zero product};

    \draw[->,bend left=4] (1) to (7);
    
    \draw[->] (2) to (5);
    \draw[->,bend left=4] (2) to (8);
    
    \draw[->,bend left=4] (3) to (5);
    \draw[->,bend left=4] (3) to (7);
    
    \draw[->,bend left=4] (4) to (6);
    \draw[->] (4) to (P);

    \draw[->,bend left=4,dashed] (5) to (3);
    
    \draw[->,dashed] (6) to (1);
    \draw[->,bend left=4,dashed] (6) to (4);
    
    \draw[->,bend left=4,dashed] (7) to (1);
    \draw[->,bend left=4,dashed] (7) to (3);
    
    \draw[->,bend left=4,dashed] (8) to (2);
    \draw[->,dashed] (8) to (P);

\end{tikzpicture}
\caption{Diagram of implications}
\end{figure}
 $\qedhere$
\end{proof}
\begin{thm}\label{thmbialgebraLRT}
    Let $T$ be a tree and $\hall(T)$ the associated Hall algebra and coalgebra with the canonical multiplication and comultiplication. Then $\hall(T)$ is a twisted bialgebra, i.e.,
    $$\Delta(AB)=\Delta(A)\cdot\Delta(B),$$
    where the twisted product is given by \ref{deftwistedproductLRT}.
\end{thm}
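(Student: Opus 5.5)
The approach is to make both sides of $\Delta(AB)=\Delta(A)\cdot\Delta(B)$ completely explicit as sums of basis tensors with coefficients in $\{0,1\}$, and then compare them branch by branch.

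\textbf{Explicit formulas.} First I will unwind \cref{producthallalgebra} and \cref{coproducthallalgebra} for $X(T)$. An element of $X_2$ is a pair $V(H)\supseteq L_1\supseteq\emptyset$. Its faces are $d_0=L_1$ (the lower part) and $d_2=H\setminus L_1$ (the upper part). Hence
$$\Delta(H)=\sum_{L\subseteq H \text{ lower}}(H\setminus L)\otimes L,$$
and for $A,B\in X_1(T)$ we get $AB=A\sqcup B$ exactly when $A\sqcup B$ is admissible with $B$ a lower subforest of it, and $AB=0$ otherwise.

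I read the twist in \cref{deftwistedproductLRT} as the product $t=\prod_{\mathcal{B}} t_{\mathcal{B}}$ over all branches, as in the associativity proof. The key reduction is that every condition involved is local on branches. A subset is a lower subforest iff its projection on every branch is lower. Likewise $AB\neq 0$ iff $\mathcal{B}(A)\mathcal{B}(B)\neq 0$ for every branch $\mathcal{B}$, since on a branch a nonzero product means the two pieces are adjacent in the correct order. I would state this as a small lemma first.

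\textbf{Expanding both sides.} The right-hand side is
$$\sum_{A_2\subseteq A,\;B_2\subseteq B \text{ lower}} t(A_1,A_2,B_1,B_2)\,(A_1B_1\otimes A_2B_2),$$
where $A_1=A\setminus A_2$ and $B_1=B\setminus B_2$.

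\emph{Case $AB\neq 0$.} Every lower $L\subseteq A\sqcup B$ splits uniquely as $L=(L\cap A)\sqcup(L\cap B)$. Set $A_2=L\cap A$ and $B_2=L\cap B$. Each of these is lower in $A$, resp. $B$, because these are projections of a lower set. This gives an injection from the terms of $\Delta(AB)$ to the index set on the right. It then suffices to show that, for a pair of cuts $(A_2,B_2)$, the following three statements are equivalent:
\begin{itemize}
\item $A_2\sqcup B_2$ is lower in $A\sqcup B$;
\item $A_1B_1\neq 0$ and $A_2B_2\neq 0$, with $A_1B_1=(A\sqcup B)\setminus L$ and $A_2B_2=L$;
\item $t=1$.
\end{itemize}
By the locality lemma, this reduces to one branch, where $a$ sits directly above the adjacent segment $b$ (a chain).

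On that branch there are three cases.
\begin{itemize}
\item If $a_2\neq\emptyset$ and $b_1\neq\emptyset$, then part of $a$ lies in $L$ while a lower part of $b$ does not, so $L$ is not lower. Correspondingly $t_{\mathcal{B}}=0$.
\item If $a_2=b_1=\emptyset$, the cut is at the interface and $t_{\mathcal{B}}=1$ iff $a_1b_2=ab\neq 0$, which holds.
\item If exactly one of $a_2$, $b_1$ is nonempty, the cut falls inside $a$ or inside $b$. Adjacency of $a_2$ with $b$ (resp. of $a$ with $b_1$) is then inherited from that of $a$ with $b$, so both factors are nonzero and $t_{\mathcal{B}}=1$.
\end{itemize}

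\emph{Case $AB=0$.} I must show every term on the right vanishes. Pick a branch with $ab=0$, where $a,b$ are the projections of $A,B$ on it. If $a_2=b_1=\emptyset$, then $t_{\mathcal{B}}=0$ since $a_1b_2=ab=0$. If $a_2$ and $b_1$ are both nonempty, $t_{\mathcal{B}}=0$ directly. In the remaining cases, the obstruction to $ab$ (wrong order, or a gap between the pieces) persists in $a_2b$ or in $ab_1$, so one of $A_1B_1$, $A_2B_2$ is $0$. I would use \cref{nulproduct}-style reasoning on the chain here.

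\textbf{Main obstacle.} I expect this last case analysis to be the hardest step. It requires care when projections are empty on some branches but not on others, and with the planar and forest structure, to make sure that a failure of the product on a single branch really kills every term. I would handle it by proving once, on a single chain, a complete table of when $xy\neq 0$ for segments $x,y$, and then reading off each case from that table.
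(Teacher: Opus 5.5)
Your proposal is correct and is essentially the paper's own proof. You split on whether $AB=0$ and pass between a cut $L$ of $A\sqcup B$ and the pair $(L\cap A,\,L\cap B)$. On each branch you check that $t_{\mathcal{B}}$ vanishes exactly when $a_2,b_1\neq\emptyset$, and, when $AB=0$, that the mixed case forces $a_2b=0$ or $ab_1=0$. Your locality lemma and the bookkeeping of multiplicities only make explicit what the paper leaves implicit.

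One correction: your three bulleted statements are not all equivalent, because the second does not imply the first. Take $A_2=A$ and $B_2=\emptyset$, with $A$ and $B$ both nonempty on some branch. Then $A_1B_1=B\neq 0$ and $A_2B_2=A\neq 0$, yet $A$ is not lower in $A\sqcup B$; this is precisely the term the twist must kill. So the target should be ``first $\Longleftrightarrow$ third, and first $\Rightarrow$ second'', which is exactly what your branch analysis proves.
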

\begin{proof}
    Let $A,B\in X_1(T)$ define $C:=AB \in X_1(T)$.

    If $C=0$, then there exists a branch $\mathcal{B}$ such that $\mathcal{B}(A)\mathcal{B}(B)=0$. Now, take $A_1\otimes A_2$ and $B_1\otimes B_2$ elements respectively of $\Delta(A)$ and $\Delta(B)$, and investigate the value of the test function on $\mathcal{B}$. We denote their projection onto $\mathcal{B}$ by lowercase letters.
    If $a_2$ and $b_1\neq \emptyset$ then it is zero. If both are empty, then because $ab=0$ the test is zero too. Otherwise assume without losing of generality that $a_2$ is nonempty, because $ab=0$ we have $a_2b=a_2b_2=0$ and so $A_1B_1\otimes A_2B_2=0$.
    
    So the equality $\Delta(AB)=\Delta(A)\cdot \Delta(B)$ holds.

    If $C\neq 0$, let $C_1\otimes C_2$ be a term of $\Delta(C)$. By definition, on each branch the separation between $C_1$ and $C_2$ is in $A$ or $B$ or neither, each case excluding the others. This cut defined on each branch $\mathcal{B}$ a cut for $A$ and $B$ and at least one of them is trivial (of the form $X\otimes \emptyset$ or $\emptyset \otimes X$). It defines a cut on $A$ given by $A_1\otimes A_2$ and for $B$ by $B_1\otimes B_2$. The multiplication is 
    $$(A_1\otimes A_2)\cdot (B_1\otimes B_2)=A_1B_1\otimes A_2B_2=C_1\otimes C_2$$
    because the twist does not kill it by definition. It shows that every term of $\Delta(C)$ is included in $\Delta(A)\cdot \Delta(B)$.
    
    Conversly, consider $A_1B_1\otimes A_2B_2$ a non-zero term of $\Delta(A)\cdot \Delta(B)$. By definition of the twist, on each branch $\mathcal{B}$, we have $\mathcal{B}(A_2)=\emptyset$ or $\mathcal{B}(B_1)=\emptyset$. It defines a place to cut between $A$ and $B$ on each branch and $A_1B_1\otimes A_2B_2$ defines a cut for $C$.
    $\qedhere$
\end{proof}
\subsection{Primitive elements and presentation}
Normally we use the twisted bialgebra structure to give a presentation of the Hall algebra. Here the presentation is simple enough to be given directly. Before that we just give the following result about primitive elements. 
\begin{prop}\label{propprimitivesLRT}
    The set of vertices is a basis of the vector space $Prim(T)$.
\end{prop}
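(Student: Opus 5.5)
Each single vertex $\{v\}$ is an admissible subforest, so it lies in $X_1(T)$. The plan is to show that these elements are primitive and that they span $Prim(T)$. Since they are distinct elements of $X_1(T)$, which is an orthonormal basis, they are automatically linearly independent.

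\emph{Primitivity of vertices.} I will read off $\Delta(\{v\})$ from \cref{coproducthallalgebra}. A term $a\otimes b$ with nonzero coefficient corresponds to an element of $X_2(T)$ with $d_1=\{v\}$. Such an element is a cut $\{v\}\supseteq L_1\supseteq\emptyset$, where $L_1$ is a lower subforest of $\{v\}$. The only choices are $L_1=\emptyset$ and $L_1=\{v\}$, giving the terms $1\otimes\{v\}$ and $\{v\}\otimes 1$, each with coefficient one. Hence $\Delta(\{v\})=1\otimes\{v\}+\{v\}\otimes 1$.

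\emph{Spanning.} Grade $\hall(T)$ by the number of vertices; products and coproducts preserve this, and every degree is finite dimensional. By the earlier proposition characterising primitives,
$$Prim(T)_i=(\hall(T)_{<i}\times \hall(T)_{<i})^{\perp}\cap\hall(T)_i .$$
So it suffices to show that for $i\geq 2$, every admissible subforest $H$ with $i$ vertices is a product $AB$ with $A,B$ nonempty admissible subforests. Then $H\in \hall(T)_{<i}\times\hall(T)_{<i}$, and since the basis $X_1(T)$ is orthonormal, $(\hall(T)_{<i}\times\hall(T)_{<i})^\perp\cap\hall(T)_i=0$. In degree $1$ the basis consists exactly of the single vertices, which gives the statement.

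\emph{Decomposition.} Take $H$ with at least two vertices and choose a cut $V(H)\supseteq L_1\supseteq\emptyset$ with $L_1$ and $V(H)\setminus L_1$ both nonempty. For instance, let $L_1$ be a single minimal vertex of one component of $H$. This is a lower subforest of $H$ because a minimal vertex has nothing below it inside $H$. This triple is an element of $X_2(T)$ with $d_1=H$, so $H$ appears with coefficient $\geq 1$ in the product of its two faces $d_2$ and $d_0$ (in the order fixed by \cref{producthallalgebra}). By the cited proposition of \cite{bergner20252segalsetscutsrooted}, that product is $\epsilon z$ with $\epsilon\in\{0,1\}$, so it must equal $H$ exactly. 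Both faces have strictly fewer vertices than $H$.

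\emph{Main obstacle.} The delicate step is checking that both faces of this cut are genuinely admissible subforests of $T$, so that they are basis elements of lower degree. This is guaranteed because the simplicial structure of $X(T)$ is well defined: faces of an element of $X_2(T)$ lie in $X_1(T)$. With that granted, the argument is purely formal.
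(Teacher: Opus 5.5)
Your proof is correct, but it runs dual to the paper's. The paper stays on the coproduct side. It writes a primitive $P=\sum_i c_iH_i$ in the basis $X_1(T)$ and uses that an admissible subforest is determined by its vertex set. Hence a tensor $a\otimes b$ occurring in $\Delta(H_i)$ determines $H_i$ through $V(a)\sqcup V(b)$, the coproducts of distinct basis elements cannot cancel, and every $H_i$ must itself be primitive. It then simply asserts that the only primitive basis elements are single vertices.

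You go through the adjunction instead, via $Prim(T)_i=(\hall(T)_{<i}\times\hall(T)_{<i})^{\perp}\cap\hall(T)_i$. You show that each basis element $H$ of degree $i\geq 2$ equals $(H\setminus\{r\})\cdot\{r\}$ for a minimal vertex $r$, so decomposables span $\hall(T)_i$ and positive-definiteness kills the complement.

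The paper's route uses neither the product nor the scalar product. Yours needs positive-definiteness and the fact that a product of two basis elements is $0$ or a single basis element. In exchange, it gives generation by vertices (\cref{remarkgenerate}) for free by induction on degree. Your minimal-vertex cut also supplies the step the paper leaves implicit: an admissible subforest with at least two vertices has a nontrivial term in its coproduct.

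Your ``main obstacle'' can also be checked directly. Suppose $H=L\setminus L'$ with $L'\subseteq L$ lower subtrees, and $M\subseteq V(H)$ is closed downward in $V(H)$ for the order of $T$. Then $L'\cup M$ is a lower subtree, so $M=(L'\cup M)\setminus L'$ and $V(H)\setminus M=L\setminus(L'\cup M)$ are both admissible.
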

\begin{proof}
    All vertex are in $X_1(T)$ and clearly primitive. Let's take $P\in \hall(T)$ a primitive element. We have $P=\sum_ic_iH_i$ with $H_i\in X_1(T)$ all different. Then 
    $$\Delta(P)=\sum_ic_i\Delta(H_i).$$
    Because all the $H_i$ are different and by the fact that an element of $X_1(G)$ is entirely determined by its graduation, every $H_i$ must be primitive too. To conclude, in $X_1(T)$ only vertices are primitives. The linearly independence is clear.
    $\qedhere$
\end{proof}
\begin{rmk}\label{remarkgenerate}
    The algebra $\hall(T)$ is $\N$-graded by the number of vertices so the set of primitive elements generates $\hall(T)$ as an algebra (\cref{corgenerate}). We could also see it here more easily just by noticing that an element of $X_1(T)$ is given by the product of its vertices from the top the bottom.
\end{rmk}
Now we give a presentation of $\hall(T)$.
\begin{thm}\label{thmpresentationLRT}
    We have the following isomorphism :
    $$\hall(T)\simeq \Q[x_i|~i\text{ a vertex of }T]/(R),$$
    where $R$ is the set of relations given by :
    \begin{itemize}
        \item $x_ix_j=x_jx_i$ if $i$ and $j$ do not belong to the same branch.
        \item $x_ix_j=0$ if $i$ is in a common branch with $j$ and is not the vertex just above $j$.
    \end{itemize}
\end{thm}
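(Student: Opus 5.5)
We define $\Phi:\Q\langle x_i\rangle\to\hall(T)$ on the free associative algebra by $x_i\mapsto i$, where the vertex $i$ is viewed as a one-vertex admissible subforest. By \cref{remarkgenerate}, $\Phi$ is surjective. The plan is to show that $\Phi$ kills $R$, and then that the quotient by $(R)$ has dimension at most $|X_1(T)|$. Since the admissible subforests are a basis of $\hall(T)$, this forces the induced surjection to be an isomorphism.

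To see that the relations hold, we use the description of the product on $X_1(T)$: $ab$ is the admissible subforest $a\cup b$ when the 2-simplex exists, and $0$ otherwise. For two single vertices, a 2-simplex with $d_2=i$, $d_0=j$ means that $\{j\}$ is a lower subforest of $\{i,j\}$ and $\{i,j\}$ is admissible. Suppose $i$ and $j$ lie in no common branch. Then $\{i,j\}$ is admissible, with $\{j\}$ and $\{i\}$ both lower, so $ij=ji=\{i,j\}$. Suppose instead they lie in a common branch. Then $ij\neq 0$ exactly when $i$ is directly above $j$: if there were a vertex between them, $\{i,j\}$ would not be an admissible subforest, and if $i<j$ the lower part would be wrong. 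So both families of relations hold.

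For the spanning argument, we show that every monomial $x_{i_1}\cdots x_{i_k}$ is either $0$ modulo $(R)$ or equal modulo $(R)$ to a normal form determined by its support. Here is the key observation. If the monomial is nonzero modulo $R$, then every pair $i_a,i_b$ ($a<b$) of comparable vertices satisfies the following: $i_a$ is strictly above $i_b$, and the chain between them consists of letters that appear in the monomial in decreasing order. We obtain this by an induction that uses commutation moves on incomparable letters to bring $i_a$ and $i_b$ adjacent or to expose a killing relation. For the same reason, no letter repeats, since $x_ix_i=0$ as $i$ is not just above itself.

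Hence a surviving monomial has a support $S$ that is a set of vertices closed under intervals along branches. Such an $S$ is exactly an admissible subforest. Moreover, the word is a linear extension of the reverse partial order on $S$, and any two linear extensions of a poset are connected by adjacent swaps of incomparable elements, which are allowed by the commutation relations. So the quotient is spanned by one normal-form monomial per admissible subforest (plus $1$ for $\emptyset$), giving $\dim\leq |X_1(T)|$. Together with surjectivity, this yields the isomorphism.

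The main obstacle is this middle step: proving rigorously that a monomial violating the order or convexity is zero modulo $(R)$. Comparable letters need not be adjacent, and moving them together requires commuting past letters that may themselves be comparable with one of them. We would argue by induction on the distance in the word between a "bad" comparable pair chosen with minimal distance. Every letter strictly between them is then either incomparable to both, and can be commuted out, or comparable to one of them, which produces a closer bad pair or else a correctly ordered pair. The remaining case needs care. If all intermediate letters sit correctly in the chain, then adjacent-in-word comparable letters are forced by $R$ to be tree-adjacent, and a counting argument shows that the chain from $i_a$ to $i_b$ must be fully present, contradicting badness.
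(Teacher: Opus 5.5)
Your argument is correct in substance, but it reaches injectivity by a different route from the paper.

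\textbf{The paper's route.} It proves a single vanishing lemma: $x_iXx_j=0$ whenever $j$ lies above $i$ on a branch. This is your wrong-order case. It then works directly with relations. Since a product of basis elements of $\hall(T)$ is $0$ or a basis element, it reduces to two monomials with the same image. It aligns them from the left by commutations, and at the first discrepancy finds a comparable pair in opposite orders, so one word dies by the lemma.

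\textbf{Your route.} You bound $\dim\Q\langle x_i\rangle/(R)$ by $|X_1(T)|$ using normal forms. This costs you a second vanishing statement: a correctly ordered comparable pair whose open tree chain is not fully present between them gives $0$. It also needs the connectivity of linear extensions under swaps of incomparable elements. In return it gives an explicit basis of the presented algebra. It also explicitly kills monomials whose image is $0$ only because their support is not convex, such as $x_bx_cx_r$ with $b$ a grandchild of $r$ and $c$ another child of $r$. The paper's reduction to pairs of equal products does not treat that case.

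\textbf{A point to tighten in your key step.} Your case split omits intermediate letters lying above $v=i_b$ on a branch avoiding $u=i_a$. Such letters are comparable to $v$ and correctly ordered, yet they are not on the chain. They are harmless because they are incomparable to $u$. A clean formulation is as follows. Take a bad pair at minimal distance. Any intermediate letter $y$ comparable to $u$ either forms a closer bad pair with $u$ or with $v$, or satisfies $v<y<u$. In the latter case, minimality places the whole open chain from $v$ to $u$ between the two positions, contradicting badness. Hence $u$ is incomparable to every intermediate letter. Sliding $x_u$ rightwards until it meets $x_v$ then creates a factor killed by a defining relation, so no counting argument is needed.
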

\begin{rmk}
    Said differently, $x_i$ and $x_j$ commute if $i$ and $j$ are not in the same branch otherwise $x_ix_j$ is not zero only if $i$ is the vertex just above $j$ in the tree.
\end{rmk}
\begin{proof}
We begin this proof by showing the following result :
\begin{lem}
   Assume $j$ is above $i$ in the same branch, then for any $X=\prod_k x_k$, we have $x_iXx_j=0.$
\end{lem}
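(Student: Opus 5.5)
The plan is to read a nonzero iterated product in $\hall(T)$ as a layering of an admissible subforest, and then to show that $i$ below $j$ on a common branch cannot be placed in the top layer while $j$ sits in the bottom one. By \cref{producthallalgebra}, in a product $AB$ the element $A$ is the upper part and $B$ the lower part. Concretely, $AB=H$ exactly when $H\in X_1(T)$ is an admissible subforest, $V(H)=A\sqcup B$, and $B$ is a lower subforest of $H$. By the previous proposition, every product of basis elements is $0$ or a single basis element. So it suffices to assume $x_iXx_j=H\neq 0$ with $X=x_{k_1}\cdots x_{k_m}$ and derive a contradiction.

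\textbf{Step 1 (iterated products as simplices).} Using associativity, which comes from the 2-Segal property, and the iterated 2-Segal bijections of \cite{DyckerhoffKapranov2019}, the coefficient of $H$ in a product $y_1y_2\cdots y_n$ of elements of $X_1(T)$ counts the $n$-simplices $(V(H)\supseteq L_1\supseteq\dots\supseteq L_n=\emptyset)$ whose successive layers $L_{r-1}\setminus L_r$ are $y_1,\dots,y_n$, read from the top. This can also be checked directly by induction on $n$: write $y_1(y_2\cdots y_n)$ and note that $y_2\cdots y_n$ is itself $0$ or a single lower subforest $L_1$ of $H$. Applied to $x_iXx_j=H$, we get a chain of lower subforests of $H$ with $\{i\}=V(H)\setminus L_1$ and $L_{n-1}=\{j\}$. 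In particular $\{j\}$ is a lower subforest of $H$ and $i\notin L_{n-1}$.

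\textbf{Step 2 (convexity of admissible subforests).} I will show that $i$ and $j$ lie in the same connected component of $H$, with $i$ below $j$. Write $V(H)=L\setminus L'$ with $L'\subseteq L$ lower subforests of $T$, and let $k$ be a vertex on the branch with $i\le k\le j$. Since $j\in L$ and $L$ is downward closed, $k\in L$. If $k$ were in $L'$, then $i\le k$ would give $i\in L'$, contradicting $i\in H$. Hence the whole segment from $i$ to $j$ lies in $H$, so $i$ and $j$ are in one component $T_\alpha$ of $H$ and $i<j$ in it.

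\textbf{Step 3 (conclusion).} Any lower subforest of $H$ containing $j$ must contain every vertex of its component below $j$, and so must contain $i$. This contradicts $L_{n-1}=\{j\}$ with $i\neq j$. Hence $x_iXx_j=0$. The same argument covers $X=1$, and it also covers the case where $X$ contains repeated vertices: a repeated vertex forces the product to be $0$, since the layers of a simplex are disjoint.

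The main obstacle is Step 1, namely making rigorous that a nonzero iterated product corresponds to a full layering with the prescribed singleton layers at the top and bottom. I would handle it by the short induction sketched above rather than by appealing to the general 2-Segal machinery. The key point for the induction is that in the tree case all structure constants are $0$ or $1$, so each partial product is a single forest.
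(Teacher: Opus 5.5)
There is a genuine gap: you have proved the statement in the wrong algebra. Steps 1--3 correctly show that the product of the vertices $i\cdot X\cdot j$ vanishes in $\hall(T)$; the convexity argument for admissible subforests is fine. But in the proof of \cref{thmpresentationLRT}, the $x_i$ are the formal generators of the algebra presented by the relations $R$, and the theorem's map sends $x_i\mapsto i$. The lemma is invoked in the injectivity step, to conclude that certain monomials are zero \emph{in the quotient}, i.e.\ that they lie in the two-sided ideal generated by $R$. Knowing that their image in $\hall(T)$ is zero says nothing about this. Passing from ``zero in $\hall(T)$'' to ``zero modulo $R$'' is exactly the injectivity being proved, so your version, used at that point, would make the argument circular. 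Your layering argument only re-proves, in a special case, the easy well-definedness direction.

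What is needed is a derivation from the relations $R$ alone. One way is induction on the length of $X$:
\begin{itemize}
\item If $X$ is empty, then $x_ix_j\in R$: the vertex $i$ lies below $j$ on a common branch, so it is not the vertex just above $j$.
\item Otherwise write $X=X'x_k$. If $k$ and $j$ lie on no common branch, commute to get $x_iX'x_jx_k$, and induction applies to $x_iX'x_j$.
\item If $k$ and $j$ lie on a common branch and $k$ is not the vertex just above $j$ (this includes $k=j$), then $x_kx_j\in R$.
\item In the remaining case $k$ is just above $j$, hence above $i$ on a common branch, and induction gives $x_iX'x_k=0$.
\end{itemize}
This is the paper's argument of pushing $x_j$ leftward through $X$. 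Because it uses only $R$, it gives the statement in the quotient, and hence also in $\hall(T)$.
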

\begin{proof}
Try to pass $x_j$ through $X$ with the rule of commutation between two elements not in the same branch. If we cannot it is because there is $x_k$ with $k$ in a same branch as $j$. If $k$ is below $j$, then by definition $x_kx_j=0$, otherwise $k$ is above $j$ and now try to pass $x_k$ through the rest of $X$. Repeat this operation until the end of $X$. At the end either the product is $0$ or there is a multiplication $x_ix_k$ with $k$ above $j$ and so in the same branch as $i$. In any case $x_ix_k$ is zero. This proves that the product is zero. 
$\qedhere$
\end{proof}
   The map that sends $x_i$ to $i$ is clearly well defined and surjective via \cref{remarkgenerate}.
   Assume that $\sum_i a_i x_{i_1}...x_{i_{k_i}}=0$. By the fact that $\hall(T)$ is a semigroup on its basis, if this sum is zero it must imply that there is at least two products $x_{i_1}...x_{i_{k_i}}$ and $x_{j_1}...x_{j_{k_j}}$ that are equal. By the graduation we know that $k_i=k_j$ and that the terms in the two products are permutation of each other. So
   $$x_{i_1}...x_{i_k}=x_{j_1}...x_{j_k}.$$
   Assume that we cannot pass from one to the other by relations given above and that relations have been used to maximise the correspondence of the terms from left to right. Denote by $l$ the first integer where $x_{i_l}\neq x_{j_l}$. So the equality can be rewritten as
   $$Xx_{i_l}Y=Xx_{j_l}Y_1x_{i_l}Y_2.$$
   If $x_{i_l}$ commutes with $x_{j_l}Y_1$ it is a contradiction with the hypothesis of maximal correspondence. So there is an element $x$ in $x_{j_l}Y_1$ which does not commute with $x_{i_l}$ so either $xx_{i_l}=0$ or $x_{i_l}x=0$. The equality can be rewritten as
   $$Xx_{i_l}Y_1'xY_2'=XY_{11}xY_{12}x_{i_l}Y_2,$$
   with $Y_{11}xY_{12}=x_{j_l}Y_1$. Apply the previous lemma on $x_{i_l}Y_1'x$ and $xY_{12}x_{i_l}$ show that one of the two is zero.
    $\qedhere$
\end{proof}

\section{Undirected graphs}\label{sectionNOL}
In this section we study the Hall algebra associated to an undirected graph. The main results are 
\cref{thmbialgebraNOL} (twisted bialgebra), \cref{propprimitivesNOL} (description of primitives), \cref{thmpresentationNOL} (presentation) and \cref{thmpolyhedralproductNOL} (Hall algebra as the cohomology of a space).

An undirected graph $G$ consists of a set of vertices $V(G)$ together with a set of edges $E(G)$, where each edge is an unordered pair of vertices. Multiple edges and loops are allowed. The vertices are labelled with integers from $1$ to $n:=|V(G)|$ and edges between two vertices are also labelled. A subgraph $K$ of $G$ is given by a subset $V(K)\subseteq V(G)$ and a susbet $E(K)\subseteq E(G)$ such that the extremities of every edges in $E(K)$ belong to $V(K)$. An induced subgraph $A$ of a graph $K$ is a subgraph such that every edge between the vertices of $V(A)$ that is in $E(K)$ is in $E(A)$. Notice that an induced subgraph is only a data on vertices.
\subsection{2-Segal sets and Hall algebras of undirected graphs}
\begin{definition}
    If $A$ is an induced subgraph of $H$, we will use the notation $A\subseteq H$. In this case we will denote by $H\backslash A$ the induced subgraph of $H$ on the vertices that are not in $A$.
\end{definition}
Following \cite{bergner20172segalsetswaldhausenconstruction} we introce a 2-Segal set associated to an undirected graph.
\begin{definition}{\cite{bergner20172segalsetswaldhausenconstruction}}
    To an undirected labelled graph $G$ we associate the following simplicial set :
    \begin{itemize}
        \item $X_0(G)$ has a single element denoted by $\emptyset$.\\
        \item $X_1(G)$ is the set of all subgraphs of $G$.\\
        \item $X_n(G)=\{(S_1\subseteq...\subseteq S_n=H)~|~H\in X_1(G)\}$.
        \smallbreak
        The face map $d_i:X_n(G) \rightarrow X_{n-1}(G)$ is given by :
        \begin{itemize}
            \item if $i=0$, $d_0(S_1\subseteq...\subseteq S_n\subseteq H)=(S_2\backslash S_1\subseteq...\subseteq S_n\backslash S_1 \subseteq H\backslash S_1)$
            \item for $i\geq 1$, $d_i$ forgets the $i^{th}$ term of the filtration.
        \end{itemize}
        \smallbreak
        The map $s_i:X_n(G)\rightarrow X_{n+1}(G)$ is just given by repeating the $i^{th}$ term.
    \end{itemize}
     \end{definition}
         We will use the following graph to illustrate various properties during this section.

     \begin{figure}[ht] \centering\begin{tikzpicture}
    \node[draw,circle, scale=1, fill=green](1) at (0,0){1};
    \node[draw,circle, scale=1, fill=green](2) at (1,-1){2};
    \node[draw,circle, scale=1, fill=orange](3) at (2,0){3};
    \node[draw,circle, scale=1, fill=orange](4) at (3,-1){4};
    \node[draw,circle, scale=1, fill=orange](5) at (4,0){5};
    \node[draw,circle, scale=1](6) at (3,1){6};
    \draw[color=green,line width=2pt] (1)--(2);
    \draw (1)--(3);
    \draw (2)--(3);
    \draw[color=orange,line width=2pt] (3)--(5);
    \draw[color=orange,line width=2pt] (3)--(4);
    \draw (4)--(5);
    \draw (3)--(6);
    \draw (5)--(6);
 \end{tikzpicture} 
 \caption{A graph $G$ with two subgraphs : $H$ in green and $K$ in orange.} \label{exmulti}
\end{figure}
     \begin{ex}
         For the graph $G$ of \cref{exmulti} :
         \begin{itemize}
             \item $(H\subseteq G)\in X_2(G)$
             \item $(\emptyset \subseteq H\subseteq H)\in X_3(G)$
             \item $(K\subseteq G)\notin X_2(G)$ because $K$ is not induced in $G$
             \item $(3\subseteq K)\in X_2(G)$ (where $3$ denotes the graph with only the vertex $3$)
         \end{itemize}
         
     \end{ex}
    \begin{prop}{\cite{bergner20172segalsetswaldhausenconstruction}}
        This simplicial set is 2-Segal.
    \end{prop}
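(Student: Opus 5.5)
We need to prove that $X(G)$ is 2-Segal: for $n\ge 3$ and $0\le i<j\le n$, the map
$$X_n\to X_{\{0,\dots,i,j,\dots,n\}}\times_{X_{\{i,j\}}}X_{\{i,\dots,j\}}$$
is a bijection.

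The plan is to first make the simplicial structure explicit in terms of intervals. An $n$-simplex $(S_1\subseteq\dots\subseteq S_n=H)$ is a chain of induced subgraphs of $H$. Set $S_0=\emptyset$ and use the convention that vertex $k$ of the simplex corresponds to $S_k$. The key observation, checked directly from the definitions of $d_0$ and of $d_i$ for $i\ge1$, is that for any $0\le a<b\le n$ the edge $\{a,b\}$ of the simplex is the subgraph $S_b\backslash S_a$. More generally, the face spanned by $a_0<a_1<\dots<a_m$ is the chain $(S_{a_1}\backslash S_{a_0}\subseteq\dots\subseteq S_{a_m}\backslash S_{a_0})$. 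Here one uses that if $A\subseteq B\subseteq H$ are induced subgraphs, then $B\backslash A$ is induced in $H\backslash A$, and that $(C\backslash A)\backslash(B\backslash A)=C\backslash B$. This last identity holds because all the graphs involved are induced subgraphs of $H$, so each is determined by its vertex set inside $H$. With it, iterating $d_0$ and the forgetful faces gives the formula by induction on the number of face maps applied.

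Next, we write down the two sides of the fiber product.

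The first face $F_1$, spanned by $\{0,\dots,i,j,\dots,n\}$, is the chain
$$(S_1\subseteq\dots\subseteq S_i\subseteq S_j\subseteq\dots\subseteq S_n=H).$$

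The second face $F_2$, spanned by $\{i,\dots,j\}$, is the chain
$$(S_{i+1}\backslash S_i\subseteq\dots\subseteq S_j\backslash S_i).$$

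Both have the common edge $S_j\backslash S_i$.

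For injectivity, note that $F_1$ determines $H$, $S_i$ and $S_j$, while $F_2$ determines each $S_k\backslash S_i$ for $i<k<j$. Then $S_k$ is recovered as the induced subgraph of $H$ on $V(S_i)\sqcup V(S_k\backslash S_i)$.

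For surjectivity, start from a compatible pair: a chain $T_1\subseteq\dots\subseteq T_{n'}=H$ on the first face, and a chain $U_1\subseteq\dots\subseteq U_{j-i}$ on the second face with $U_{j-i}=T_{\text{pos}(j)}\backslash T_{\text{pos}(i)}$. Define $S_k$ as the induced subgraph of $H$ on $V(T_{\text{pos}(i)})\cup V(U_{k-i})$. Three checks are needed:
\begin{itemize}
\item these $S_k$ form an increasing chain of induced subgraphs of $H$ sitting between $S_i$ and $S_j$;
\item $S_k\backslash S_i=U_{k-i}$, which uses that $U_{k-i}$ is induced in $S_j\backslash S_i$, itself induced in $H$;
\item the faces of the resulting $n$-simplex are the given pair.
\end{itemize}

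The edge cases $i=0$ and $j=n$ need care: when $i=0$ the second face is an honest subchain with $S_0=\emptyset$, and when $j=n$ the top term of the second face is $H\backslash S_i$.

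The main obstacle is the bookkeeping in the first step: establishing that the iterated face maps really produce interval subgraphs $S_b\backslash S_a$, and in particular that the "induced" condition is preserved and that quotients compose correctly. Once that identity is in place, both injectivity and surjectivity reduce to the fact that induced subgraphs of $H$ correspond bijectively to vertex subsets of $H$. This makes the whole problem the 2-Segal property of the simplicial set of flags of subsets of a finite set, a Waldhausen-type construction.
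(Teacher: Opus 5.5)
Your proof is correct and is essentially the paper's argument: the paper only cites the undirected statement from \cite{bergner20172segalsetswaldhausenconstruction}, but its proof of the directed analogue does the same thing, rebuilding each missing $S_k$ ($i<k<j$) as the induced subgraph of $S_n$ on $V(S_i)\sqcup V(Q_k)$ and observing that this choice is unique. Your identification of the face on $a_0<\dots<a_m$ with the chain of interval subgraphs $S_{a_l}\backslash S_{a_0}$ simply makes precise the verification the paper leaves as ``easy''.
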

    \begin{definition}
        Let $A,B\in X_1(G)$ two vertices disjoints subgraphs of $G$ we define $E(A,B)$ (or sometimes $E_G(A,B)$ if it is not clear) to be the set of edges between $A$ and $B$ in $E(G)$. If $e \subseteq E(A,B)$, we define $Ae B$ to be the subgraph of $G$ with vertices $v(A)\sqcup v(B)$ and edges $E(A)\sqcup E(B) \sqcup e$.
    \end{definition}
    By the 2-Segal condition, we obtain a natural associative product on $\hall(G)$ the Hall algebra associated to $X(G)$(see \cref{producthallalgebra}).
    \begin{prop}\label{propdefmultiNOL}
        In our context the product (\cref{producthallalgebra}) can be reinterpreted as follow. Let $A,B\in X_1(G)$, then 
       \begin{equation*} A B = \left\{
    \begin{array}{ll}
        \sum_{e \subseteq E(A,B)} Ae B ~~\mbox{ if }A\cap B=\emptyset,\\
        0 ~~\mbox{ otherwise. }
    \end{array}
\right.\end{equation*}
    \end{prop}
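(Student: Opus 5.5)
We unwind \cref{producthallalgebra} directly. By definition, the coefficient of $h\in X_1(G)$ in $AB$ is
$$c_{A,B}^h=\bigl|\{x\in X_2(G)~|~d_2(x)=A,\ d_0(x)=B,\ d_1(x)=h\}\bigr|,$$
so the task is to identify these $2$-simplices. An element of $X_2(G)$ is a pair $(S_1\subseteq S_2=H)$ with $H\in X_1(G)$ a subgraph and $S_1$ an induced subgraph of $H$.

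First we compute the face maps on $x=(S_1\subseteq H)$. The map $d_2$ forgets the last term of the filtration, so $d_2(x)=S_1$. The map $d_1$ forgets the first term, so $d_1(x)=H$. The map $d_0$ gives $d_0(x)=H\backslash S_1$, the induced subgraph of $H$ on the remaining vertices. (The convention for which face forgets which term is the only point requiring care; the one used here matches the reading that $d_2$ is the sub-object, $d_0$ the quotient and $d_1$ the total object.) Hence
$$c_{A,B}^h=\bigl|\{S_1~|~S_1\subseteq h \text{ induced},\ S_1=A,\ h\backslash A=B\}\bigr|,$$
which equals $1$ if $A$ is an induced subgraph of $h$ with $h\backslash A=B$, and $0$ otherwise.

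Next we characterise the $h$ with $c_{A,B}^h=1$.
\begin{itemize}
\item The vertices satisfy $V(h)=V(A)\sqcup V(B)$ as a disjoint union. So if $V(A)\cap V(B)\neq\emptyset$ no such $h$ exists, and $AB=0$.
\item If $A$ and $B$ are vertex disjoint, then $A$ and $B$ being induced subgraphs of $h$ on complementary vertex sets means $E(h)\cap E_G(V(A),V(A))=E(A)$ and $E(h)\cap E_G(V(B),V(B))=E(B)$.
\item The remaining edges of $h$ go between the two vertex sets, so they form an arbitrary subset $e\subseteq E(A,B)$. Thus $h=AeB$.
\item Conversely, for each $e\subseteq E(A,B)$, the graph $AeB$ is a subgraph of $G$ in which $A$ is induced and $(AeB)\backslash A=B$.
\item Distinct $e$ give distinct subgraphs, since the edges are labelled.
\end{itemize}
Summing over $e$ gives $AB=\sum_{e\subseteq E(A,B)}AeB$ with all coefficients equal to $1$, as claimed.

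The main obstacle is correctly matching the face maps $d_0,d_1,d_2$ with $B$, $h$ and $A$. One should also check that "induced" is taken relative to $h$, not $G$. With that settled, the computation above is a direct bookkeeping of vertices and edges.
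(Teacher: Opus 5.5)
Your proof is correct and follows essentially the same route as the paper: you identify the $2$-simplices $(A\subseteq H)$ with $H\backslash A=B$, observe that these are exactly the graphs $H=AeB$ for $e\subseteq E(A,B)$ (with none existing when $A$ and $B$ share a vertex), and note that each occurs exactly once. You just spell out the face-map bookkeeping and the edge decomposition that the paper leaves as ``easy to see''.
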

    \begin{proof}
    We are searching for elements $(A\subseteq H)$ of $X_2(G)$ such that $H\backslash A=B$. it is easy to see that $H$ is of the form $AeB$ and that such $H$ give a valid element of $X_2(G)$. To conclude notice that all coefficients are $1$.
    $\qedhere$
    \end{proof}
    \begin{ex}
    Let's compute the product $H K$ as represented in \cref{exmulti}.

\begin{align*}
    H K =& \sum_{e\subseteq \{1-3,2-3\}}HeK \\ =&\vcenter{\hbox{\begin{tikzpicture}
    \node[draw,circle, scale=.5, fill=green](1) at (0,0){1};
    \node[draw,circle, scale=.5, fill=green](2) at (0.5,-0.5){2};
    \node[draw,circle, scale=.5, fill=orange](3) at (1,0){3};
    \node[draw,circle, scale=.5, fill=orange](4) at (1.5,-0.5){4};
    \node[draw,circle, scale=.5, fill=orange](5) at (2,0){5};
    \draw (1)--(2);
    \draw (3)--(5);
    \draw (3)--(4);
    \end{tikzpicture}}}
    +
    \vcenter{\hbox{\begin{tikzpicture}
    \node[draw,circle, scale=.5, fill=green](1) at (0,0){1};
    \node[draw,circle, scale=.5, fill=green](2) at (0.5,-0.5){2};
    \node[draw,circle, scale=.5, fill=orange](3) at (1,0){3};
    \node[draw,circle, scale=.5, fill=orange](4) at (1.5,-0.5){4};
    \node[draw,circle, scale=.5, fill=orange](5) at (2,0){5};
    \draw (1)--(2);
    \draw (1)--(3);
    \draw (3)--(5);
    \draw (3)--(4);
    \end{tikzpicture}}}
    +
    \vcenter{\hbox{\begin{tikzpicture}
    \node[draw,circle, scale=.5, fill=green](1) at (0,0){1};
    \node[draw,circle, scale=.5, fill=green](2) at (0.5,-0.5){2};
    \node[draw,circle, scale=.5, fill=orange](3) at (1,0){3};
    \node[draw,circle, scale=.5, fill=orange](4) at (1.5,-0.5){4};
    \node[draw,circle, scale=.5, fill=orange](5) at (2,0){5};
    \draw (1)--(2);
    \draw (2)--(3);
    \draw (3)--(5);
    \draw (3)--(4);
    \end{tikzpicture}}}
+
\vcenter{\hbox{\begin{tikzpicture}
    \node[draw,circle, scale=.5, fill=green](1) at (0,0){1};
    \node[draw,circle, scale=.5, fill=green](2) at (0.5,-0.5){2};
    \node[draw,circle, scale=.5, fill=orange](3) at (1,0){3};
    \node[draw,circle, scale=.5, fill=orange](4) at (1.5,-0.5){4};
    \node[draw,circle, scale=.5, fill=orange](5) at (2,0){5};
    \draw (1)--(2);
    \draw (1)--(3);
    \draw (2)--(3);
    \draw (3)--(5);
    \draw (3)--(4);
    \end{tikzpicture}}}.
\end{align*}

\end{ex}
The 2-Segal condition also gives us a natural coproduct (\cref{coproducthallalgebra}).

\begin{prop}
In our context the coproduct can be reinterpreted as follow. 
    For $A\in X_1(G)$, we have
    $$\Delta(A)=\sum_{K\subseteq A}K\otimes A\backslash K.$$
\end{prop}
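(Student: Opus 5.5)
The plan is to compute the coproduct directly from \cref{coproducthallalgebra}, in the same way the product was computed in \cref{propdefmultiNOL}. By definition,
$$\Delta(A)=\sum_{(K,L)\in X_1(G)\times X_1(G)} c_{K,L}^A\, K\otimes L,$$
where $c_{K,L}^A$ counts the $2$-simplices $x\in X_2(G)$ with $d_2(x)=K$, $d_0(x)=L$ and $d_1(x)=A$. So the task is to describe the fibre of $d_1$ over $A$ and then read off the two other faces.

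First I identify the face maps on $X_2(G)$. An element of $X_2(G)$ is a filtration $(S_1\subseteq H)$ with $S_1$ an induced subgraph of $H$. Under the convention of the definition, $d_1$ forgets $S_1$ and returns $H$, and $d_2$ forgets the top term and returns $S_1$. The map $d_0$ returns $H\backslash S_1$. This is consistent with \cref{propdefmultiNOL}, where $(A\subseteq H)$ contributes $H$ to the product $A\,(H\backslash A)$.

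Next I read off the fibre. The condition $d_1(x)=A$ forces $H=A$, so $x=(K\subseteq A)$ with $K$ an induced subgraph of $A$. Its remaining faces are $d_2(x)=K$ and $d_0(x)=A\backslash K$. Conversely, every induced subgraph $K\subseteq A$ gives an element $(K\subseteq A)\in X_2(G)$ lying over $A$.

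Finally I check the coefficients. Distinct induced subgraphs $K$ give distinct pairs $(K,A\backslash K)$, since $K$ is recovered as the first component. Each pair therefore has coefficient $1$, and summing over all $K$ gives $\Delta(A)=\sum_{K\subseteq A}K\otimes A\backslash K$. The extreme terms $K=\emptyset$ and $K=A$ are included, giving $1\otimes A$ and $A\otimes 1$.

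The only real point of care is the first step: matching the indexing conventions of $d_0,d_1,d_2$ on $(S_1\subseteq H)$ with the roles of $a$, $b$ and $h$ in \cref{producthallalgebra}. Everything after that is immediate.
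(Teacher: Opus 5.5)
Your proof is correct and follows the same route as the paper, which only says the argument is dual to the product computation in \cref{propdefmultiNOL}. You identify the fibre of $d_1$ over $A$ as the flags $(K\subseteq A)$ with $K$ induced in $A$, read off $d_2=K$ and $d_0=A\backslash K$, and observe that each coefficient is $1$; your explicit check of the face-map conventions on $X_2(G)$ is exactly the detail the paper leaves implicit.
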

\begin{proof}
This proof is dual to the one for the multiplication.
$\qedhere$
\end{proof}
\begin{rmk}
    The product and the coproduct are respectively commutative and cocommutative and all the coefficients in a product or a coproduct concerning basis elements are $1$. The commutativity is clear since every $(A\subseteq H)\in X_2(G)$ induces $(H\backslash A\subseteq H)$. The commutativity of the Hall algebra of a finitary proto-exact category $\mathcal{C}$ is often due to the existence of an exact equivalence $\mathcal{D}:\mathcal{C}^{op}\rightarrow \mathcal{C}$ such that $M\simeq D(M)$. We will see that in our case we are no exception to the rule.
\end{rmk}
\begin{ex}
Consider the subgraph $K$ of $G$ in \cref{exmulti}.
    \begin{align*}
        \Delta(K)= &
        ~\emptyset \otimes 
        \vcenter{\hbox{\begin{tikzpicture}
    \node[draw,circle, scale=.5, fill=orange](3) at (1,0){3};
    \node[draw,circle, scale=.5, fill=orange](4) at (1.5,-0.5){4};
    \node[draw,circle, scale=.5, fill=orange](5) at (2,0){5};
    \draw (3)--(5);
    \draw (3)--(4);
    \end{tikzpicture}}}
    +
    \vcenter{\hbox{\begin{tikzpicture}
    \node[draw,circle, scale=.5, fill=orange](3) at (1,0){3};
    \end{tikzpicture}}}
\otimes 
\vcenter{\hbox{\begin{tikzpicture}
    \node[draw,circle, scale=.5, fill=orange](4) at (1.5,-0.5){4};
    \node[draw,circle, scale=.5, fill=orange](5) at (2,0){5};
    \end{tikzpicture}}}
    +
   \vcenter{\hbox{ \begin{tikzpicture}
    \node[draw,circle, scale=.5, fill=orange](4) at (1.5,-0.5){4};
    \end{tikzpicture}}}
    \otimes
    \vcenter{\hbox{\begin{tikzpicture}
    \node[draw,circle, scale=.5, fill=orange](3) at (1,0){3};
    \node[draw,circle, scale=.5, fill=orange](5) at (2,0){5};
    \draw (3)--(5);
    \end{tikzpicture}}}
    +
    \vcenter{\hbox{\begin{tikzpicture}
    \node[draw,circle, scale=.5, fill=orange](5) at (2,0){5};
    \end{tikzpicture}}}
    \otimes
    \vcenter{\hbox{\begin{tikzpicture}
    \node[draw,circle, scale=.5, fill=orange](3) at (1,0){3};
    \node[draw,circle, scale=.5, fill=orange](4) at (1.5,-0.5){4};
    \draw (3)--(4);
    \end{tikzpicture}}}
    \\ &
    + 
    \vcenter{\hbox{\begin{tikzpicture}
    \node[draw,circle, scale=.5, fill=orange](3) at (1,0){3};
    \node[draw,circle, scale=.5, fill=orange](4) at (1.5,-0.5){4};
    \draw (3)--(4);
    \end{tikzpicture}}}
    \otimes
    \vcenter{\hbox{\begin{tikzpicture}
    \node[draw,circle, scale=.5, fill=orange](5) at (2,0){5};
    \end{tikzpicture}}}
    +
    \vcenter{\hbox{\begin{tikzpicture}
    \node[draw,circle, scale=.5, fill=orange](3) at (1,0){3};
    \node[draw,circle, scale=.5, fill=orange](5) at (2,0){5};
    \draw (3)--(5);
    \end{tikzpicture}}}
    \otimes
    \vcenter{\hbox{\begin{tikzpicture}
    \node[draw,circle, scale=.5, fill=orange](4) at (1.5,-0.5){4};
    \end{tikzpicture}}}
    +
    \vcenter{\hbox{\begin{tikzpicture}
    \node[draw,circle, scale=.5, fill=orange](4) at (1.5,-0.5){4};
    \node[draw,circle, scale=.5, fill=orange](5) at (2,0){5};
    \end{tikzpicture}}}
    \otimes
    \vcenter{\hbox{\begin{tikzpicture}
    \node[draw,circle, scale=.5, fill=orange](3) at (1,0){3};
    \end{tikzpicture}}}
    +
    \vcenter{\hbox{\begin{tikzpicture}
    \node[draw,circle, scale=.5, fill=orange](3) at (1,0){3};
    \node[draw,circle, scale=.5, fill=orange](4) at (1.5,-0.5){4};
    \node[draw,circle, scale=.5, fill=orange](5) at (2,0){5};
    \draw (3)--(5);
    \draw (3)--(4);
    \end{tikzpicture}}}
    \otimes \emptyset.
    \end{align*}
\end{ex}
\begin{rmk}
\begin{enumerate}
   \item The coproduct depends only the graph $K$ and not on $G$.
   \item Let $V$ be the set of vertices of $G$. Then $\hall(G)$ is $\N V$-graded, indeed every $A\in X_1(G)$ belongs to vector space graded by $\sum_{v\in V(A)}v$. Moroever the product and coproduct respect the grading.
\end{enumerate}
\end{rmk}
\subsection{Proto-exact category of an undirected graph}
We now examine whether the product and coproduct endow $\hall(G)$ with the structure of a bialgebra. It turns out that this is not the case in a direct manner. In analogy with Green’s theorem in the setting of hereditary abelian categories, we introduce a twist in the multiplication on $\hall(G)\otimes \hall(G)$.
A direct proof that $\hall(G)$ admits the structure of a twisted bialgebra can be obtained by comparing both sides of the identity $\Delta(AB)=\Delta(A)\cdot \Delta(B)$ for $A,B\in X_1(G)$. This argument closely parallels those given in \cref{bigebrecoproduitnaif} and \cref{thmbialgebraOL}, and is therefore left to the reader.
Instead, we present an alternative proof based on the same underlying ideas as the proof of Green’s theorem given by Dyckerhoff in \cite{Dyckerhoff_2018}. We refer to this work for a definition of the abstract Hall algebra.

To provide a similar proof, we begin by introducing a proto-exact category of a graph that yields the same Hall algebra as before.
\begin{definition}
    Let $G$ be a graph and let $\mathcal{C}_G$ be the category with objects subgraphs of $G$ and for $H,K\in \mathcal{C}_G$, Hom$(H,K):=\{L\text{ subgraph of }G \text{ induced in }H \text{ and } K\}$. The composition is given by intersection and the identity of a subgraph $H$ is $H$ itself.
\end{definition}
\begin{rmk}
   One may regard $\mathcal{C}_G$ as generated by the associated pointed stable double category (see \cite{bergner20172segalsetswaldhausenconstruction}), where horizontal and vertical morphisms are merged into a single class.
\end{rmk}

\begin{thm}
    The category $\mathcal{C}_G$ is a finitary and cofinitary proto-exact category (see \cite{DyckerhoffKapranov2019} definition 2.4.2) where the class of admissible monomorphisms $\mathfrak{M}$ is given by all the arrows of the form $H\overset{H}{\rightarrow}K$. Similarly the class of admissible epimorphisms $\mathfrak{E}$ is given by all the arrows of the form $H\overset{K}{\rightarrow}K$.
\end{thm}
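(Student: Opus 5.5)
The plan is to verify the axioms of \cite[Definition 2.4.2]{DyckerhoffKapranov2019} one at a time. Throughout, a morphism $H\overset{L}{\rightarrow}K$ is read as a ``subquotient'' span: $L$ is a quotient of $H$ (it is $H\backslash(H\backslash L)$) and a subobject of $K$. With this reading, $\mathfrak{M}$ consists of the inclusions of induced subgraphs and $\mathfrak{E}$ consists of the projections $H\to H\backslash C$ for $C\subseteq H$ induced.

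\textbf{Step 1: $\mathcal{C}_G$ is a category with a zero object and finite Hom sets.}
\begin{itemize}
\item Composition is well defined. Let $L\subseteq H,K$ and $L'\subseteq K,J$ be induced. Then $L\cap L'$ is an induced subgraph of $L$, hence of $H$, and of $L'$, hence of $J$.
\item Associativity comes from associativity of intersection, and $H$ acts as the identity on $H$.
\item The empty graph is a zero object, since $\mathrm{Hom}(\emptyset,K)=\mathrm{Hom}(K,\emptyset)=\{\emptyset\}$.
\item Hom sets are finite because $G$ is finite. Ext-type finiteness, i.e.\ finitely many middle terms of admissible short exact sequences with fixed ends, also follows from finiteness of $G$. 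This gives finitary and cofinitary.
\end{itemize}

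\textbf{Step 2: closure properties of $\mathfrak{M}$ and $\mathfrak{E}$.}
\begin{itemize}
\item Isomorphisms are exactly the identities: $L\circ L'=H$ forces $L=L'=H=K$. Identities lie in both classes.
\item Composites of arrows in $\mathfrak{M}$ stay in $\mathfrak{M}$, since induced-in-induced is induced and $H\cap K=H$. The dual argument works for $\mathfrak{E}$.
\item For every $A$, the arrow $\emptyset\to A$ lies in $\mathfrak{M}$ and $A\to\emptyset$ lies in $\mathfrak{E}$.
\end{itemize}

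\textbf{Step 3: biCartesian squares, which is the core of the proof.}
\begin{itemize}
\item Take a commutative square $A\hookrightarrow B$, $A\twoheadrightarrow A'$, $B\twoheadrightarrow B'$, $A'\hookrightarrow B'$. Commutativity means $A'\cap B' = A'$ on one side and $A\cap B'$ on the other, so $A'=A\cap B'$.
\item I will show that such a square is Cartesian iff it is coCartesian iff $B\backslash B'=A\backslash A'$. Equivalently, $A$ is the induced subgraph of $B$ on $V(A')\cup V(B\backslash B')$.
\item For the completion axioms, given $A'\hookrightarrow B'\twoheadleftarrow B$, define $A$ as the induced subgraph of $B$ on $V(A')\sqcup V(B\backslash B')$. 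Then $A\backslash(B\backslash B')=A'$, since $A'$ is induced in $B'$, which is induced in $B$.
\item Dually, given $A'\twoheadleftarrow A\hookrightarrow B$, set $B'=B\backslash(A\backslash A')$.
\item It remains to check the universal properties directly. A cone $D\overset{L_1}{\to}A'$, $D\overset{L_2}{\to}B$ compatible with the square satisfies $L_1=L_2\cap B'$. I will show that it factors uniquely through $D\overset{L_2}{\to}A$. Here $L_2\subseteq A$ must be deduced from compatibility. Note that $L_2\cap B'=L_1\subseteq A'$, so $V(L_2)\subseteq V(A')\cup V(B\backslash B')=V(A)$. Uniqueness holds because the arrow $A\to B$ is $\mathfrak{M}$, so composing with it does not change the label.
\end{itemize}

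I expect the main obstacle to be Step 3. Because Hom sets in $\mathcal{C}_G$ are unusual (morphisms are labelled by subgraphs and composition is intersection), pullbacks and pushouts must be checked against \emph{all} morphisms, not only admissible ones. The delicate point is showing that the condition $A\backslash A'=B\backslash B'$ is exactly what makes the square both Cartesian and coCartesian. A square where this fails, for instance one where $A$ is too small, must be shown to be neither. I would first handle the Cartesian direction via the vertex-set argument above, then obtain the coCartesian direction dually by the symmetry $\mathcal{C}_G\simeq\mathcal{C}_G^{op}$, $L\mapsto L$, which swaps $\mathfrak{M}$ and $\mathfrak{E}$.
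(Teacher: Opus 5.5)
Your proposal is correct and follows essentially the same route as the paper: the same explicit completions $A=B\backslash(B'\backslash A')$ and $B'=B\backslash(A\backslash A')$, and the same vertex-set check of the pullback property against arbitrary cones. It also makes the same reduction of ``Cartesian $\Leftrightarrow$ coCartesian'' to an explicit condition; given $A'=A\cap B'$, that condition is just $V(B)=V(A)\cup V(B')$, which is visibly self-dual, so your appeal to $\mathcal{C}_G\simeq\mathcal{C}_G^{op}$ works. The one step you leave as a plan, that a Cartesian square must satisfy this condition, follows immediately from uniqueness of pullbacks together with your observation that the only isomorphisms are identities.
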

\begin{proof}
Finitary and cofinitary aspect are obvious because all is finite.
    The category $\mathcal{C}_G$ is pointed by $\emptyset$ and every morphism $\emptyset\overset{\emptyset}{\rightarrow}K$ belongs to $\mathfrak{M}$ similarly $K\overset{\emptyset}{\rightarrow}\emptyset$ belongs to $\mathfrak{E}$. 
    
    Obviously the classes $\mathfrak{M}$ and $\mathfrak{E}$ are closed under composition and contain all the isomorphisms (only the identity in this case).

    Now take a diagram 
     \begin{equation}
    \xymatrix{ & B \ar[d]_D \\
	C \ar[r]_C & D}
\end{equation}
   and complete it by forming the diagram 
    \begin{equation}
    \xymatrix{A \ar[r]_{A}\ar[d]_{C} & B \ar[d] \\
	C \ar[r] & D}
\end{equation}
where $A:=B\backslash(D\backslash C)$. It is possible because we know that $D$ is an induced subgraph of $C$ which is itself an induced subgraph of $B$. Then vertical morphisms of this diagram are admissible epis and horizontals are admissible monos.

Now we check that it is Cartesian. Take $K$ a subgraph of $G$ with morphisms $$F_1:K\rightarrow B \text{ and } F_2:K\rightarrow C$$ such that the diagram is commutative i.e. $F_1\cap D=F_2\cap C=F_2$ (cause $F_2$ is induced in $C$). It follows from the equality that $F_1\cap C=F_2$ and $F_1\cap D\backslash C=\emptyset$. Consider the arrow $F_1:K\rightarrow A$, it is straighforward to verify that the diagram is commutative, so $A$ is a pullback. 

The dual property works the same way by completing a diagram
 \begin{equation}
    \xymatrix{A \ar[r]_{A}\ar[d]_{C} & B  \\
	C  & }
\end{equation}
by 
\begin{equation}\label{diagramfull}
    \xymatrix{A \ar[r]\ar[d] & B \ar[d]_D \\
	C \ar[r]_C & D}
\end{equation}
with $D=B\backslash(A\backslash C)$.

Now it remains to check that a diagram is Cartesian if and only if it is coCartesian. As we saw previously, a diagram as \ref{diagramfull} is Cartesian if and only if $A=B\backslash(D\backslash C)$ (with the notation above), and coCartesian if and only if  $D=B\backslash(A\backslash C)$. Now the equalies $A=B\backslash((B\backslash (A\backslash C))\backslash C)$ and $D=B\backslash((B\backslash (D\backslash C))\backslash C)$ show that a diagram is Cartesian if and only if it is coCartesian.
    $\qedhere$
\end{proof}
\begin{prop}
    The Hall algebra $\hall(G)$ is the same as the Hall algebra associated to the proto-exact category $\mathcal{C}_G$.
\end{prop}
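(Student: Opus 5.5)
The plan is to compare the two algebras on the common basis $X_1(G)$. The first step is to show that the isomorphism classes of objects of $\mathcal{C}_G$ are exactly the subgraphs of $G$. An isomorphism $H\overset{L}{\rightarrow}K$ has a two-sided inverse, and composition is intersection, so $L$ must equal $H$ and also equal $K$. Hence the only isomorphisms are identities, and $\mathcal{C}_G$ is skeletal. Consequently the abstract Hall algebra of \cite{Dyckerhoff_2018} has as a basis the characteristic functions of the objects, that is, the elements of $X_1(G)$, with no automorphism factors.

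Next, recall the abstract Hall product: $[A]\cdot[B]$ is the sum over $[H]$ of the number of admissible short exact sequences $A\rightarrowtail H\twoheadrightarrow B$, taken up to the appropriate equivalence. Since all automorphism groups are trivial, this equivalence just means equality of diagrams. An admissible mono $A\rightarrowtail H$ is the arrow labelled $A$, so it forces $A$ to be induced in $H$, and it is unique when it exists. An admissible epi $H\twoheadrightarrow B$ is the arrow labelled $B$, so it forces $B$ induced in $H$. Exactness means that the square with corners $A\to H$, $A\to \emptyset$, $\emptyset\to B$, $H\to B$ is bicartesian. By the criterion in the preceding proof ($D=B\backslash(A\backslash C)$, specialised with $C=\emptyset$), this holds exactly when $B=H\backslash A$.

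It follows that the structure constant counts the $H$ with $A\subseteq H$ induced and $H\backslash A=B$, each with multiplicity one. These are precisely the elements $(A\subseteq H)\in X_2(G)$ with $d_2=A$ and $d_0=B$, which is $c_{A,B}^H$. Equivalently, by \cref{propdefmultiNOL}, the product equals $\sum_{e\subseteq E(A,B)}AeB$ when $A\cap B=\emptyset$ and $0$ otherwise. The units agree, both being $\emptyset$. The coproducts agree for the same reason, since both are dual to the same count of short exact sequences.

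The main obstacle is the bookkeeping in the bicartesian-square identification. One must specialise the description of (co)Cartesian squares to the case where one corner is the zero object $\emptyset$. One must also check that the labels on the arrows $\emptyset\to B$ and $A\to\emptyset$ (both $\emptyset$) make the square commute automatically. Finally, one must check that there are no extra isomorphisms of sequences that would rescale the coefficients. Alternatively, and more conceptually, I would show that the Waldhausen $S_\bullet$-construction of $\mathcal{C}_G$ is isomorphic to $X(G)$ as simplicial sets. The Hall algebra of a proto-exact category is defined from its $S_\bullet$-construction, so the statement follows at once. The identification is level by level: a flag of admissible monos $S_1\subseteq\cdots\subseteq S_n$ determines the whole $S_n$-diagram, whose entries are the quotients $S_j\backslash S_i$. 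The face map $d_0$ then matches the map that removes $S_1$.
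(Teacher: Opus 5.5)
Your argument is correct, and the ``more conceptual'' alternative at the end of your proposal is exactly the paper's proof. The paper observes that a flag of admissible monomorphisms in the Waldhausen construction of $\mathcal{C}_G$ is the same thing as an element $(A_1\subseteq\cdots\subseteq A_n)\in X_n(G)$, with the same face and degeneracy maps. Hence $S_\bullet(\mathcal{C}_G)\cong X(G)$, and the two Hall algebras coincide by definition.

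Your main route is a genuinely different, more hands-on check: you restrict that identification to levels $1$ and $2$ and compare structure constants directly. This makes explicit two facts the paper's one-line proof uses without comment. First, every isomorphism in $\mathcal{C}_G$ is an identity, so the groupoids $S_n(\mathcal{C}_G)$ are discrete and no automorphism factors enter the counts. Second, a square with a zero corner is bicartesian precisely when $B=H\backslash A$. The paper's levelwise identification gives more: it transports the whole simplicial object at once, including the $2$-Segal structure, rather than only the product, coproduct and unit.

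One small correction: the square with corners $A$, $H$, $\emptyset$, $B$ does not commute automatically. The composite $A\to H\to B$ is labelled $A\cap B$, while $A\to\emptyset\to B$ is labelled $\emptyset$. So commutativity is equivalent to $V(A)\cap V(B)=\emptyset$. This does no harm, since it follows from the bicartesian condition $B=H\backslash A$, but you should state it as a consequence rather than as automatic.
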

\begin{proof}
Just remark that a flag $(A_1\hookrightarrow A_2 \hookrightarrow ... \hookrightarrow A_n)$ from the Waldhausen construction in $\mathcal{C}_G$ is the same as $(A_1\subseteq A_2 \subseteq... \subseteq A_n) \in X_n(G)$ and maps $d_i$ and $s_i$ are the same.
$\qedhere$
\end{proof}
\begin{prop}\label{exactequivalence}
Let $\Phi$ be the functor from $\mathcal{C}_G^{op}$ that sends an object $K$ to itself and a morphism $H\overset{L}{\rightarrow}K\in Hom_{C_G^{op}}(K,H)$ to $K\overset{L}{\rightarrow}H\in Hom_{C_G}(K,H)$.
 Then $\Phi$ is an exact equivalence with $\Phi(M)=M$, so the Hall algebra of $C_G$ is commutative.
 \end{prop}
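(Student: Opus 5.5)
We first check that $\Phi$ is a well-defined functor. A morphism $H\overset{L}{\rightarrow}K$ in $\mathcal{C}_G$ is a subgraph $L$ that is induced in both $H$ and $K$. This condition is symmetric in $H$ and $K$, so $L$ also defines a morphism $K\overset{L}{\rightarrow}H$. Composition in $\mathcal{C}_G$ is intersection, which is commutative. Hence for $H\overset{L}{\rightarrow}K\overset{L'}{\rightarrow}M$ the composite $L\cap L'$ in $\mathcal{C}_G$ agrees with the composite of the reversed arrows. This is exactly functoriality from $\mathcal{C}_G^{op}$. Identities are preserved since the identity of $H$ is $H$ in both categories. $\Phi$ is an involution on morphisms and the identity on objects, so it is an isomorphism of categories, in particular an equivalence, and $\Phi(M)=M$ for every object.

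The main step is exactness: $\Phi$ must send $\mathfrak{M}^{op}$ onto $\mathfrak{E}$ and $\mathfrak{E}^{op}$ onto $\mathfrak{M}$, and bicartesian squares to bicartesian squares.
\begin{itemize}
\item An admissible monomorphism $H\overset{H}{\rightarrow}K$ becomes, as an arrow of $\mathcal{C}_G^{op}$ from $K$ to $H$, the arrow $K\overset{H}{\rightarrow}H$. Its label is its target, so it lies in $\mathfrak{E}$.
\item Symmetrically, $H\overset{K}{\rightarrow}K\in\mathfrak{E}$ is sent to $K\overset{K}{\rightarrow}H\in\mathfrak{M}$.
\end{itemize}
Since $\Phi$ is an isomorphism of categories, it preserves all limits and colimits, with cartesian squares of $\mathcal{C}_G^{op}$ being cocartesian squares of $\mathcal{C}_G$ read backwards. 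Combined with the previous theorem, which says cartesian and cocartesian coincide for squares of the relevant shape, bicartesian squares go to bicartesian squares. This is the point needing care. I would check it by writing a bicartesian square $(A,B,C,D)$ of the form (\ref{diagramfull}) and verifying that its image is again of that form. The image has the roles of $B$ and $C$ exchanged and the arrows reversed, so the check reduces to the characterization $A=B\backslash(D\backslash C)$ proved earlier.

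Finally, commutativity of the Hall algebra follows from the standard argument recorded in the preceding remark. An exact equivalence $\mathcal{C}^{op}\to\mathcal{C}$ fixing isoclasses induces an anti-automorphism of the Hall algebra, since it exchanges subobjects and quotients. Because $\Phi(M)=M$, this anti-automorphism is the identity on the basis, so $AB=BA$. Alternatively, this can be checked directly: $\Phi$ gives the bijection $(A\subseteq H)\mapsto(H\backslash A\subseteq H)$ on $X_2(G)$, which exchanges $d_0$ and $d_2$ and fixes $d_1$. It therefore gives $c^h_{a,b}=c^h_{b,a}$, and this bijective check suffices for the conclusion.
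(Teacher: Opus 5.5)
Your proposal is correct and takes the same route as the paper. The paper's proof consists only of the observation that $\Phi$ is a strict involution, hence an isomorphism of categories exchanging $\mathfrak{M}$ and $\mathfrak{E}$; you supply the details, and the bicartesian step you flag as delicate is actually automatic, since such an isomorphism preserves all (co)Cartesian squares. Your direct commutativity check, via the involution $(A\subseteq H)\mapsto(H\backslash A\subseteq H)$ of $X_2(G)$ exchanging $d_0$ and $d_2$ and fixing $d_1$, is exactly the argument in the paper's earlier remark.
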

 \begin{proof}
 It is straightforward to verify that $\Phi$ is a strict involution.
 $\qedhere$
 \end{proof}
 \subsection{Twisted bialgebra}
\begin{definition}
    Let $A,B,C\in X_1(G)$, we will say that
    $$A\rightarrow B \rightarrow C$$
    is an exact sequence if it is of the form
    $$A\overset{A}{\rightarrow}B\overset{C}{\rightarrow}C,$$
    with $B=AeC$ for a set $e\subseteq E(A,C)$.
    \end{definition}
\begin{definition}\label{deftwistNOL}
Let's define $\tilde{\phi}$ to be 
    \begin{equation*}
\begin{array}{cccc}
    &\tilde{\phi} : V\times V& \rightarrow &\Z\sqcup \{-\infty\}   \\
     &(v_1,v_2) &\mapsto& \left\{
    \begin{array}{ll}
        -\infty ~~\mbox{ if }v_1=v_2\\
        |E(v_1,v_2)| ~~\mbox{ otherwise. }
    \end{array}\right.
\end{array}
\end{equation*}
It defines a bilinear application of monoids $\phi : \N V \times \N V \rightarrow \Z\sqcup \{-\infty\}$.
\end{definition}
\begin{ex}
    For $H$ and $K$ in \cref{exmulti}, $\phi(H,K)=2$ but $\phi(v_1,H)=-\infty$ where $v_1$ is the graph with only the vertex $1$.
\end{ex}
\begin{rmk}
    If $A,B\in X_1(G)$, the result of $\phi(A,B)$ is $|E(A,B)|$ if $A\cap B=\emptyset$ and $-\infty$ otherwise.
\end{rmk}
\begin{thm}\label{thmbialgebraNOL}
    Let $G$ be an undirected labelled graph, then $(\hall(G),\cdot,\Delta,\emptyset,\epsilon)$ is a $(\Q,2,\phi,\phi)$-bialgebra.
\end{thm}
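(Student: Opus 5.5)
The plan is to verify the identity $\Delta(AB)=\Delta(A)\cdot\Delta(B)$ directly on basis elements $A,B\in X_1(G)$. Here the twisted product on $\hall(G)\otimes\hall(G)$ is
$$(A_1\otimes A_2)\cdot(B_1\otimes B_2)=2^{\phi(A_1,B_2)+\phi(A_2,B_1)}\,A_1B_1\otimes A_2B_2 .$$
I will use the explicit descriptions of the product (\cref{propdefmultiNOL}) and of the coproduct $\Delta(A)=\sum_{K\subseteq A}K\otimes A\backslash K$. I will also use the remark that $\phi(C,D)=|E_G(C,D)|$ when $C\cap D=\emptyset$ and $\phi(C,D)=-\infty$ otherwise. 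The argument splits according to whether $A$ and $B$ share a vertex.

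\emph{Case $A\cap B\neq\emptyset$.} Then $AB=0$, so I must show every term of $\Delta(A)\cdot\Delta(B)$ vanishes. Fix a common vertex $v$ and terms $A_1\otimes A_2$ of $\Delta(A)$ and $B_1\otimes B_2$ of $\Delta(B)$. The vertex $v$ lies in exactly one of $A_1,A_2$ and in exactly one of $B_1,B_2$. If it lies in $A_1$ and $B_2$, or in $A_2$ and $B_1$, the twist exponent is $-\infty$, so the term is $0$. Otherwise $v$ lies in $A_i\cap B_i$ for some $i$, so $A_iB_i=0$ by \cref{propdefmultiNOL}.

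\emph{Case $A\cap B=\emptyset$.} I will expand both sides as sums of basis tensors and match them.

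For the left side, $\Delta(AB)=\sum_{e\subseteq E(A,B)}\sum_{K\subseteq AeB}K\otimes (AeB)\backslash K$. An induced subgraph $K$ of $AeB$ is determined by its vertex set $S=S_A\sqcup S_B$. Set $A_1=A|_{S_A}$, $A_2=A\backslash A_1$, and similarly $B_1,B_2$. Then $K=A_1e'B_1$ with $e'=e\cap E(A_1,B_1)$, and $(AeB)\backslash K=A_2e''B_2$ with $e''=e\cap E(A_2,B_2)$. The remaining part of $e$, namely $e\cap\big(E(A_1,B_2)\sqcup E(A_2,B_1)\big)$, is invisible in the tensor. Hence for a fixed choice of $(A_1,A_2,B_1,B_2,e',e'')$, the tensor $A_1e'B_1\otimes A_2e''B_2$ appears exactly $2^{|E(A_1,B_2)|+|E(A_2,B_1)|}$ times.

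For the right side, $\Delta(A)\cdot\Delta(B)=\sum 2^{\phi(A_1,B_2)+\phi(A_2,B_1)}A_1B_1\otimes A_2B_2$. All four pieces are pairwise vertex-disjoint, so the exponent is $|E(A_1,B_2)|+|E(A_2,B_1)|$. Expanding $A_1B_1=\sum_{e'}A_1e'B_1$ and $A_2B_2=\sum_{e''}A_2e''B_2$ gives exactly the same multiset of terms as the left side.

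The main point to check carefully is that the map $(e,S)\mapsto(A_1,A_2,B_1,B_2,e',e'')$ is well defined and that its fibres have exactly the claimed size. This needs two facts. First, $A_1$ is induced in $A$ and the edges of $AeB$ inside $A$ are exactly $E(A)$, so $K$ restricted to $A$ is indeed $A_1$. Second, distinct data give distinct basis tensors, because all coefficients in products and coproducts of basis elements are $1$. Once this bookkeeping is done, the two sides agree term by term, which completes the proof.
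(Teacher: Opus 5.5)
Your proof is correct, but it is not the argument the paper gives for this theorem. The paper names your direct term-by-term comparison as an option ``left to the reader'' (it parallels the proofs of \cref{bigebrecoproduitnaif} and \cref{thmbialgebraOL}). Instead it follows Dyckerhoff's proof of Green's theorem:
\begin{itemize}
\item it builds the proto-exact category $\mathcal{C}_G$;
\item it writes $\Delta(AB)$ as a span over crosses, and the untwisted $\Delta(A)\Delta(B)$ as a span over frames;
\item it proves that crosses are in bijection with $3\times 3$ squares, because the corners are forced to be $A\cap C_1$, $A\cap C_2$, $B\cap C_1$, $B\cap C_2$;
\item it shows that a frame is the boundary of exactly $2^{|E(A_1,B_2)|+|E(A_2,B_1)|}$ squares if $A_1\cap B_2=A_2\cap B_1=\emptyset$, and of no square otherwise.
\end{itemize}

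Your argument is the same combinatorics written out by hand. Your data $(e,S)$ is exactly a cross, and your splitting $S=S_A\sqcup S_B$ into $A_1,A_2,B_1,B_2$ is its completion to a square. Your fibre count over $(S_A,S_B,e',e'')$ is the number of middle terms filling a given frame. Your explicit case $A\cap B\neq\emptyset$ corresponds to the frames with $A_1\cap B_2\neq\emptyset$ or $A_2\cap B_1\neq\emptyset$, which admit no square; in that case there are no crosses at all.

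The paper's route gives the conceptual parallel with Green's theorem. It also sets up the later reading $2^{\phi(A,B)}=|\mathrm{Ext}^1(A,B)|/|\mathrm{Hom}(A,B)|$. The cost is constructing $\mathcal{C}_G$, identifying its Hall algebra with $\hall(G)$, and working in the abstract span formalism. Your route needs only the explicit product and coproduct formulas, and it makes explicit the final matching step that the paper states in one sentence.

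One small correction to your last paragraph. You say distinct data give distinct basis tensors ``because all coefficients are $1$''; that justification is off. Distinctness actually holds because $V(A)\cap V(B)=\emptyset$ lets you recover $S_A,S_B$ from the vertex set of $A_1e'B_1$, and $e',e''$ from its edges. The point is also unnecessary: both sides are sums over the same index set $(S_A,S_B,e',e'')$ with the same coefficient $2^{|E(A_1,B_2)|+|E(A_2,B_1)|}$, so they agree regardless.
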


\begin{proof}
We start by computing $\Delta(A B)$ for $A,B\in X_1(G)$.
As in \cite{Dyckerhoff_2018} (section 2.4.1), it is the span 
\begin{equation*}
    \xymatrix{ & \ar[dl]_l \cross \ar[dr]^r &\\ X_1(G) \times
		X_1(G) & & X_1(G) \times X_1(G)}
\end{equation*}
as we are working in the abstract Hall algebra,
where $\cross$ denotes the set of diagrams of the form
\begin{equation} \label{diagcross}
    \xymatrix{ & A \ar[d] & \\
			C_1 \ar[r] & C \ar[d]\ar[r] & C_2\\
		& B & }
\end{equation}
called crosses, where $A,B,C,C_1,C_2$ belongs to $X_1(G)$ and with $A\rightarrow C \rightarrow B$, $C_1\rightarrow C \rightarrow C_2$ are exact sequences in $\mathcal{C}_G$. The application $l$ returns $(A,B)$ and $r$ returns $(C_1,C_2)$.
\bigbreak The other side $\Delta(A)\Delta(B)$ (considered for the moment with the usual multiplication $(A_1\otimes A_2)(B_1\otimes B_2)=A_1B_1\otimes A_2B_2$) is given by the span
\begin{equation}\label{spanframe}
    \xymatrix{ & \ar[dl]_m \cadre \ar[dr]^n &\\ X_1(G) \times
		X_1(G) & & X_1(G) \times X_1(G)}
\end{equation}
where $\cadre$ is the set of diagrams of the form 
\begin{equation}\label{diagframe}
    \xymatrix{A_1 \ar[r]\ar[d] & A \ar[r] & A_2\ar[d] \\
		C_1 \ar[d] &  & C_2\ar[d] \\
	B_1 \ar[r] & B \ar[r] & C_2}
\end{equation}
called frames, where all rows and columns are exact sequences. The application $m$ returns $(A,B)$ and $n$ returns $(C_1,C_2)$.
If $\hall(G)$ were a genuine bialgebra the sets $\cross$ and $\cadre$ should be in a bijection. It will follow that it is not the case. To compare them like in \cite{Dyckerhoff_2018} we introduce the set $\carre$ of diagrams of the form
\begin{equation} \label{squarefig}
    \xymatrix{A_1 \ar[r]\ar[d] & A \ar[r] \ar[d] & A_2\ar[d] \\
		C_1 \ar[r]\ar[d] & C \ar[d]\ar[r] & C_2\ar[d]\\
	B_1 \ar[r] & B \ar[r] & B_2 }
    \end{equation}
    called square, where all rows and columns are exact sequences.
\begin{lem}
    The sets $\cross$ and $\carre$ are in bijection.
\end{lem}
\begin{proof}
    Consider a cross 
    \begin{equation*} 
    \xymatrix{ & A \ar[d] & \\
			C_1 \ar[r] & C \ar[d]\ar[r] & C_2\\
		& B & }
\end{equation*} 
and try to complete it in a square. The subgraphs $A$ and $C_1$ are induced subgraphs of $C$ and we want in the top left corner an induced subgraph common to $A$ and $C_1$. Assume that a vertex $i$ in common with $A$ and $C_1$ is not in the subgraph that we choose and that we succeed to complete it in a square 
\begin{equation*}
    \xymatrix{A_1 \ar[r]\ar[d] & A \ar[r] \ar[d] & A_2\ar[d] \\
		C_1 \ar[r]\ar[d] & C \ar[d]\ar[r] & C_2\ar[d]\\
	B_1 \ar[r] & B\ar[r] & B_2 }.
    \end{equation*}
    The vertex $i$ will be in the graph $A_2$ so in $C_2$ too but the vertex $i$ will be in $C_1$ and $C_2$ that absurd. So if we can complete a cross to a square the graph on the top left corner must be $A\cap C_1$, i.e. the induced subgraph on the vertices in common with $A$ and $C_1$. This result is the same for all the remaining graphs. From the cross we obtain the following square :
    \begin{equation*}
    \xymatrix{A\cap C_1 \ar[r]\ar[d] & A \ar[r] \ar[d] & A\cap C_2\ar[d] \\
		C_1 \ar[r]\ar[d] & C \ar[d]\ar[r] & C_2\ar[d]\\
	B\cap C_1 \ar[r] & B\ar[r] & B\cap C_2 }.
    \end{equation*}
    From a square, we can just forget all the corners to obtain a cross. This two maps are easily seen to be inverse and conclude the proof.
    $\qedhere$
\end{proof}
Now we have to investigate the connection between frame and square. Given a square, we can just forget the graph in the middle to obtain a frame but not all the frames can be obtain like this and various squares can give the same frame just because they differ in the middle graph. 
\begin{lem}
    Fix a frame $F$ such as \cref{diagframe}. If $A_1\cap B_2=A_2\cap B_1=\emptyset$ there is $2^{|E(A_1,B_2)|+|E(A_2,B_1)|}$ square that give the frame $F$, otherwise there is no square that give the frame $F$.
\end{lem}
\begin{proof}
Let be a square like in \cref{squarefig}, the middle term $C$ has vertices composed by those of $A_1,A_2,B_1$ and $B_2$. So if $A_1\cap B_2=A_2\cap B_1=\emptyset$ the frame $F$ can not be fill into a square. Otherwise $C$ is the graph composed by $A_1,A_2,B_1$ and $B_2$ and a set of edges between this four graphs. Edges between $A_1$ and $A_2$ are already determine by $A$ the same for $B_1$, $B_2$ by $B$, $A_1$, $B_1$ by $C_1$ and $A_2$, $B_2$ by $C_2$. So it remains to choose edges between $A_1$, $B_2$ and $A_2$, $B_1$ and all such choice gives a valid choice for $C$, so there is $2^{|E(A_1,B_2)|+|E(A_2,B_1)|}$ squares that give the frame $F$ after forgetting the middle term.
    $\qedhere$
    \end{proof}
    The previous lemma shows precisely that the twisted multiplication given by $\phi$ (\cref{deftwistNOL}) removes exactly those elements of $\Delta(A)\Delta(B)$ that do not appear in $\Delta(AB)$ and multiplies the remaining elements by the correct coefficient so that $\Delta(A)\cdot \Delta(B)=\Delta(AB)$.
    $\qedhere$
\end{proof}
\subsection{Primitive elements and presentation}
\begin{definition}
    Let $H$ be a subgraph of $G$ and $e\subseteq E(H)$ be a set of edges of $H$. We denote by $H_e$ the subgraph of $G$ with vertices $V(H)$ and with $e$ for edges.
\end{definition}
\begin{definition}
    Let $H$ be a subgraph of $G$. To $H$ we associate the element $S_H$ defined as 
    $$S_H=\sum_{e\subseteq E(H)}(-1)^{|e|}H_e.$$
\end{definition}
\begin{ex}
    For the graph $K$ of \cref{exmulti}, 
    \begin{center}
        
    $S_K:=\vcenter{\hbox{\begin{tikzpicture}
    \node[draw,circle, scale=.5, fill=orange](3) at (1,0){3};
    \node[draw,circle, scale=.5, fill=orange](4) at (1.5,-0.5){4};
    \node[draw,circle, scale=.5, fill=orange](5) at (2,0){5};
    \draw (3)--(5);
    \draw (3)--(4);
    \end{tikzpicture}}}
    - 
    \vcenter{\hbox{\begin{tikzpicture}
    \node[draw,circle, scale=.5, fill=orange](3) at (1,0){3};
    \node[draw,circle, scale=.5, fill=orange](4) at (1.5,-0.5){4};
    \node[draw,circle, scale=.5, fill=orange](5) at (2,0){5};
   
    \draw (3)--(4);
    \end{tikzpicture}}}
    -
    \vcenter{\hbox{\begin{tikzpicture}
    \node[draw,circle, scale=.5, fill=orange](3) at (1,0){3};
    \node[draw,circle, scale=.5, fill=orange](4) at (1.5,-0.5){4};
    \node[draw,circle, scale=.5, fill=orange](5) at (2,0){5};
    \draw (3)--(5);
   
    \end{tikzpicture}}}
    +
    \vcenter{\hbox{\begin{tikzpicture}
    \node[draw,circle, scale=.5, fill=orange](3) at (1,0){3};
    \node[draw,circle, scale=.5, fill=orange](4) at (1.5,-0.5){4};
    \node[draw,circle, scale=.5, fill=orange](5) at (2,0){5};

    \end{tikzpicture}}}$
    
    \end{center}
    
\end{ex}
\begin{rmk}\label{remgraduationSh}
    Each term in the expression of $S_H$ has the same grading as $H$. Each term in $S_H$ is a subgraph with an equal or lower number of edges than $H$.
\end{rmk}
\begin{prop}
    The set $\{S_H,H\in X_1^G\}$ is a basis of the vector space $\hall(G)$.
\end{prop}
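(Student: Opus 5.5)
The plan is to show that the transition matrix from the basis $X_1(G)$ of subgraphs to the family $\{S_H\}$ is unitriangular with respect to a suitable partial order. Then $\{S_H\}$ is a basis because it has the same cardinality as $X_1(G)$ and the change of basis matrix is invertible over $\Z$ (hence over $\Q$).

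First I will fix the partial order on $X_1(G)$: $K\preceq H$ if and only if $V(K)=V(H)$ and $E(K)\subseteq E(H)$. By definition,
$$S_H=H+\sum_{e\subsetneq E(H)}(-1)^{|e|}H_e .$$
Each $H_e$ is a subgraph of $G$ with vertex set $V(H)$ and edge set $e\subseteq E(H)$, so $H_e\preceq H$. The term with $e=E(H)$ is $H$ itself with coefficient $(-1)^{|E(H)|}$; I will keep track of this sign, which is harmless since it is $\pm1$. The distinct $e$ give distinct subgraphs $H_e$, so there are no cancellations. Thus, after extending $\preceq$ to a total order (for instance by number of edges, then arbitrarily), the matrix expressing the $S_H$ in terms of the $H$ is triangular with diagonal entries $\pm1$. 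This matches \cref{remgraduationSh}, which says every term has the same grading and at most as many edges.

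To make this concrete rather than rely only on triangularity, I will also write the inverse explicitly by Möbius inversion on the Boolean lattice of subsets of $E(H)$. Writing $\mathcal{S}_H:=\sum_{e\subseteq E(H)}(-1)^{|e|}H_e$, one has
$$H=(-1)^{|E(H)|}\sum_{e\subseteq E(H)}(-1)^{|e|}\,\mathcal{S}_{H_e}\cdot(\pm\text{ signs}),$$
or more cleanly $\sum_{e\subseteq E(H)} (-1)^{|e|}S_{H_e}=\pm H$, using $\sum_{f\subseteq e'\subseteq E(H)}(-1)^{|e'|}=0$ unless $f=E(H)$. I will check the sign bookkeeping once. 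This shows that each $H$ lies in the span of the $S_K$, so $\{S_H\}$ spans, and since it has $|X_1(G)|$ elements it is a basis.

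There is no real obstacle. The only point needing care is the sign convention: the coefficient of $H$ in $S_H$ is $(-1)^{|E(H)|}$ and not $1$, which affects the explicit inversion formula but not invertibility.
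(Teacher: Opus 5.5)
Your argument is correct and is essentially the paper's: the paper orders $X_1(G)$ first by number of vertices and then by number of edges, and observes via \cref{remgraduationSh} that the transition matrix to $\{S_H\}$ is triangular with $\pm1$ on the diagonal. Your explicit M\"obius inversion is a harmless extra, and it in fact gives exactly $\sum_{e\subseteq E(H)}(-1)^{|e|}S_{H_e}=H$ with no sign ambiguity, which makes integrality of the inverse transparent; also, your first display should read $(-1)^{|E(H)|}H$ rather than $H$, as you yourself note right after it.
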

\begin{proof}
   We choose an ordering on $X_1(G)$ such that the elements are sorted in ascending order, first by the number of vertices and then by the number of edges. From \cref{remgraduationSh}, we can see that the transition matrix of the map $S_\cdot$ is an upper triangular matrix with $\pm 1$'s on the diagonal. 
$\qedhere$
\end{proof}
\begin{prop}
    Let $H$ be a nonempty connected subgraph of $G$. Then $S_H$ is a primitive element i.e. 
    $$\Delta(S_H)=\emptyset\otimes S_H+S_H \otimes \emptyset.$$
\end{prop}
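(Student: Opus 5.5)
We compute $\Delta(S_H)$ directly from the coproduct formula. By linearity,
$$\Delta(S_H)=\sum_{e\subseteq E(H)}(-1)^{|e|}\sum_{K\subseteq H_e}K\otimes H_e\backslash K,$$
where $K$ runs over the induced subgraphs of $H_e$. An induced subgraph of $H_e$ is determined by a subset $W\subseteq V(H)$. Write $K=(H_e)[W]$ and $H_e\backslash K=(H_e)[V(H)\backslash W]$. We exchange the two sums and sum first over $W\subseteq V(H)$, then over $e\subseteq E(H)$.

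Fix $W$ and split $E(H)=E_W\sqcup E_{W^c}\sqcup E_{\mathrm{cut}}$. Here $E_W$ is the set of edges with both ends in $W$, $E_{W^c}$ the set with both ends in $V(H)\backslash W$, and $E_{\mathrm{cut}}=E_H(W,V(H)\backslash W)$ the edges joining the two parts. Every $e\subseteq E(H)$ decomposes uniquely as $e=e_1\sqcup e_2\sqcup e_3$ along this splitting. The term $K\otimes H_e\backslash K$ depends only on $e_1$ and $e_2$: it equals $(H[W])_{e_1}\otimes (H[V(H)\backslash W])_{e_2}$. Hence the contribution of $W$ factors as
$$\Big(\sum_{e_1\subseteq E_W}(-1)^{|e_1|}(H[W])_{e_1}\Big)\otimes\Big(\sum_{e_2\subseteq E_{W^c}}(-1)^{|e_2|}(H[V(H)\backslash W])_{e_2}\Big)\cdot\sum_{e_3\subseteq E_{\mathrm{cut}}}(-1)^{|e_3|}.$$
This is $S_{H[W]}\otimes S_{H[V(H)\backslash W]}$ multiplied by $\sum_{e_3\subseteq E_{\mathrm{cut}}}(-1)^{|e_3|}=(1-1)^{|E_{\mathrm{cut}}|}$. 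That factor is $0$ unless $E_{\mathrm{cut}}=\emptyset$.

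Connectedness now enters. If $W$ is a proper nonempty subset of $V(H)$, then since $H$ is connected there is at least one edge of $H$ between $W$ and its complement. So $E_{\mathrm{cut}}\neq\emptyset$ and the contribution of $W$ vanishes. Only $W=\emptyset$ and $W=V(H)$ survive. They give $\emptyset\otimes S_H$ and $S_H\otimes\emptyset$ respectively, using $S_\emptyset=\emptyset$ since the empty graph has no edges. Nonemptiness of $H$ ensures these two terms are distinct, which yields $\Delta(S_H)=\emptyset\otimes S_H+S_H\otimes\emptyset$.

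The main point to check is the bookkeeping in the exchange of sums. For a fixed $W$, the map $e\mapsto(e_1,e_2,e_3)$ must be a bijection from subsets of $E(H)$ onto triples of subsets of the three edge classes. The induced subgraph of $H_e$ on $W$ must have edge set exactly $e_1$, and similarly on $V(H)\backslash W$ it must be $e_2$. All coefficients in the coproduct are $1$, so no multiplicities interfere. Multiple edges and loops are harmless: a loop at a vertex of $W$ lies in $E_W$, and parallel cut edges only increase $|E_{\mathrm{cut}}|$.
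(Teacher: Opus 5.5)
Your proof is correct and is essentially the paper's argument: the paper fixes a term $M\otimes N$ and sums $(-1)^{|E'|}$ over the edge sets $E'$ that produce it. This gives the same alternating sum over the edges of $H$ crossing the cut, which vanishes because connectedness puts a crossing edge in every nontrivial cut. Your grouping by the vertex set $W$ and factoring out $S_{H[W]}\otimes S_{H[V(H)\backslash W]}$ is a tidier bookkeeping of that same computation, and as a bonus it gives $\Delta(S_H)$ for arbitrary $H$, not only connected ones.
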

\begin{proof}
$$\Delta(S_H)=\sum_{E\subseteq E(H)}(-1)^{|E|}\Delta(H_E).$$
Let $E\subseteq E(H)$ and  $M\otimes N$ a term of $\Delta(H_E)$. Let $E'\subseteq E(H)$, then
$$M\otimes N \text{ appears in }\Delta(H_{E'})\Longleftrightarrow E(M\sqcup N) \subseteq E'\subseteq E(M\sqcup N)\sqcup E_H(M,N),$$ where $E_H(M,N)=E(M,N)\cap E(H)$. Moreover if $M\otimes N$ appears in $\Delta(H_{E'})$, it appears exactly once. Assume that $d:=|E_H(M,N)|\geq 1$. After taking the projection on the factor $M\otimes N $ we calculate the sum 
\begin{align*}
   \sum_{E(M\sqcup N) \subseteq E'\subseteq E(M\sqcup N)\sqcup E_H(M,N)}&(-1)^{|E'|}M\otimes N\\
    &= \sum_{P\subseteq E_H(M,N)}(-1)^{|E(M\sqcup N)|+|P|}M\otimes N
    \\ &= (-1)^{|E(M\sqcup N)|}\left(\sum_{k=0}^d \binom{d}{k}(-1)^k\right)M\otimes N
    \\ &= 0.
\end{align*}
Now if $E(M,N)\cap E(H)=\emptyset$, it means that there is no edges in $H$ between $M$ and $N$, it is not possible because $H$ is supposed to be connected unless $(M,N)$ is $(H_E,\emptyset)$ or $(\emptyset,H_E)$. So
\begin{align*}
    \Delta(S_H)&=\sum_{E\subseteq E(H)}(-1)^{|E|}\Delta(H_E) \\
    &= \sum_{E\subseteq E(H)}(-1)^{|E|} (\emptyset\otimes H_E + H_E \otimes \emptyset) \\
    &= \emptyset \otimes S_H + S_H \otimes \emptyset.
\end{align*}
    $\qedhere$ 
\end{proof}
\begin{definition}
    Denote by $Con(G)$ the set of all nonempty connected subgraphs of $G$.
\end{definition}

\begin{prop}\label{propprimitivesNOL}
     The set $\{S_H,H\in Con(G)\}$ is a basis of the vector space $Prim(G)$.
\end{prop}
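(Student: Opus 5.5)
The previous proposition gives that every $S_H$ with $H\in Con(G)$ is primitive, and these elements are linearly independent because the full family $\{S_H, H\in X_1(G)\}$ is a basis of $\hall(G)$. So all that remains is to show that they span $Prim(G)$. My plan is to write an arbitrary primitive element in the $S$-basis, $p=\sum_{H\in X_1(G)}\lambda_H S_H$, and prove that $\lambda_H=0$ whenever $H$ is disconnected. The empty graph is excluded from the start, since $\epsilon(p)=0$ for a primitive $p$ and $\epsilon(S_\emptyset)=1$.

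The key computation is the coproduct of $S_H$ for a disconnected $H$. Write $H=H_1\sqcup H_2$ with no edges of $H$ between the two parts. Since $E(H)=E(H_1)\sqcup E(H_2)$, we have $H_e=(H_1)_{e_1}(H_2)_{e_2}$ minus the terms carrying extra edges of $G$ between $H_1$ and $H_2$. It is cleaner to compute $\Delta(S_H)$ directly. Redo the earlier calculation: for a term $M\otimes N$ with $M\sqcup N=V(H)$, the signed sum over $E'$ vanishes unless $E_H(M,N)=\emptyset$. Hence
$$\Delta(S_H)=\sum_{M\sqcup N=V(H),\ E_H(M,N)=\emptyset} S_{H|_M}\otimes S_{H|_N},$$
where $H|_M$ is the induced subgraph of $H$ on $M$, and the sign $(-1)^{|E'|}$ factorizes because $E'=E(M)\sqcup E(N)$. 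The sum therefore runs over splittings of $V(H)$ into unions of connected components of $H$. In particular, if $H$ is disconnected, $\Delta(S_H)$ contains the term $S_{H_1}\otimes S_{H_2}$ with both factors nonempty, where $H_1$ is a single component.

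Next I compare coefficients in the basis $\{S_A\otimes S_B\}$ of $\hall(G)\otimes\hall(G)$. By the formula, $\Delta(p)=\sum_H\lambda_H\sum_{(M,N)}S_{H|_M}\otimes S_{H|_N}$. The pair $(H|_M,H|_N)$ determines $H$ as the disjoint union, because no edges of $H$ join $M$ and $N$. So the coefficient of $S_A\otimes S_B$ with $A,B$ nonempty and vertex-disjoint is exactly $\lambda_{A\sqcup B}$. Primitivity forces all such coefficients to vanish, so $\lambda_H=0$ for every disconnected $H$, and therefore $p\in\mathrm{span}\{S_H, H\in Con(G)\}$.

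The main obstacle is establishing the closed formula for $\Delta(S_H)$ cleanly. In particular, I must check that the surviving terms really recombine into $S_{H|_M}\otimes S_{H|_N}$: the edge subsets $E'$ with $E_H(M,N)\cap E'=\emptyset$ correspond bijectively to pairs of edge subsets of $H|_M$ and $H|_N$, and the signs multiply. This is the same cancellation already used in the primitivity proof, so it should go through with minor bookkeeping.
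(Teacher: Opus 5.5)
Your proof is correct, and it takes a different route from the paper.

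\textbf{The paper's route.} The paper never computes $\Delta(S_H)$ for disconnected $H$. It first splits a primitive element into its connected and disconnected parts; both parts are then primitive. Among the disconnected $K_k$ it picks $\tilde K$ with the largest number of edges, writes $\tilde K=I\sqcup\bar I$, and pairs against the product $I\cdot\bar I$. Every term of $I\cdot\bar I$ other than $\tilde K$ has strictly more edges than any term of the disconnected part. Hence $\langle\sum_k\mu_kS_{K_k},I\cdot\bar I\rangle=\pm\tilde\mu\neq 0$, which contradicts adjunction together with primitivity. This is a leading-term argument. It needs only the shape of one product and the pairing.

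\textbf{Your route.} You prove the exact identity $\Delta(S_H)=\sum S_{H|_M}\otimes S_{H|_N}$, summed over splittings of $V(H)$ into unions of components of $H$. You then read off coefficients in the basis $\{S_A\otimes S_B\}$, using that the pair $(A,B)$ determines $H=A\sqcup B$. This needs no extremal choice and no scalar product.

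\textbf{What your route gives in addition.} It proves more than the statement. The identity says that the bijection $H\mapsto S_H$ carries the naive coproduct $\Delta'$ of \cref{defnaivecoproduct} to the Hall coproduct $\Delta$. It also preserves $\epsilon$. So it is a coalgebra isomorphism from $(\hall(G),\Delta')$ to $(\hall(G),\Delta)$. This recovers the preceding proposition, primitivity of $S_H$ for connected $H$, as the special case with only two splittings. It also explains the paper's later remark that the primitives of $\Delta$ and $\Delta'$ are in canonical bijection.

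\textbf{The step you flagged.} The recombination you called the main obstacle works as you expect. Fix $(M,N)$. Each $E'\subseteq E(H)$ splits uniquely as $E_M\sqcup E_N\sqcup E_{MN}$, with $E_{MN}\subseteq E_H(M,N)$. The factor $\sum_{E_{MN}\subseteq E_H(M,N)}(-1)^{|E_{MN}|}$ equals $1$ if $E_H(M,N)=\emptyset$ and $0$ otherwise. What remains is exactly $S_{H|_M}\otimes S_{H|_N}$.
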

\begin{proof}
Let $P$ be a primitive element and $P=\sum_i\lambda_iS_{H_i}$ its decomposition in the basis $\{S_{H}~|~H\in X_1(G)\}$, so with $\lambda_i\neq 0$, for all i.
\newline First, we check that $H_i\neq \emptyset$ for all $i$, so that $\langle P,\emptyset\rangle=0$. Indeed 
\begin{align*}
    \langle P,\emptyset\rangle= \langle P,\emptyset\cdot \emptyset \rangle = \langle \Delta(P),\emptyset \otimes \emptyset\rangle = \langle \emptyset \otimes P + P \otimes \emptyset, \emptyset \otimes \emptyset \rangle =0.
\end{align*}
So $P=\sum_i\lambda_iS_{H_i}$ with $H_i\neq \emptyset$ for all $i$. More precisely, $P=\sum_j\lambda_jS_{H_j}+\sum_k\mu_kS_{K_k}$ with $H_j$ connected and $K_k$ not connected. As $P$ is primitive and $\sum_j\lambda_jS_{H_j}$ too, it implies that $\sum_k\mu_kS_{K_k}$ is primitive too. Among all the subraghs $K_k$, choose one with the maximal number of edges, we denote it by $\tilde{K}$. As $\tilde{K}$ is not connected and nonempty, we can find $I$ and $\bar{I}$ nonempty induced subgraphs of $\tilde{K}$ such that $\tilde{K}=I\sqcup\bar{I}$. We claim that $$\langle \sum_k\mu_kS_{K_k},I \cdot \bar{I}\rangle\neq 0.$$ Indeed on the left side of the scalar product, it is a sum with terms that are all subgraphs with a lower or equal number of edges than $\tilde{K}$. If another $K_a$ has the same number of edges it must has different vertices, otherwise $K_a=\tilde{K}$. So, on the left side there is only one term having the same amount of edges and the same vertices than $\tilde{K}$ that is $\tilde{K}$ itself. On the right side, all the terms in the product of $I\cdot \bar{I}$ have the same vertices than $\tilde{K}$ and strictly more edges than $\tilde{K}$ except one, that is $\tilde{K}=I\sqcup \bar{I}$. 

So $$\langle \sum_k\mu_kS_{K_k},I\cdot \bar{I}\rangle = \tilde{\mu}\times(-1)^{|E(\tilde{K})|}$$ and $$\langle \sum_k\mu_kS_{K_k},I\cdot \bar{I}\rangle=\langle \Delta(\sum_k\mu_kS_{K_k}),I\otimes \bar{I\rangle}=0$$ as $\sum_k\mu_kS_{K_k}$ is primitive. It is a contradiction and $\mu_k=0$ for all $k$.
    $\qedhere$
\end{proof}
\begin{rmk}
    $\hall(G)$ is $\N$-graded by the number of vertices (after forgetting the labels).
\end{rmk}

\begin{thm}\label{thmpresentationNOL}
    We have the following isomorphism
    $$\hall(G)\simeq \Q[x_H|H\in Con(G)]/\!\raisebox{-.65ex}{\ensuremath{(x_Hx_K|H\cap K\neq \emptyset)}}$$
\end{thm}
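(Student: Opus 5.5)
We define $\Psi:\Q[x_H\mid H\in Con(G)]\to\hall(G)$ by $x_H\mapsto S_H$. Since $\hall(G)$ is commutative, $\Psi$ is a well-defined algebra map, and it is surjective because $\{S_H\mid H\in Con(G)\}$ spans $Prim(G)$ by \cref{propprimitivesNOL}, which generates $\hall(G)$ by \cref{corgenerate}.

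\emph{Relations hold.} If $H\cap K\neq\emptyset$, then every term $H_e$ of $S_H$ and $K_{e'}$ of $S_K$ share a vertex, so $H_eK_{e'}=0$ by \cref{propdefmultiNOL}, hence $S_HS_K=0$. So $\Psi$ factors through the quotient $Q:=\Q[x_H]/(x_Hx_K\mid H\cap K\neq\emptyset)$.

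\emph{Dimension count.} A basis of $Q$ is given by the monomials $x_{H_1}\cdots x_{H_k}$ with the $H_i$ pairwise vertex-disjoint (no repetitions, since $H\cap H\neq\emptyset$). Such monomials correspond bijectively to subgraphs $K$ of $G$ together with a partition of $V(K)$ and a choice of connected subgraph on each block, with no edges of $K$ between blocks. This is the same as a subgraph $F$ (the union $\bigsqcup H_i$, with only intra-block edges) plus a partition of $V(F)$ into blocks each inducing a connected subgraph of $F$. Each subgraph $F$ has a unique decomposition into connected components, but a block need not be a single component, so this count overcounts $|X_1(G)|$.

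The fix is to enforce the unique decomposition by reading the monomial differently. Define $\Theta:Q\to\hall(G)$ on the monomial basis: a monomial $x_{H_1}\cdots x_{H_k}$ with disjoint $H_i$ is a disjoint union $F=\bigsqcup H_i$, and each $H_i$ is connected, so the $H_i$ are exactly the connected components of $F$. Thus these monomials are in bijection with subgraphs $F$ all of whose components are the $H_i$, i.e.\ with all subgraphs of $G$ via $F\mapsto$ its components. The worry above was unfounded, since a block inducing a connected subgraph of $F=\bigsqcup H_i$ is by definition a component. Hence $\dim Q_d=\dim\hall(G)_d$ in each $\N V$-degree, and finite dimensionality gives injectivity from surjectivity.

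\emph{Surjectivity check via triangularity.} As a safety check, show directly that $\Psi$ maps the monomial basis to a basis. For disjoint connected $H_1,\dots,H_k$, expand $S_{H_1}\cdots S_{H_k}$ using \cref{propdefmultiNOL}: every term is a graph on $V(F)$ whose edges restricted inside each $V(H_i)$ form a subset of $E(H_i)$, plus arbitrary inter-block edges. The unique term with edge set exactly $E(F)$ and no inter-block edges is $F$ itself, with coefficient $1$. Ordering by vertices and then by the number of edges is not obviously triangular, because inter-block edges can add edges. This is the main obstacle. The plan is to avoid it by relying on the dimension argument, which is clean, rather than on triangularity.
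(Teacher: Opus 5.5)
Your proof is correct, and it reaches injectivity by a different route from the paper. You share the map $x_H\mapsto S_H$ and the surjectivity argument via \cref{propprimitivesNOL} and \cref{corgenerate}. The paper then gets injectivity from \cref{proprelationprimitifs} (a minimal relation involves a single multiset of primitives) together with commutativity.

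You instead count dimensions. The quotient by the monomial ideal has as basis the monomials $x_{H_1}\cdots x_{H_k}$ with the $H_i$ pairwise vertex-disjoint, hence automatically squarefree. The assignment $\{H_1,\dots,H_k\}\mapsto\bigsqcup_i H_i$ is a degree-preserving bijection onto $X_1(G)$, whose inverse sends a subgraph to its set of connected components. So $\dim Q=|X_1(G)|=\dim\hall(G)$, and a surjection between finite-dimensional spaces of equal dimension is an isomorphism.

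Your route is elementary and uses nothing about the coproduct beyond surjectivity. It also avoids a delicate point in the paper's route. \cref{proprelationprimitifs} rests on \cref{proporthogonalityprimitive}, whose second part treats distinct primitives as orthogonal. Here that fails: $\langle S_H,S_{H'}\rangle=2^{|E(H)\cap E(H')|}\neq 0$ for distinct connected $H,H'$ with $V(H)=V(H')$, for instance two paths in a triangle. So your count is in fact the more robust argument for this theorem. The paper's approach is a template it reuses for directed graphs (\cref{thmpresentationOL}), where counting words modulo partial commutations is less immediate.

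For the write-up, delete the false ``overcount'' start and the abandoned triangularity paragraph. In that paragraph $F$ actually occurs with coefficient $(-1)^{|E(F)|}$, not $1$. The bijection in your third paragraph, together with surjectivity, is the whole proof.
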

\begin{proof}
Consider $f$ that maps $x_H$ to $S_H$. It is clear that the algebra morphism $f$ is well defined since $$S_H S_K=0 \text{ if } H\cap K\neq \emptyset.$$using \cref{propdefmultiNOL} and \cref{remgraduationSh}. Due to the previous remark we can apply \cref{corgenerate} to show that the map is surjective.  For injectivity use \cref{proprelationprimitifs} and use the commutativity.
    $\qedhere$
\end{proof}

\subsection{A parallel with classical Hall algebras}
By the term classical Hall algebra we mean the Hall algebra of the category of nilpotent representations of a quiver $\overset{\rightarrow}{Q}$ over a finite field $\F_q$, denoted $\hall_{\overset{\rightarrow}{Q}}$ ; see \cite{schiffmann2009lectureshallalgebras} for more details. 
In this section we try to investigate the similarities between the twist on graphs and the other one introduced by Green in \cite{greenhereditaryalgebra1995} for $\hall_{\overset{\rightarrow}{Q}}$. Let's just remember the classical twist : let $A,B,A',B'$ be objects in $Rep_{\F_q}(\overset{\rightarrow}{Q})$ then
\begin{equation*}
        (\mathbf{1}_A \otimes \mathbf{1}_B) (\mathbf{1}_{A'} \otimes \mathbf{1}_{B'}) :=  \frac{|\text{Ext}^1(A',B)|}{|\text{Hom}(A',B)|}(\mathbf{1}_A \mathbf{1}_{A'})
			\otimes (\mathbf{1}_B \mathbf{1}_{B'}).
    \end{equation*}
This twist is well-defined in an abelian category, whereas the proto-exact category associated with a graph is not abelian. We therefore need to adapt the standard terminology of homological algebra slightly in our case.
Recall that for $A,B,C\in X_1(G)$, 
    $$A\rightarrow B \rightarrow C$$
    is an exact sequence if it is of the form
    $$A\overset{A}{\rightarrow}B\overset{C}{\rightarrow}C$$
    with $B=AeC$ for a set $e\subseteq E(A,C)$.
\begin{definition}
    With the same anology we define long exact sequences to be
    $$A_0\overset{B_0}{\rightarrow} A_1 \overset{B_1}{\rightarrow} ... \overset{B_{n-2}}{\rightarrow} A_{n-1} \overset{B_{n-1}}{\rightarrow} A_n $$
    with $B_0=A_0$, $B_{n-1}=A_n$ and $B_{i+1}=A_{i+1}\backslash B_i$ for $0\leq i \leq n-2$.

    We define $Ext^n_{\mathcal{C}_G}(A,B)$ to be the usual Yoneda equivalence relation between exact sequences of length $n$.
    
    We say that $\mathcal{C}_G$ is of global dimension $n$ if $|Ext^k(A,B)|=1$ for all $A,B\in X_1(G)$ and $k>n$ and there exist $P$ and $Q$ such that $|Ext^n(P,Q)|>1$.
\end{definition}
\begin{prop}
    Let $G$ be a graph. If $G$ contains at least two vertices with an edge between them then $\mathcal{C}_G$ is hereditary, i.e., $\mathcal{C}_G$ is of global dimension one. Otherwise, $G$ is empty or a disjoint union of vertices with loops and $\mathcal{C}_G$ is of global dimension $0$.
\end{prop}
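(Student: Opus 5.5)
Everything reduces to understanding what Yoneda equivalence identifies in $\mathcal{C}_G$, so we first make that explicit. An exact sequence $A\to C\to B$ is exactly a choice of edge set $e\subseteq E(A,B)$ with $C=AeB$; in particular such a sequence exists only when $A\cap B=\emptyset$. Hence $Ext^1(A,B)$ is a quotient of the power set of $E(A,B)$, and we will show that this quotient is trivial, i.e.\ $|Ext^1(A,B)|=2^{|E(A,B)|}$. A morphism of extensions over fixed ends $A$ and $B$ is a morphism $C\to C'$ in $\mathcal{C}_G$ commuting with the given maps. By definition such a morphism is a subgraph $L$ that is induced in both $C$ and $C'$. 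Commutativity forces $L\cap A=A$ and $L\cap B=B$ on the vertex sets, so $L$ has all the vertices of $C$. Since $L$ is induced in both, this gives $C=L=C'$ as graphs. Thus Yoneda equivalence is trivial in degree $1$. In particular $|Ext^1(v_1,v_2)|=2^{|E(v_1,v_2)|}>1$ whenever $G$ has an edge between two distinct vertices. Conversely, if every edge is a loop, then $E(A,B)=\emptyset$ for all disjoint $A,B$, so $|Ext^1|=1$ always. This already settles the global dimension $0$ case, once we also check that the higher Ext groups vanish.

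The heart of the argument is to show $|Ext^n(A,B)|=1$ for all $n\ge2$. A long exact sequence from $A$ to $B$ of length $n\geq 2$ is determined by the intermediate graphs $A_1,\dots,A_{n-1}$. The conditions $B_{i+1}=A_{i+1}\backslash B_i$ mean that each $A_i$ is the union of two consecutive pieces $B_{i-1},B_i$ glued along some edges. The plan is to compare every such sequence with a canonical one in which the middle pieces are empty. The idea is the usual one: splice through a sequence in which the inner terms are moved to $\emptyset$ or split off. Concretely, for $n=2$ we take $A\to A_1\to A_2\to B$ with $A_1=A e_1 M$ and $A_2=M e_2 B$, and we build a zig-zag of morphisms of length-$2$ extensions to $A\to A\to B\to B$ (with $M$ replaced by $\emptyset$). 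For this we use the morphisms $A_1\overset{A}{\to} A$ and $B\overset{B}{\to}A_2$, whose labels are induced subgraphs on both sides. Each square then commutes because composition is intersection. Any two length-$2$ extensions are therefore connected to this canonical one, so $|Ext^2(A,B)|=1$; when $A\cap B\neq\emptyset$, the same zig-zag still works because the pieces only need to satisfy the exactness of each short step. For $n>2$ we argue the same way, or factor the sequence as a Yoneda product of $Ext^1$ classes and show that the Yoneda product of two $Ext^1$ classes is already trivial in $Ext^2$.

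The main obstacle is the $n\ge 2$ step: checking carefully that the chosen maps really are morphisms of $\mathcal{C}_G$, i.e.\ that the labelling subgraphs are induced in both source and target and that the diagrams commute under the intersection rule. Some squares may fail because $M$ is not induced in $A_1$ in the required way. If so, I would first try the other direction of the zig-zag, mapping the canonical sequence into the given one via $A\to A_1$ labelled $A$ and $A_2\to B$ labelled $B$. Combining both directions should yield a connection in all cases.
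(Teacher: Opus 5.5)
Your computation of $Ext^1$ is sound. The label $L$ of any morphism of extensions contains $V(A)\sqcup V(B)$ and is induced on both sides, so $C=L=C'$. This is more explicit than the paper, which simply asserts that $1\to 12\to 2$ and $1\to\underline{12}\to 2$ are inequivalent.

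The gap is the step you flag yourself: $|Ext^n(A,B)|=1$ for $n\ge 2$. A morphism of long exact sequences needs all vertical arrows pointing the same way, and both pairs you propose are mixed. $A_1\overset{A}{\to}A$ goes from the given sequence to the canonical one, while $B\overset{B}{\to}A_2$ goes back. Likewise $A\overset{A}{\to}A_1$ and $A_2\overset{B}{\to}B$ point in opposite directions. Neither pair is a morphism of sequences, and no intermediate sequence for a zig-zag is ever written down. ``Combining both directions should yield a connection'' is not an argument, so the higher Ext groups are not actually controlled.

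The missing observation is that a single morphism does the job. Send $A\overset{A}{\to}C_1\overset{B_1}{\to}\cdots\overset{B_{n-1}}{\to}C_n\overset{B}{\to}B$ to $A\overset{A}{\to}A\overset{\emptyset}{\to}\emptyset\to\cdots\to\emptyset\overset{\emptyset}{\to}B\overset{B}{\to}B$ (for $n=2$ simply $A\overset{A}{\to}A\overset{\emptyset}{\to}B\overset{B}{\to}B$). Use the vertical maps $\mathrm{id}_A$, $C_1\overset{A}{\to}A$, $C_i\overset{\emptyset}{\to}\emptyset$ for $1<i<n$, $C_n\overset{B}{\to}B$, and $\mathrm{id}_B$.

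\begin{itemize}
\item The labels are legitimate, since $A$ is induced in $C_1$ and $B$ is induced in $C_n$.
\item The two outer squares commute trivially.
\item In each inner square both composites are $\emptyset$. The only non-obvious case is $B_{n-1}\cap B=\emptyset$, which holds because $C_n$ is $B_{n-1}$ and $B$ glued along edges.
\end{itemize}

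This is precisely the paper's proof. For $n=2$ you actually had all four arrows, just mismatched: the pair $A\overset{A}{\to}A_1$, $B\overset{B}{\to}A_2$ also works, as a morphism from the canonical sequence into the given one. This direct morphism also makes your Yoneda-product fallback for $n>2$ unnecessary. That is just as well, since well-definedness of Yoneda products on classes is not automatic in this non-abelian proto-exact setting.

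A minor point: in the all-loops case $Ext^1(A,B)$ is empty rather than a singleton when $A\cap B\neq\emptyset$. So ``$|Ext^1|=1$ always'' should read ``at most one class''. The paper's definition of global dimension has the same imprecision.
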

\begin{proof}
    If $G$ contains two vertices linked by an edge, denote by $1$ and $2$ the subgraphs with only one vertex and by $12$ and $\underline{12}$ respectively the subgraphs with two vertices and no edge and with two vertices and an edge between them. Consider the two exact sequences 
    $$1\overset{1}{\rightarrow}12\overset{2}{\rightarrow}2 \text{ and }1\overset{1}{\rightarrow}\underline{12}\overset{2}{\rightarrow}2.$$
    These are two non equivalent exact sequences, so global dimension is greater or equal to $1$.
    Let $A\overset{A}{\rightarrow}C_1\rightarrow ... \rightarrow C_{n}\overset{B}{\rightarrow} B$ an $n$-exact sequence. It is equivalent to the long exact sequence 
    $$A\overset{A}{\rightarrow}A\rightarrow \emptyset... \emptyset\rightarrow B\overset{B}{\rightarrow} B$$ with $C_1\overset{A}{\rightarrow}A$, $C_n\overset{B}{\rightarrow}B$ and trivial morphisms elsewhere.
    \smallbreak Now if $G$ belongs to the list in the second part of the proposition it is straightforward to verify the result.
    $\qedhere$
\end{proof}
\begin{rmk}
    In this context the usual Euler form $(A,B)=\prod_i|Ext^i(A,B)|^{(-1)^i}$ is just $\frac{|Ext^1(A,B)|}{|Hom(A,B)|}$ like in the quiver context.
\end{rmk}
\begin{prop}
    For $A,B\in X_1(G)$, $Ext^1(A,B)\simeq E(A,B)$. In particular if \newline $A\cap B\neq \emptyset$, then $Ext^1(A,B)=\emptyset$.
\end{prop}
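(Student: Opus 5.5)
The plan is to read $Ext^1(A,B)$ concretely as the set of Yoneda classes of exact sequences $A\rightarrow C\rightarrow B$ in $\mathcal{C}_G$. I would show three things: every such sequence is determined by a subset $e\subseteq E(A,B)$; every subset occurs; and Yoneda equivalence identifies no two distinct sequences. This gives a bijection between $Ext^1(A,B)$ and the set of subsets of $E(A,B)$, i.e. $\F_2^{E(A,B)}$, of cardinality $2^{|E(A,B)|}$. This is how I read the statement $Ext^1(A,B)\simeq E(A,B)$. It also matches the factor $2^{\phi(A,B)}$ in the twist of \cref{thmbialgebraNOL}.

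\textbf{Step 1 (description of the sequences).} By the definition of exact sequence, $A\rightarrow C\rightarrow B$ means $C=AeB$ with $e\subseteq E(A,B)$, so that $A$ and $B$ are induced in $C$ and $C\backslash A=B$. For this, $V(C)=V(A)\sqcup V(B)$ is needed, which is impossible when $A\cap B\neq\emptyset$. This already gives the second assertion: there is no exact sequence, so $Ext^1(A,B)=\emptyset$. When $A\cap B=\emptyset$, \cref{propdefmultiNOL} shows that every $e\subseteq E(A,B)$ yields a valid sequence $A\overset{A}{\rightarrow}AeB\overset{B}{\rightarrow}B$. Moreover, distinct subsets $e$ give distinct middle terms, since the edge sets differ.

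\textbf{Step 2 (Yoneda equivalence is trivial).} This is the main point. Suppose $L\colon C\rightarrow C'$ is a morphism between the middle terms of two extensions $AeB$ and $Ae'B$ that makes the diagram commute with the identities on $A$ and $B$. Composition in $\mathcal{C}_G$ is intersection, and the identity of a subgraph is the subgraph itself. So commutativity on the left reads $A\cap L=A$, and on the right reads $L\cap B=B$, once the morphisms are written as in the definition. Hence $V(L)\supseteq V(A)\sqcup V(B)=V(C)=V(C')$. Since $L$ is an induced subgraph of both $C$ and $C'$ with all their vertices, we get $E(C)=E(L)=E(C')$, so $e=e'$ and $L$ is the identity. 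Yoneda equivalence is the equivalence relation generated by such morphisms (in either direction), so each class is a single sequence. This yields the bijection $e\mapsto[A\rightarrow AeB\rightarrow B]$.

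\textbf{Main obstacle.} The delicate step is checking the direction and meaning of the commutativity conditions in this non-abelian proto-exact setting. In particular, the composite of $A\overset{A}{\rightarrow}C$ with $L$ has to be interpreted as $A\cap L$ via the paper's convention on $\mathrm{Hom}$. I would verify this carefully on both sides before concluding that $L$ must contain every vertex. If a group structure is wanted, I would additionally check that the Baer sum corresponds to symmetric difference of edge sets. However, the stated isomorphism, and its use in comparing with Green's twist, only requires the bijection and the count $2^{|E(A,B)|}$.
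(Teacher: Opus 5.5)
Your proposal is correct and follows the paper's proof: every extension is $A\rightarrow AeB\rightarrow B$ for some $e\subseteq E(A,B)$, and a morphism $L$ of extensions must satisfy $V(L)\supseteq V(A)\sqcup V(B)$, so, being induced in both middle terms, it equals both, which forces $e=e'$. You read $Ext^1(A,B)\simeq E(A,B)$ as a bijection with the subsets of $E(A,B)$, which is how the paper uses it (cf.\ $|Ext^1(A,B)|=2^{|E(A,B)|}$ in the next proposition), and you also spell out the case $A\cap B\neq\emptyset$, which the paper leaves implicit.
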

\begin{proof}
A set of edges $e\subseteq E(A,B)$ gives an element $A\rightarrow AeB \rightarrow B$ of $Ext^1(A,B)$. Take $e$ and $e'$ two different sets such that $AeB$ and $Ae'B$ give two equivalent exact sequences by a morphism $AeB\overset{X}{\rightarrow}{Ae'B}$. The commutativity imply that $V(X)=V(A)\sqcup V(B)$, as $X$ is induced in $AeB$ we have that $X=AeB$, the same for $X=Ae'B$ and it is a contradiction. 
$\qedhere$
\end{proof}
\begin{prop}
    The following equality holds :
    $$2^{\phi(A,B)}=\frac{|Ext^1(A,B)|}{|Hom(A,B)|}.$$
\end{prop}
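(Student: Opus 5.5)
We must show $2^{\phi(A,B)}=|Ext^1(A,B)|/|Hom(A,B)|$ for $A,B\in X_1(G)$. The plan is to compute both sides separately, splitting into the two cases $A\cap B=\emptyset$ and $A\cap B\neq\emptyset$ (intersection of vertex sets), since this is exactly how $\phi$ behaves by the remark following \cref{deftwistNOL}. We read $2^{-\infty}$ as $0$, following the convention $v^{-\infty}=0$ of \cref{defusualtwist}.

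First we compute $Hom(A,B)$. By definition of $\mathcal{C}_G$, it is the set of subgraphs $L$ that are induced both in $A$ and in $B$. Such an $L$ is determined by its vertex set $V(L)\subseteq V(A)\cap V(B)$, and its edges must equal both the edges of $A$ and the edges of $B$ among those vertices. When $A\cap B=\emptyset$, the only possibility is $L=\emptyset$, so $|Hom(A,B)|=1$. When $A\cap B\neq\emptyset$, $Hom(A,B)$ contains at least $\emptyset$, so $|Hom(A,B)|\geq 1$ and the denominator never vanishes.

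Next we compute $Ext^1(A,B)$ using the preceding proposition. It gives $Ext^1(A,B)\simeq E(A,B)$, more precisely identifying extension classes with the subsets $e\subseteq E(A,B)$ via $A\to AeB\to B$, and $Ext^1(A,B)=\emptyset$ when $A\cap B\neq\emptyset$. Here the one point needing care is the counting convention. The proof of that proposition shows that distinct subsets $e\neq e'$ give inequivalent sequences, and every short exact sequence has middle term of the form $AeB$. Hence, for disjoint $A,B$, we get $|Ext^1(A,B)|=2^{|E(A,B)|}$, and we should read the statement ``$Ext^1(A,B)\simeq E(A,B)$'' as a bijection with the power set. When $A\cap B\neq\emptyset$, no exact sequence $A\to C\to B$ exists, because $C$ would need $V(C)=V(A)\sqcup V(B)$. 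So $|Ext^1|=0$.

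Combining the two computations: for disjoint $A,B$, the ratio is $2^{|E(A,B)|}/1=2^{\phi(A,B)}$; otherwise it is $0/|Hom(A,B)|=0=2^{-\infty}$. The main obstacle is purely one of conventions: making the power-set interpretation of $Ext^1$ and the meaning of $2^{-\infty}$ explicit. Beyond that, the argument is routine.
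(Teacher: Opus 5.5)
Your proof is correct and follows the paper's argument: the same case split on whether $A\cap B=\emptyset$, with $|Hom(A,B)|=1$ and $|Ext^1(A,B)|=2^{|E(A,B)|}$ in the disjoint case, and $|Ext^1(A,B)|=0$ with $|Hom(A,B)|\geq 1$ otherwise. Your explicit remarks on reading $Ext^1(A,B)\simeq E(A,B)$ as a bijection with the subsets of $E(A,B)$, and on $2^{-\infty}=0$, simply make precise conventions that the paper uses implicitly.
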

\begin{proof}
If $A\cap B\neq \emptyset$, then $|Hom(A,B)|\geq 1$ and $|Ext^1(A,B)|=0$ then 
$$2^{\phi(A,B)}=0=\frac{|Ext^1(A,B)|}{|Hom(A,B)|}.$$
If $A\cap B= \emptyset$ then $|Hom(A,B)|=|\{\emptyset\}|=1$ and $|Ext^1(A,B)|=2^{|E(A,B)|}$ and the equality holds again.
    $\qedhere$
\end{proof}
\subsection{The naive coproduct}
We can define another coproduct for the Hall algebra associated to an undirected labelled graph. We first sought to determine whether the natural coproduct in Hall algebra was compatible, but in some combinatorial contexts where the notion of direct sum makes sense, another coproduct is sometimes used: 
$$\Delta(f)(M,N)=f(M\oplus N).$$
It can be see in \cite{szczesny2011representationsquiversf1} for quiver representations over $\mathbb{F}_1$ or in \cite{eppolito2018protoexactcategoriesmatroidshall} for matroids. Instead of breaking a module in all the possible submodules, it is just breaking among the direct sums. In the graph context the direct sum could be analogue to the disjoint union of graphs.
\begin{definition}\label{defnaivecoproduct}
    Let $H\in X_1(G)$, we denote by $C_1,...,C_k$ the connected components of $H$. Define $$\Delta'(H)=\sum_{I\subseteq \{1,...,k\}}C_I\otimes H\backslash C_I,$$
    where $C_I=C_{i_1}\sqcup ... \sqcup C_{i_p}$ if $I=\{i_1,...,i_p\}$.
\end{definition}
\begin{ex}\label{exampleofprimitivenaivecoproduct}
    If $H$ is a connected subgraph, $\Delta'(H)=H\otimes\emptyset+\emptyset\otimes H$, i.e. $H$ is a primitive element.
    \newline If $K$ is a graph with connected components $C_1, C_2$, then $$\Delta'(K)=K\otimes \emptyset+C_1 \otimes C_2+C_2\otimes C_1+ \emptyset\otimes K.$$
\end{ex} 
\begin{prop}
    The coproduct $\Delta'$ is coassociative and cocommutative.
\end{prop}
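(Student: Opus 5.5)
The plan is to reduce both properties to statements about subsets of the set of connected components. Fix $H\in X_1(G)$ with connected components $C_1,\dots,C_k$. The key observation is that for $I\subseteq\{1,\dots,k\}$, the graph $C_I$ is an induced subgraph of $H$. Moreover, the connected components of $C_I$ are exactly the $C_i$ with $i\in I$, and those of $H\backslash C_I$ are the $C_j$ with $j\notin I$. This holds because removing whole components neither merges nor splits the remaining ones, and there are no edges between distinct components. Thus $\Delta'$ is well defined and stays inside the span of $X_1(G)$.

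\emph{Cocommutativity.} We will use the involution $I\mapsto \{1,\dots,k\}\backslash I$ on subsets. Since $H\backslash C_I=C_{\bar I}$ and $H\backslash C_{\bar I}=C_I$, the term $C_I\otimes H\backslash C_I$ is sent by the flip $\tau$ to $C_{\bar I}\otimes H\backslash C_{\bar I}$. Hence $\tau\circ\Delta'(H)=\Delta'(H)$, and cocommutativity follows by linearity.

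\emph{Coassociativity.} We compute both iterated coproducts on $H$ and index them by ordered set partitions $\{1,\dots,k\}=I_1\sqcup I_2\sqcup I_3$.
\begin{itemize}
\item For $(\Delta'\otimes id)\Delta'(H)=\sum_{I}\Delta'(C_I)\otimes C_{\bar I}$: by the observation above, $\Delta'(C_I)=\sum_{J\subseteq I}C_J\otimes C_{I\backslash J}$. This gives $\sum C_{I_1}\otimes C_{I_2}\otimes C_{I_3}$ with $I_1=J$, $I_2=I\backslash J$, $I_3=\bar I$.
\item For $(id\otimes\Delta')\Delta'(H)=\sum_I C_I\otimes\Delta'(C_{\bar I})$: similarly, $\Delta'(C_{\bar I})=\sum_{J\subseteq \bar I}C_J\otimes C_{\bar I\backslash J}$. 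This gives the same sum, indexed by $I_1=I$, $I_2=J$, $I_3=\bar I\backslash J$.
\end{itemize}
Both reindexings are bijections onto ordered partitions into three (possibly empty) blocks, so the two expressions coincide.

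Counitality with $\epsilon$ is immediate from the terms $I=\emptyset$ and $I=\{1,\dots,k\}$, if needed. The only point requiring care, and the step I expect to be the main (mild) obstacle, is the identification of the components of $C_I$ and $H\backslash C_I$. In particular, the complement must be taken as the induced subgraph, so that it carries exactly the edges of the remaining components; loops and multiple edges lie inside single components and cause no trouble. I would state this identification as a small preliminary lemma and then invoke it in both computations.
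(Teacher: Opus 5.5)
Your proof is correct and takes the same approach as the paper. The paper calls cocommutativity straightforward and gets coassociativity by noting that both iterated coproducts list all ways of splitting $A$ into three disjoint parts. Your preliminary lemma (the components of $C_I$ are exactly the $C_i$ with $i\in I$, and $H\backslash C_I=C_{\bar I}$) makes explicit what the paper leaves implicit: $\Delta'(C_I)$ again runs over the subsets of $I$, so the bijection with ordered three-block partitions is valid.
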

\begin{proof}
The cocommutativity is straightforward. The coassociativity is not much more difficult. For $A\in X_1(G)$, $(\Delta'\otimes id)\circ \Delta'(A)=(id\otimes \Delta')\circ \Delta'(A)$ because is just all the manner to break $A$ into three disjoint parts.
    $\qedhere$
\end{proof}
Even for this new coproduct we must define a twisted multiplication on $\hall'(G)\otimes \hall'(G)$ in order to make it a twisted bialgebra.
\begin{definition}
    For $A,B\in X_1(G)$, we define $\phi'$ by
    \begin{equation*}
    \phi'(A,B)=
    \left\{\begin{array}{lcl}
         0 \mbox{ if }A\cap B=\emptyset,  \\
        -\infty \mbox{ otherwise.}
    \end{array}\right.
    \end{equation*}
\end{definition}
\begin{thm}\label{bigebrecoproduitnaif}
    With the same counit as before, $(\hall'(G),\cdot,\Delta',\emptyset,\epsilon)$ is a $(\Q,2,\phi',\phi')$-bialgebra.
\end{thm}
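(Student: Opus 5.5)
We prove the identity $\Delta'(AB)=\Delta'(A)\cdot\Delta'(B)$ for basis elements $A,B\in X_1(G)$ by comparing both sides term by term; the general case follows by bilinearity. Since $\phi'$ takes only the values $0$ and $-\infty$, the twisted product on $\hall'(G)\otimes\hall'(G)$ reads
$$(A_1\otimes A_2)\cdot(B_1\otimes B_2)=\begin{cases}A_1B_1\otimes A_2B_2 & \text{if } A_1\cap B_2=\emptyset=A_2\cap B_1,\\ 0 & \text{otherwise.}\end{cases}$$
Associativity of this twisted product is the general proposition on products of the form \eqref{deftwist}. The counit axioms hold because $\Delta'(H)$ contains $H\otimes\emptyset+\emptyset\otimes H$ as its only terms with an empty factor.

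\emph{Case $A\cap B\neq\emptyset$.} Then $AB=0$ by \cref{propdefmultiNOL}. Take terms $A_1\otimes A_2$ of $\Delta'(A)$ and $B_1\otimes B_2$ of $\Delta'(B)$, and pick a vertex $v\in A\cap B$. If $v$ lies in $A_1$ and $B_1$, then $A_1B_1=0$. If it lies in $A_2$ and $B_2$, then $A_2B_2=0$. In the two mixed cases the twist $\phi'$ equals $-\infty$. Hence every term of $\Delta'(A)\cdot\Delta'(B)$ vanishes.

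\emph{Case $A\cap B=\emptyset$.} Here $AB=\sum_{e\subseteq E(A,B)}AeB$, so
$$\Delta'(AB)=\sum_{e}\sum_{I}(AeB)_I\otimes AeB\backslash(AeB)_I,$$
where $I$ runs over unions of connected components of $AeB$. On the other side, the twist is $0$ automatically since $A\cap B=\emptyset$. Thus $\Delta'(A)\cdot\Delta'(B)=\sum A_1B_1\otimes A_2B_2$, summed over pairs $(A_1,A_2)$ splitting $A$ by components and $(B_1,B_2)$ splitting $B$ by components. Expanding each product gives terms $A_1e_1B_1\otimes A_2e_2B_2$ with $e_1\subseteq E(A_1,B_1)$ and $e_2\subseteq E(A_2,B_2)$. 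We therefore need a bijection between
\begin{itemize}
\item pairs $(e,U)$, with $U$ a union of components of $AeB$, and
\item quadruples $(A_1,B_1,e_1,e_2)$ with the constraints above.
\end{itemize}
From left to right, set $A_1=A\cap U$ and $B_1=B\cap U$ (induced subgraphs), $e_1=e\cap E(A_1,B_1)$ and $e_2=e\cap E(A_2,B_2)$. Because $U$ is a union of components of $AeB$, no edge of $A$ or $B$ leaves $U$. So $A_1$ is a union of components of $A$, and likewise $B_1$ of $B$. Also no edge of $e$ crosses $U$, so $e=e_1\sqcup e_2$. From right to left, set $e=e_1\sqcup e_2$ and $U=A_1e_1B_1$. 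Then $U$ is a union of components of $AeB$: the only edges between $U$ and its complement would be edges of $A$, of $B$, or of $e$ joining a $1$-part to a $2$-part, and all of these are excluded. The two maps are clearly inverse, and both sides carry coefficient $1$, which proves the identity.

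The main obstacle is the bookkeeping in this bijection. One must check that a component decomposition of $AeB$ restricts to component decompositions of $A$ and $B$ even though $e$ may merge components. Merged components simply force the corresponding parts of $A$ and $B$ onto the same side. Once this is verified carefully, the twist $\phi'$ is exactly what kills the remaining spurious terms.
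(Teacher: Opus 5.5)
Your proposal is correct and follows the paper's own argument. Both reduce to basis elements and dispose of the case $A\cap B\neq\emptyset$ by vanishing. For disjoint $A,B$, both match the terms of $\sum_{e}\Delta'(AeB)$ bijectively with the terms $A_1e_1B_1\otimes A_2e_2B_2$ of $\Delta'(A)\cdot\Delta'(B)$ via $e=e_1\sqcup e_2$.

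Your version is in fact more careful than the paper's in two respects. You check explicitly that a union of components of $AeB$ restricts to unions of components of $A$ and of $B$. You also note that in the non-disjoint case some terms die because $A_1B_1=0$ or $A_2B_2=0$, not because of the twist.
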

\begin{proof}
We could give a proof similar to \cref{thmbialgebraNOL}. We provide a shorter proof, although it is less thorough conceptually. \newline
Let $A,B\in X_1(G)$ with $A\cap B=\emptyset$ otherwise $\Delta'(AB)=\Delta'(A)\Delta'(B)=0$ by the twist that we introduce. \newline We begin by computing $$\Delta'(AB)=\Delta'(\sum_{e\subseteq E(A,B)}AeB)=\sum_{e\subseteq E(A,B)}\Delta'(AeB)=\sum_{e\subseteq E(A,B)}\sum \limits_{\underset{\text{s.t. }O_1\sqcup O_2=AeB}{O_1,O_2}}O_1\otimes O_2.$$
Notice that $O_1$ and $O_2$ give a disjoint decomposition of $A$ in $A_1,A_2$ and of $B$ in $B_1,B_2$.
So this factor will appear  only in the product $(A_1\otimes A_2)*(B_1\otimes B_2)$ that will be in $\Delta'(A)\Delta'(B)$. It proves that every factor of $\Delta'(AB)$ appears in $\Delta'(A)\Delta'(B)$ once. \smallbreak Now consider a factor appearing in $\Delta'(A)\Delta'(B)$, it is given by $$A_1e_1B_1\otimes A_2e_2B_2$$ with $A=A_1\sqcup A_2, B=B_1\sqcup B_2$ and $e_1\subseteq E(A_1,B_1), e_2\subseteq E(A_2,B_2).$ Now define $e=e_1\sqcup e_2$, $O_1=A_1e_1B_1$ and $O_2=A_2e_2B_2$, the factor $A_1e_1B_1\otimes A_2e_2B_2=O_1\otimes O_2$ will appear only in $\Delta'(AeB)$ once. It proves the equality $\Delta'(AB)=\Delta'(A)\Delta'(B)$.
    $\qedhere$
\end{proof}
\begin{prop}
    The set $Con(G)$ of connected nonempty subgraphs of $G$ is a basis of the vector space $Prim'(G)$ of primitive elements for $\Delta'$.
\end{prop}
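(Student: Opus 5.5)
The plan is to argue directly in the basis $X_1(G)$, in the spirit of \cref{propprimitivesLRT}, rather than through the orthogonality criterion. The coproduct $\Delta'$ sends each basis subgraph to a sum of tensors of basis subgraphs. The key point is that distinct subgraphs have disjoint supports under $\Delta'$ apart from the terms $\emptyset\otimes H$ and $H\otimes\emptyset$.

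By \cref{exampleofprimitivenaivecoproduct}, every $H\in Con(G)$ is primitive for $\Delta'$. Since $Con(G)\subseteq X_1(G)$ and $X_1(G)$ is a basis of $\hall'(G)$, the family $Con(G)$ is linearly independent. It therefore remains to show that it spans $Prim'(G)$.

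Let $P=\sum_i \lambda_i H_i$ be primitive, with distinct $H_i\in X_1(G)$ and $\lambda_i\neq 0$. First, $\emptyset$ does not occur in $P$: the coefficient of $\emptyset\otimes\emptyset$ in $\Delta'(P)$ is $\lambda_\emptyset$, while in $\emptyset\otimes P+P\otimes\emptyset$ it is $2\lambda_\emptyset$, so $\lambda_\emptyset=0$.

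Next, suppose some $H_i$ is disconnected, with connected components $C_1,\dots,C_k$ and $k\geq 2$. The term $C_1\otimes H_i\backslash C_1$ appears in $\Delta'(H_i)$ with coefficient $1$, and both tensor factors are nonempty. I claim this term appears in $\Delta'(H_j)$ only when $j=i$. Indeed, every term $O_1\otimes O_2$ of $\Delta'(H_j)$ satisfies $O_1\sqcup O_2=H_j$ as subgraphs: the vertex sets are disjoint, and no edges of $H_j$ join $O_1$ and $O_2$, because these are unions of connected components. Hence $H_j$ is recovered from $(O_1,O_2)$ as their disjoint union, so $j=i$. The coefficient of $C_1\otimes H_i\backslash C_1$ in $\Delta'(P)$ is thus $\lambda_i\neq 0$. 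On the other hand, $\emptyset\otimes P+P\otimes\emptyset$ only contains tensors with an empty factor, which is a contradiction. Hence every $H_i$ is connected and nonempty, so $P$ lies in the span of $Con(G)$.

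The only step needing care is this recovery of $H_j$ from a term $O_1\otimes O_2$. It is what makes the argument simpler than \cref{propprimitivesNOL}, where the genuine coproduct mixes edge sets and forces the change of basis to $S_H$.
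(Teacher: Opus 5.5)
Your proof is correct and follows essentially the same route as the paper. Both arguments rest on the fact that a term $O_1\otimes O_2$ of $\Delta'(H)$ determines $H=O_1\sqcup O_2$, which reduces primitivity of $P$ to that of each basis subgraph occurring in it, and both then use that a disconnected subgraph has a nontrivial term in its coproduct; your version just spells out details the paper leaves implicit, namely excluding $\emptyset$ and checking that $C_1\otimes H_i\backslash C_1$ survives with coefficient $\lambda_i$.
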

\begin{proof}
Let $P$ be a primitive element of $\hall'(G)$, let's write $P=\sum_i \lambda_iA_i$ with $A_i\in X_1(G)$ with all $A_i$ distinct. We have $$\Delta'(P)=\emptyset\otimes P + P \otimes \emptyset=\sum_i\lambda_i\Delta'(A_i).$$
It follows that all the $A_i$ are primitives too because a factor from $\Delta'(A_i)$ allow us to get back all the information of $A_i$. So if two factors can combine, it is because at least two $A_i$'s are equals.

Now it remains to prove that an element of $X_1(G)$ is primitive if and only if it is a connected subgraph. By \cref{exampleofprimitivenaivecoproduct} we saw that a connected subgraph is a primitive element. If a graph is not connected, then by definition there is a non trivial term in its coproduct.
    $\qedhere$
\end{proof}
\begin{prop}
    The set $Con(G)$ generates $\hall'(G)$ as an algebra.
\end{prop}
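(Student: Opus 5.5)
We need to show that $Con(G)$ generates $\hall'(G)$ as an algebra; note that $\hall'(G)$ has the same product as $\hall(G)$. There are two routes. The first is to apply \cref{corgenerate} with the coproduct $\Delta'$. The hypotheses needed there are: the $\N$-grading by number of vertices, finite dimensionality in each degree, connectedness, and a basis of each degree by elements of $X_1(G)$. All of these hold for $\Delta'$. One more ingredient, adjunction between the product and $\Delta'$, fails in general: $\Delta'$ is not the adjoint of the product. So the proof of \cref{corgenerate} does not apply verbatim, and I would instead give a direct triangularity argument.

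I will prove by induction on the pair (number of vertices, number of edges), ordered lexicographically, that every $H\in X_1(G)$ lies in the subalgebra $B$ generated by $Con(G)$.
\begin{itemize}
\item Base cases: $\emptyset$ is the unit, and connected $H$ lie in $Con(G)$.
\item Disconnected case: if $H$ is nonempty and disconnected, write $H=C_1\sqcup\dots\sqcup C_k$ with connected components $C_i$ and $k\geq 2$. Iterating \cref{propdefmultiNOL} gives
$$C_1C_2\cdots C_k=\sum_{e} C_1\,e\,C_2\cdots C_k,$$
where $e$ ranges over subsets of the edges of $G$ joining distinct components. The components are pairwise vertex-disjoint, so no factor vanishes.
\item Triangularity: the term with $e=\emptyset$ is exactly $H$. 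Every other term has the same vertex set as $H$ but strictly more edges. These are arbitrary subgraphs, not necessarily smaller, so I run the induction in the opposite direction on edges.
\end{itemize}

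Concretely, fix a vertex set $V$. I do a descending induction on the number of edges among subgraphs with vertex set $V$, using that this number is bounded by $|E(G[V])|$.
\begin{itemize}
\item Top of the induction: the subgraph with vertex set $V$ and all edges of $G[V]$ either is connected, and so lies in $Con(G)$, or is a disjoint union with no edges of $G$ between its components. In the latter case the product of its components has the single term $e=\emptyset$, so it equals the product exactly.
\item Inductive step: for general $H$ with vertex set $V$, one has
$$H=C_1\cdots C_k-\sum_{e\neq\emptyset}C_1eC_2\cdots C_k.$$
Each subtracted term has vertex set $V$ and more edges than $H$, so it lies in $B$ by the descending induction.
\end{itemize}
Hence $H\in B$, and the claim follows. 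No induction on the number of vertices is needed, since the vertex set is fixed throughout.

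The only delicate point is the order of the induction. Edges increase in the correction terms, so the argument must go downward in edge count within a fixed vertex set.
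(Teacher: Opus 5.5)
Your proof is correct and is essentially the paper's argument. Both realize $H$ as the $e=\emptyset$ term of a product of its pieces via $AB=\sum_{e\subseteq E(A,B)}AeB$ and absorb the remaining terms by induction; the only difference is the induction parameter, since the paper peels off one component $A_1$ and inducts on the number of connected components (any term $A_1e(A\backslash A_1)$ with $e\neq\emptyset$ merges $A_1$ with another component), whereas you multiply all components at once and induct downward on the edge count within a fixed vertex set, which works equally well.
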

\begin{proof}
    We just have to prove that every element $A\in X_1(G)$ can be generated by $Con(G)$. We do an induction on the number of connected components of $A$. If it is $0$ then $A$ is the empty graph and by definition is in the subalgebra generated by $Con(G)$. If it is $1$, $A$ is in $Con(G)$ by definition. If the number of connected components is $k$, let $A_1$ be a connected component and consider the product $A_1 \cdot (A\backslash A_1)$. By definition it is a sum of terms of the form $A_1e(A\backslash A_1)$ with $e\subseteq E(A_1,A\backslash A_1)$. So the sum consists of $A$ and terms that have less than $k$ connected components. Then by induction $A$ is generated by $Con(G)$ and it concludes the proof.
    $\qedhere$
\end{proof} 
\begin{rmk}
    The result of \cref{thmpresentationNOL} can be obtained in a similar way. In conclusion, we observe that two coproducts over $\hall(G)$ yield the same presentation; the primitive elements are not identical in both cases, but they are in a canonical bijection. This may seem surprising at first glance, because when the product and coproduct are adjoint, the set of primitives is minimal (in the sense defined in \cite{Berenstein_2016} theorem 1.2), which is by no means guaranteed in the case of the naive coproduct. We will see that this fact is merely a coincidence and would no longer hold in the case of directed graphs, where the number of primitives is much smaller in the case of the Hall coproduct. 

\end{rmk} 

\subsection{Polyhedral product}
In \cite{bergnerunpublishednote}, an unpublished note, Bergner, Kuhn and Zakharevich pose the following question : Is there a topological space associated to every graph whose cohomology ring reproduces the Hall algebra? Their strongest conjecture predicts that this would be the topological realization of the simplicial set $X_G$. In this section, we partially answer this question by associating with each graph a topological space whose Hall algebra is its cohomology. After a small change in degree, the space found corresponds to that given by the topological realization in the small cases, but we do not know if this holds in general. We follow \cite{bahri2008polyhedralproductfunctormethod} for the introduction to polyhedral products.
    \begin{enumerate}

\item Let $(\underline{X},\underline{x})=\{(X_i,x_i)^n_{i=1}\}$ denote
a set of $CW$--complexes with base-point $x_i$.
\item Let $K$ denote a simplicial complex with $m$ vertices labeled by the set
\newline $[n]=\{1,2,\ldots, n\}$. We assume that every $i \in [n]$ belongs to at least one simplex.
\end{enumerate}
\begin{definition}
     For every $\sigma$ simplex of $K$, let
$$
D(\sigma) =\prod^n_{i=1}Y_i,\quad {\rm where}\quad
Y_i=\left\{\begin{array}{lcl}
X_i &{\rm if} & i\in \sigma,\\
\{x_i\} &{\rm if} & i\in [n]-\sigma,
\end{array}\right.$$ with $D(\emptyset) = \{x_1\} \times \ldots \times \{x_n\}$.

The polyhedral product is defined to be
$$Z(K;(\underline{X},\underline{x}))=\bigcup_{\sigma \in K}
D(\sigma)\subseteq \prod_{i=1}^n X_i.$$ 
\end{definition}
\begin{thm}\cite{bahri2008polyhedralproductfunctormethod}(Theorem 2.35)\label{cohopoly}
    Let $K$ be a simplicial complex with $n$ vertices and let
    \begin{equation*}
        (\underline{X},\underline{x})=\{(X_i,x_i)_{i=1}^n\}
    \end{equation*}
    be a collection of $n$ pointed, connected CW-complexes. If all of the spaces $X_1, . . . , X_n$ are of finite type and have torsion-free cohomology over $\Z$, then there is an isomorphism of algebras
    \begin{equation*}
        \bigotimes_{i=1}^nH^*(X_i;\Z)/I(K)\rightarrow H^*(Z(K;(\underline{X},\underline{x}));\Z).
    \end{equation*}
    where $I(K)$ is the ideal generated by all elements $x_{j_1} \otimes x_{j_2} \otimes ... \otimes x_{j_l}$ for which $x_{j_t} \in \bar{H}^*(X_{j_t} ;\Z)$ and the sequence $J = (j_1 , . . . , j_l )$ is not a simplex of $K$, where $\bar{H}$ means the reduced cohomology.
\end{thm}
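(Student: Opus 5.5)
We will prove the isomorphism by studying the ring map induced by the inclusion
$$\iota : Z(K;(\underline{X},\underline{x}))\hookrightarrow \prod_{i=1}^n X_i .$$
By the Künneth theorem, which applies since every $X_i$ is of finite type with torsion-free cohomology, $H^*(\prod X_i;\Z)\simeq\bigotimes_i H^*(X_i;\Z)$ as rings. The map $\iota^*$ is a ring homomorphism. So it suffices to show two things: $\iota^*$ is surjective, and its kernel is exactly $I(K)$. Working with $\iota^*$ rather than with cochain-level cup products means multiplicativity comes for free. All the work is then additive, and must be natural with respect to $\iota$.

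\textbf{Step 1: a cell decomposition by support.} Give each $X_i$ a CW structure with $x_i$ a $0$-cell, and give $\prod X_i$ the product cell structure. Each $D(\sigma)$ is a subcomplex, so $Z(K)$ is a subcomplex of the product. A product cell $e_1\times\dots\times e_n$ has a \emph{support}: the set of indices $i$ with $e_i\neq x_i$. It lies in $Z(K)$ if and only if its support is a simplex of $K$. Write $\bar C^*(X_i)$ for the reduced cellular cochains, i.e.\ cochains on the cells other than $x_i$. The cellular cochain complex of the product then splits as a direct sum of complexes indexed by the support $S\subseteq[n]$:
$$C^*\Big(\prod_i X_i\Big)=\bigoplus_{S\subseteq[n]}\ \bigotimes_{i\in S}\bar C^*(X_i).$$
This is a splitting of complexes because the differential of a product cell never increases its support, and the cochain coefficient on the basepoint cell vanishes in reduced cochains. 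Under this identification, restriction to $Z(K)$ is the projection onto the summands with $S\in K$.

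\textbf{Step 2: cohomology of the summands.} By Künneth, and using freeness and finite type again, the summand indexed by $S$ has cohomology $\bigotimes_{i\in S}\bar H^*(X_i;\Z)$. Hence
$$H^*(Z(K))\simeq\bigoplus_{S\in K}\ \bigotimes_{i\in S}\bar H^*(X_i),$$
and $\iota^*$ becomes the projection of $\bigoplus_{S\subseteq[n]}$ onto $\bigoplus_{S\in K}$. This map is surjective. Its kernel is $\bigoplus_{S\notin K}\bigotimes_{i\in S}\bar H^*(X_i)$, which is the $\Z$-span of the monomials $x_{j_1}\otimes\dots\otimes x_{j_l}$ with each $x_{j_t}\in\bar H^*(X_{j_t})$ and $\{j_1,\dots,j_l\}\notin K$.

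\textbf{Step 3: the kernel is $I(K)$.} This span is contained in $I(K)$ by definition. Conversely, it is already an ideal. Multiplying such a monomial by a tensor of homogeneous classes only enlarges the support, or produces zero, or (on the unit components) leaves it unchanged. Since $K$ is closed under taking faces, a superset of a non-face is again a non-face. So the span equals $I(K)$, and the map $\bigotimes_i H^*(X_i;\Z)/I(K)\to H^*(Z(K);\Z)$ is an isomorphism of algebras.

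The main obstacle is Step 1. We need the support splitting to be compatible with the differential and with the identification of reduced cochains, and we need the Künneth identification of each summand to be natural under $\iota$. If that bookkeeping becomes delicate, the fallback is to use the stable splitting $\Sigma Z(K)\simeq\Sigma\bigvee_{S\in K}\bigwedge_{i\in S}X_i$ in the same way, to get the additive decomposition naturally in $\iota$.
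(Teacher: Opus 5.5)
The weak point is Step 1, which you flagged yourself, and the justification you give for it is wrong. When $x_i$ is only one of several $0$-cells, the support decomposition of $C^*(\prod_i X_i)$ is not a decomposition into subcomplexes. If $e$ is a $1$-cell of $X_i$ with $\partial e=y-x_i$ and $y\neq x_i$, then $(\delta x_i^*)(e)=x_i^*(\partial e)=-1$, so the basepoint dual $x_i^*$ is not a cocycle. The cellular boundary never increases support, but it can decrease it through such terms $-x_i$. Dually, the coboundary never decreases support but can increase it. So the pieces $\bigotimes_{i\in S}\bar C^*(X_i)$, with $x_j^*$ in the slots $j\notin S$, only form a filtration, not a splitting. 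Reading off $H^*(Z(K))\cong\bigoplus_{S\in K}\bigotimes_{i\in S}\bar H^*(X_i)$ from a filtration would need a degeneration argument for the associated spectral sequence, and you do not give one. The same problem shows up elsewhere: the unit of $H^0(X_j)$ is represented by the augmentation cocycle $\varepsilon_j=\sum_v v^*$ (sum over all $0$-cells), not by $x_j^*$. So your summands do not even match the Künneth decomposition $\bigotimes_i(\Z\cdot 1\oplus\bar H^*(X_i))$ on the nose.

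The repair is easy, and there are two ways to do it. First, you can reduce to CW structures in which $x_i$ is the only $0$-cell. Collapse a maximal tree of the $1$-skeleton, which is possible since $X_i$ is connected, and use that polyhedral products of CW pairs are invariant under pointed homotopy equivalences. Then every $1$-cell is a cycle, $x_i^*=\varepsilon_i$, and your splitting is genuine. Second, you can split on the chain level using the augmentation: $C_*(X_i)=\Z x_i\oplus\ker\varepsilon_i$ as complexes. The pieces $M_S=\bigotimes_{i\in S}\ker\varepsilon_i\otimes\bigotimes_{j\notin S}\Z x_j$ are then subcomplexes, with $C_*(\prod_i X_i)=\bigoplus_S M_S$ and $C_*(D(\sigma))=\bigoplus_{S\subseteq\sigma}M_S$. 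Hence $C_*(Z(K))=\bigoplus_{S\in K}M_S$ is a direct summand, and $H_*(M_S)\cong\bigotimes_{i\in S}\bar H_*(X_i)$ is free of finite type. Dualizing gives your Step 2 with $\varepsilon_j$ in the complementary slots, compatibly with Künneth.

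After either repair, Steps 2 and 3 go through as written. The ideal check in Step 3 is even automatic, since $\ker\iota^*$ is the kernel of a ring map. The paper does not prove this statement; it quotes it from Bahri--Bendersky--Cohen--Gitler. There the additive decomposition, natural in $\iota$, comes from the stable splitting $\Sigma Z(K)\simeq\Sigma\bigvee_{S\in K}\bigwedge_{i\in S}X_i$, which is your fallback. Your cellular route, once repaired, is a more elementary chain-level version of the same idea.
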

Now we apply this theorem to our context of graphs.
\begin{definition}
    Let $G$ be a graph with $H_1,...,H_n$ all the nonempty connected subgraphs of $G$. Define the simplicial complex $K_G$ as follow :
    \begin{enumerate}
        \item $K$ has $n$ vertices $1,...,n$.
        \item $\sigma=\{s_1,...,s_k\}$ is a face of $K$ if for all $i\neq j \in \sigma$, $H_i\cap H_j= \emptyset$.
    \end{enumerate}
    Also define $T_G:=(\underline{X},\underline{x})$ with $X_i=\Sph^{2|V(H_i)|}$ and $x_i$ a point in $X_i$.
\end{definition}
\begin{ex}
    Let $K$ be the graph in \cref{exmulti}, then the associated simplicial complex is
   \begin{center} \begin{tikzpicture}
   \smallbreak
      \coordinate (3) at (0,0);
  \coordinate (4) at (1,-1);
  \coordinate (5) at (2,0);

  \fill[pattern=north east lines, pattern color=black] (3) -- (4) -- (5) -- cycle;
\node[fill=white,draw,circle, scale=1](3) at (0,0){3};
    \node[fill=white,draw,circle,  scale=1](4) at (1,-1){4};
    \node[fill=white,draw,circle, scale=1](5) at (2,0){5};
    \node[draw,circle, scale=0.85](34) at (3,0){34};
    \node[draw,circle, scale=0.85](35) at (1,-2){35};
     \node[draw,circle, scale=0.8](345) at (1,1){345};

    \draw (3)--(5);
    \draw (3)--(4);
    \draw (4)--(5);
    \draw (4)--(35);
    \draw (5)--(34);
\end{tikzpicture}
\end{center}
\smallbreak
and the polyhedral product is given by
\begin{equation*}
    Z(K_K;T_K)=(\Sph^2)^3\times \{*\}^3 \cup \{*\}^2\times \Sph^2\times \Sph^4 \times \{*\}^2 \cup \{*\}\times \Sph^2 \times \{*\}^2 \times \Sph^4 \times \{*\} \cup \{*\}^5 \times \Sph^6  
\end{equation*}
inside $(\Sph^2)^3\times (\Sph^4)^2\times \Sph^6$.
\end{ex}
By \cref{cohopoly} we have the following result.
\begin{thm}\label{thmpolyhedralproductNOL}
    Let $G$ be a graph, then there is an algebra isomorphism
    \begin{equation*}
        \hall(G)_\Z\simeq H^*(Z(K_G;T_G);\Z),
    \end{equation*}
    where $\hall_\Z(G)$ is the Hall algebra of $G$ with coefficients in $\Z$.
    Moreover, this isomorphism respects the grading if we double it.
\end{thm}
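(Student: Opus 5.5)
The plan is to compute both sides as explicit presentations and match them. For the right-hand side, \cref{cohopoly} applies: each $X_i=\Sph^{2|V(H_i)|}$ is a pointed, connected CW-complex of finite type with torsion-free integral cohomology. Hence $H^*(Z(K_G;T_G);\Z)$ is isomorphic to $\bigotimes_{i=1}^n H^*(\Sph^{2|V(H_i)|};\Z)/I(K_G)$.

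Each factor is $\Z[y_i]/(y_i^2)$ with $\deg y_i=2|V(H_i)|$, and $\bar H^*$ of a sphere is spanned by $y_i$. So $I(K_G)$ is generated by the monomials $y_{j_1}\cdots y_{j_l}$ with $(j_1,\dots,j_l)$ not a face of $K_G$. By definition of $K_G$, a set fails to be a face exactly when it contains two indices $j\neq j'$ with $H_j\cap H_{j'}\neq\emptyset$. One subtlety: \cref{cohopoly} speaks of sequences, so repeated indices count as non-faces. This only yields $y_i^2$, which is zero already, and is consistent with the Hall side since $H_i\cap H_i\neq\emptyset$. Every non-face monomial is divisible by a quadratic one, so $I(K_G)$ is generated by $y_jy_{j'}$ for $H_j\cap H_{j'}\neq\emptyset$, including $j=j'$. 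The degrees are even, so the tensor product is graded-commutative and in fact strictly commutative. Therefore
$$H^*(Z(K_G;T_G);\Z)\simeq \Z[y_H\mid H\in Con(G)]/(y_Hy_K\mid H\cap K\neq\emptyset).$$

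For the left-hand side, I invoke \cref{thmpresentationNOL}, which gives exactly the same presentation with $x_H\mapsto S_H$, but over $\Q$. The main obstacle is that the theorem is stated integrally, so I must check that the presentation holds over $\Z$. The map $f:x_H\mapsto S_H$ is well defined over $\Z$, since $S_HS_K=0$ when $H\cap K\neq\emptyset$ by \cref{propdefmultiNOL}.

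Surjectivity and injectivity over $\Z$ I would argue directly, by comparing bases. The quotient ring is a free $\Z$-module with basis the squarefree monomials $x_{H_1}\cdots x_{H_k}$ in pairwise disjoint connected subgraphs. These monomials correspond bijectively to elements $K\in X_1(G)$: take $H_1,\dots,H_k$ to be the connected components of $K$. The image $S_{H_1}\cdots S_{H_k}$ equals $\sum_{e_i\subseteq E(H_i)}\pm H_{1,e_1}\cdots H_{k,e_k}$. Each product of disjoint subgraphs is a sum over sets of added edges between them, by \cref{propdefmultiNOL}, and all coefficients are integers. Among all resulting terms, exactly one has vertex set $V(K)$ and edge set precisely $E(K)$, namely $K$ itself with coefficient $1$: with all $e_i=E(H_i)$ and no edges added between components. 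Every other term with the same vertex set arises either from $e_i=E(H_i)$ for all $i$ with extra edges added between components, or from some $e_i\subsetneq E(H_i)$ with possibly some edges added. In the second case it still misses an edge of $K$ and differs from $K$, but it need not have fewer edges.

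So I would order $X_1(G)$ by vertex set and then by inclusion of edge sets, rather than by number of edges. The terms in the first case contain $E(K)$ strictly, and those in the second case do not contain $E(K)$ at all, so they are never below $K$ in a way that would make the matrix fail to be unitriangular. A cleaner route is to use the ordering on the exact edge set among graphs of fixed vertex set. The transition matrix is then unitriangular, so $f$ is an isomorphism of free $\Z$-modules, hence of rings.

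Finally, under $x_H\mapsto y_H$ the degree $|V(H)|$ of $\hall(G)$ corresponds to $2|V(H)|$, which gives the doubled grading.
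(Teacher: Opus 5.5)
\textbf{Verdict: the integral step fails.} Your overall plan matches the paper's, whose proof is simply to combine \cref{cohopoly} with the presentation \cref{thmpresentationNOL}. Your cohomology computation is correct. You are also right that the statement over $\Z$ needs more than the presentation over $\Q$; the paper does not address this. But the argument you give for the integral step is wrong. The map $f\colon x_H\mapsto S_H$ is \emph{not} surjective over $\Z$, and the claimed unitriangularity does not hold.

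\textbf{Counterexample.} Take $G$ to be a single edge $a$ between vertices $1$ and $2$. Write $[12]$ and $[12a]$ for the subgraph on $\{1,2\}$ without and with $a$. These form a $\Z$-basis of the part of $\hall(G)_\Z$ with vertex set $\{1,2\}$. The only two monomials of that degree map to
\[
f(x_1x_2)=S_1S_2=[12]+[12a],\qquad f(x_{12a})=S_{12a}=[12]-[12a].
\]
This matrix has determinant $-2$, so, for instance, $[12a]$ is not in the image.

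\textbf{Where your ordering argument breaks.} Your ``second case'' terms with no edges added between components have edge set \emph{strictly contained} in $E(K)$. Not containing $E(K)$ therefore does not prevent them from lying below $K$ in the inclusion order. Meanwhile, your ``first case'' terms for a smaller graph lie strictly above it. So both off-diagonal entries are nonzero, and no ordering makes the matrix triangular. In the triangle, the monomials for the one-edge graphs $\{a\}$ and $\{b\}$ even hit each other while having incomparable edge sets. A minor point: the diagonal coefficient is $(-1)^{|E(K)|}$, not $1$.

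\textbf{Fix.} Keep your cohomology side and change the generators. Send $x_H$ to the connected subgraph $H$ itself rather than to $S_H$. The relations still hold: by \cref{propdefmultiNOL}, $HK=0$ when $H\cap K\neq\emptyset$, and $\hall(G)_\Z$ is commutative. Now let $K$ have components $H_1,\dots,H_k$. Then
\[
H_1\cdots H_k=\sum_{F}\bigl(V(K),\,E(K)\sqcup F\bigr),
\]
where $F$ runs over the sets of edges of $G$ joining distinct components. Every term contains $E(K)$, and $K$ itself occurs exactly once, with coefficient $1$.

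So, for a fixed vertex set and with respect to edge-set inclusion, the matrix is genuinely unitriangular. Hence $x_H\mapsto H$ is a $\Z$-algebra isomorphism
\[
\Z[x_H\mid H\in Con(G)]/(x_Hx_K\mid H\cap K\neq\emptyset)\;\xrightarrow{\ \sim\ }\;\hall(G)_\Z.
\]
This is the same induction the paper uses to show that $Con(G)$ generates the algebra in the naive-coproduct subsection. Composing with your presentation of $H^*(Z(K_G;T_G);\Z)$ proves the theorem, including the doubled grading.
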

\begin{ex}
Let us give some examples for some classes of graphs
\leavevmode\\
\begin{center}
\begin{tabular}{|>{\centering\arraybackslash}m{3cm}|>{\centering\arraybackslash}m{6cm}|}
\hline
Graph &Polyhedral product  \\
\hline
\vspace{0.3cm}
\begin{tikzpicture}
\node[draw,circle, scale=.5](1) at (0,0){1};
\node[draw,circle, scale=.5](2) at (0.4,0){2};
\node (3) at (0.8,0){...};
\node[draw,circle, scale=.5](n) at (1.2,0){n};
\end{tikzpicture} \vspace{0.3cm} & $(\Sph^2)^n$\\
\hline
\begin{tikzpicture}
\node[draw,circle, scale=.5](1) at (0,0){1};
\draw[->] (1) edge[loop above] node[above] {\it{\tiny{k loops}}} (1);

\end{tikzpicture} & $\bigvee_{i=0}^{2^k}\Sph^2$ \\
\hline
\vspace{0.3cm}
\begin{tikzpicture}
\node[draw,circle, scale=.5](1) at (0,0){1};
\node[draw,circle, scale=.5](2) at (0.5,0){2};
\draw (1)--(2);
\end{tikzpicture}\vspace{0.3cm} & $(\Sph^2\times \Sph^2)\vee \Sph^4$ \\
\hline
$K_3$ & \vspace{0.3cm}$\begin{aligned}
    &(\Sph^2)^3\times \{*\}^7 \\
    &\cup \Sph^2 \times \{*\}^2 \times \Sph^4 \times \{*\}^6 \\
    &\cup \{*\} \times \Sph^2 \times \{*\}^2 \times \Sph^4 \times \{*\}^5 \\
    &\cup \{*\}^2 \times \Sph^2 \times \{*\}^2 \times \Sph^4 \times \{*\}^4 \\
    & \cup \{*\}^6 \times \Sph^6 \times \{*\}^3 \\
   &\cup \{*\}^7 \times \Sph^6 \times \{*\}^2 \\
   & \cup \{*\}^8 \times \Sph^6 \times \{*\}^1 \\
   & \cup \{*\}^9 \times \Sph^6 
    \end{aligned}$ \vspace{0.3cm}\\
\hline
\end{tabular}
\end{center}
As we can see, even for an easy graph like $K_3$, the associated polyhedral product could be really long but is always simple to calculate.
\end{ex}

\section{Directed graphs}\label{sectionOL}
\subsection{2-Segal sets and Hall algebras of directed graphs}
In this section we investigate the Hall algebra of directed labelled graphs. The main results are \cref{thmbialgebraOL} (twisted bialgebra), \cref{propprimitivesOL} (description of primitives) and \cref{thmpresentationOL} (presentation). A directed graph is the data of $G=(V(G),E(G))$ where $E$ is a set of tuple of vertices. One difference is that the product will no longer be commutative. We will use some notation in common with undirected graphs. In this section $G$ denotes a directed labelled graph.

We first introduce the 2-Segal set associated to a directed labelled graph.
\begin{definition}
We define the simplicial set $X(G)$ to be :
    \begin{itemize}
        \item $X_0(G)=\{ \emptyset\}$.
        \item $X_1(G)$ the set of all subgraphs of $G$.
        \item $X_n(G)=\{(S_1\subseteq...\subseteq S_n=H)~|~H\in X_1(G)$\\ \hspace{1cm}$\text{ and there is no edges from }S_{i+1}\backslash S_i \text{ to }S_i\text{ for all }i\} $.
        \smallbreak
        The face map $d_i:X_n(G) \rightarrow X_{n-1}(G)$ is given by :
        \begin{itemize}
            \item if $i=0$, $d_0(S_1\subseteq...\subseteq S_n\subseteq H)=(S_2\backslash S_1\subseteq...\subseteq S_n\backslash S_1 \subseteq H\backslash S_1)$
            \item for $i\geq 1$, $d_i$ forgets the $i^{th}$ term of the filtration.
        \end{itemize}
        \smallbreak
        The map $s_i:X_n(G)\rightarrow X_{n+1}(G)$ is just given by repeating the $i^{th}$ term.
    \end{itemize}
\end{definition}
It is left to the reader to verify that $X(G)$ is a simplicial set.
\begin{prop}
    The simplicial set $X(G)$ is 2-Segal.
\end{prop}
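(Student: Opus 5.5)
The plan is to check the 2-Segal condition of \cref{def2segal} directly: for $n\geq 3$ and $0\leq i<j\leq n$, the map
$$X_n(G)\rightarrow X_{\{0,\dots,i,j,\dots,n\}}(G)\times_{X_{\{i,j\}}(G)}X_{\{i,\dots,j\}}(G)$$
should be a bijection. It is convenient to view an element $(S_1\subseteq\dots\subseteq S_n=H)$ of $X_n(G)$ as a chain of induced subgraphs of $H$, and to set $S_0=\emptyset$. The restriction to a subset $\{a_0<\dots<a_m\}\subseteq[n]$ is then the chain of successive differences $S_{a_k}\backslash S_{a_0}$ for $k=1,\dots,m$, taken inside $S_{a_m}\backslash S_{a_0}$.

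For the undirected case this is exactly the statement of \cite{bergner20172segalsetswaldhausenconstruction}, namely that $X_\bullet$ is the Waldhausen construction of the proto-exact category $\mathcal{C}_G$. I would therefore reduce to that result by showing that the directed version only adds an acyclicity constraint, and that this constraint is compatible with both gluing and restriction.

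\textbf{Injectivity.} The first component recovers $S_1\subseteq\dots\subseteq S_i$, then $S_j$, and so on up to $S_n=H$. The second component gives the differences $S_k\backslash S_i$ for $i<k<j$, which are induced subgraphs of $S_j\backslash S_i$. Since every $S_k$ is induced in $H$, it is determined by its vertex set, and that vertex set is $V(S_i)\sqcup V(S_k\backslash S_i)$. So the whole chain is determined.

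\textbf{Surjectivity.} Start from compatible data. Define $S_k$ for $i<k<j$ as the induced subgraph of $H$ on $V(S_i)\sqcup V(T_k)$, where $T_k$ are the middle terms of the second component. The compatibility over $X_{\{i,j\}}$ says precisely that the top of the second component is $S_j\backslash S_i$, so the chain is nested.

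It then remains to check the edge condition, which is that there is no edge from $S_{k+1}\backslash S_k$ to $S_k$ for every $k$.
\begin{itemize}
\item For $k<i$ or $k\geq j$ the condition is inherited from the first component. This needs care: the outer simplex is a chain living in $H$, or in $H\backslash S_0$, which is just $H$. For $d_0$-type restrictions with $i=0$ one also has to use that "no edge from $A$ to $B$" is unaffected by deleting vertices outside $A\cup B$.
\item For $i\leq k<j$, the set $S_{k+1}\backslash S_k$ lies in $S_j\backslash S_i$. Edges landing in $S_k\backslash S_i$ are excluded by the second component.
\item Edges landing in $S_i$ are excluded because $S_{k+1}\backslash S_k\subseteq S_j\backslash S_i$, and the first component forbids edges from $S_j\backslash S_i$ to $S_i$. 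This is the face $d$ that forgets the indices strictly between $i$ and $j$.
\end{itemize}

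I must also verify the converse, namely that restrictions of an element of $X_n(G)$ land in $X_\bullet(G)$: a union of forbidden-edge-free layers is again forbidden-edge-free. This is the same small lemma: if there is no edge from $S_{l+1}\backslash S_l$ to $S_l$ for all $l$, then there is no edge from $S_b\backslash S_a$ to $S_a$ for $a<b$. Its proof is an induction that splits $S_b\backslash S_a$ into layers and uses $S_a\subseteq S_l$.

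The main obstacle is bookkeeping the orientation condition across $d_0$, where vertices are removed. The key lemma I would isolate first is: for induced subgraphs $A\subseteq B\subseteq C$ of $G$, there is no edge from $C\backslash A$ to $A$ if and only if there is no edge from $C\backslash B$ to $B\backslash A$, from $B\backslash A$ to $A$, and from $C\backslash B$ to $A$. Once this is in hand, both directions of the bijection follow mechanically.
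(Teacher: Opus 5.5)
Your proof is correct and is essentially the paper's: you get uniqueness because each $S_k$ is induced in $H$ and so is determined by $V(S_i)\sqcup V(S_k\backslash S_i)$, you get existence by taking $S_k$ to be the induced subgraph of $H$ on $V(S_i)\sqcup V(T_k)$, and your bullet points spell out the edge-condition check that the paper only asserts with ``we easily verify''. One correction: your closing ``key lemma'' is false in the ``only if'' direction (take $A=\emptyset$, $B$ the single vertex $1$, and $C$ on vertices $1,2$ with an edge $2\to 1$), since the correct equivalence needs ``no edge from $C\backslash B$ to $B$ and none from $B\backslash A$ to $A$'' on the left-hand side, but your explicit verification never uses the false direction, so the argument stands.
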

\begin{proof}
Let $n\geq 3$ and $0\leq i < j \leq n$.
    Take $$(S_1\subseteq ... \subseteq S_i \subseteq S_j\subseteq ... \subseteq S_n)\in X_{\{0,...,i,j,...n\}} \text{ and } (Q_{i+1}\subseteq ... \subseteq Q_j)\in X_{\{i,...j\}}$$ such that the image of this two elements in $X_{\{i,j\}}$ is equal, i.e. $S_j\backslash S_i=Q_j$. Now for every $i< k < j$ construct the graph $S_k$ with the set of vertices $V(Q_k)\sqcup V(S_i)$ and with edges such that it is a induced subgraph of $S_n$. Now we easily verify that $(S_1\subseteq ... \subseteq S_n)\in X_n$ and that this is the unique element which image is the two previous elements under the maps
    $$X_{\{0,...,n\}}\rightarrow X_{\{0,...,i,j,...,n\}} \text{ and } X_{\{0,...,n\}}\rightarrow X_{\{i,...,j\}}.$$
    
    $\qedhere$
\end{proof}
\begin{definition}
For $A$ and $B$ two disjoint elements of $X_1(G)$, we denote by $E(A,B)$ the set of edges going from $A$ to $B$.
\end{definition}
As previous we will use the following graph $G$ as a running example in this section.
 \begin{figure}[ht] \centering\begin{tikzpicture}
    \node[draw,circle, scale=1](1) at (0,0){1};
    \node[draw,circle, scale=1](2) at (1,1){2};
    \node[draw,circle, scale=1](3) at (2,0){3};
    
    \draw[->] (1)--(3);
    \draw[->] (1)--(2);
    \draw[->] (2)--(3);
    
 \end{tikzpicture} 
 \caption{A directed graph $G$}\label{exdirected}
\end{figure}
\begin{prop}
    Let $A,B\in X_1(G)$, the Hall algebra associated to $X(G)$ has the following multiplication :
    \begin{equation*} A B = \left\{
    \begin{array}{ll}
        \sum_{e \subseteq E(A,B)} Ae B ~~\mbox{ if }A\cap B=\emptyset,\\
        0 ~~\mbox{ otherwise. }
    \end{array}
\right.\end{equation*}
\end{prop}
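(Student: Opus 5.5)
Since the Hall product is defined by \cref{producthallalgebra}, I would compute the structure constants $c_{A,B}^H$ directly. That is, for fixed $A,B\in X_1(G)$, I would describe all $x\in X_2(G)$ with $d_2(x)=A$ and $d_0(x)=B$, and then read off $d_1(x)=H$. The argument mirrors the proof of \cref{propdefmultiNOL} for undirected graphs; the only new ingredient is the orientation condition in the definition of $X_2(G)$.

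The first step is to unwind the face maps on $X_2(G)$. An element of $X_2(G)$ is a pair $(S_1\subseteq S_2=H)$ with $S_1$ an induced subgraph of $H$ and no edge of $G$ going from $H\backslash S_1$ to $S_1$. By the definition of the face maps, $d_0(S_1\subseteq H)=H\backslash S_1$ and $d_1(S_1\subseteq H)=H$. The face $d_2$ forgets the top term, so $d_2(S_1\subseteq H)=S_1$. I would check this indexing convention against the reduced case $X_1$, where $d_0$ and $d_1$ of a one-term flag give $\emptyset$. Hence $c_{A,B}^H$ counts the pairs $(A\subseteq H)\in X_2(G)$ with $H\backslash A=B$.

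The second step is to classify such $H$. Since $V(H)=V(A)\sqcup V(B)$, a nonzero coefficient forces $A\cap B=\emptyset$; otherwise no $H$ exists and the product is $0$. When $A\cap B=\emptyset$, the edges of $H$ split into four groups: edges inside $A$, which are fixed since $A$ is induced in $H$; edges inside $B$, which are fixed since $B=H\backslash A$ is induced; edges from $A$ to $B$; and edges from $B$ to $A$. The last group must be empty, because $X_2(G)$ forbids edges from $S_2\backslash S_1=B$ to $S_1=A$. The edges from $A$ to $B$ form an arbitrary subset $e\subseteq E(A,B)$. So $H=AeB$, and conversely each $AeB$ gives a valid element of $X_2(G)$: $A$ and $B$ are induced in it and there are no edges from $B$ to $A$.

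Finally, each such $H$ determines its flag $(A\subseteq H)$ uniquely, so every coefficient equals $1$. This gives the formula. The only point requiring care is the direction convention, namely that the forbidden edges are those from $B$ to $A$, so that the surviving ones lie in $E(A,B)$ as defined (edges from $A$ to $B$). Everything else is routine.
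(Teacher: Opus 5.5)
Your strategy is the paper's. It is the same argument as for \cref{propdefmultiNOL}: find all $(A\subseteq H)\in X_2(G)$ with $H\backslash A=B$, observe that $H=AeB$, and note that all coefficients are $1$. Your face-map bookkeeping ($d_0=H\backslash S_1$, $d_1=H$, $d_2=S_1$) and your split of the edges of $H$ into four groups are correct.

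There is, however, one misstatement that breaks a step as written. You unwind $X_2(G)$ as requiring that no edge \emph{of $G$} goes from $H\backslash S_1$ to $S_1$. Under that reading your converse claim fails: $(A\subseteq AeB)$ would lie in $X_2(G)$ only when $E_G(B,A)=\emptyset$. The proposition would then be false. In \cref{exdirected}, take $A$ to be the subgraph $2\to 3$ and $B$ the vertex $1$. Since $E(A,B)=\emptyset$, the formula gives $AB=A\emptyset B\neq 0$. The $G$-reading instead gives $AB=0$, because $1\to 2$ and $1\to 3$ are edges of $G$.

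The condition defining $X_n(G)$ has to be read inside the top subgraph: no edge \emph{of $H=S_n$} goes from $S_{i+1}\backslash S_i$ to $S_i$. This is also the reading behind the paper's remark that a basis element is primitive if and only if it is strongly connected. Your other steps already use this reading: the claim that the group of $H$-edges from $B$ to $A$ is empty, and the check that $AeB$ has no edges from $B$ to $A$. So once you correct the first sentence of the unwinding, the argument is complete.
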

\begin{ex}
    For the graph $G$ (\cref{exdirected}) we have
    \begin{align*}
    \vcenter{\hbox{\begin{tikzpicture}
    \node[draw,circle, scale=.5](1) at (0,0){1};
    \end{tikzpicture}}}
    \hspace{0.5cm}
    \cdot 
    \vcenter{\hbox{\begin{tikzpicture}
    \node[draw,circle, scale=.5](2) at (1,1){2};
    \node[draw,circle, scale=.5](3) at (2,0){3};
    \draw[->] (2)--(3);
    \end{tikzpicture}}}
    =
    \vcenter{\hbox{\begin{tikzpicture}
    \node[draw,circle, scale=.5](1) at (0,0){1};
    \node[draw,circle, scale=.5](2) at (1,1){2};
    \node[draw,circle, scale=.5](3) at (2,0){3};
    \draw[->] (2)--(3);
    \end{tikzpicture}}}
    + 
    \vcenter{\hbox{\begin{tikzpicture}
    \node[draw,circle, scale=.5](1) at (0,0){1};
    \node[draw,circle, scale=.5](2) at (1,1){2};
    \node[draw,circle, scale=.5](3) at (2,0){3};
    \draw[->] (1)--(2);
    \draw[->] (2)--(3);
    \end{tikzpicture}}}
    +
    \vcenter{\hbox{\begin{tikzpicture}
    \node[draw,circle, scale=.5](1) at (0,0){1};
    \node[draw,circle, scale=.5](2) at (1,1){2};
    \node[draw,circle, scale=.5](3) at (2,0){3};
    \draw[->] (1)--(3);
    \draw[->] (2)--(3);
    \end{tikzpicture}}}
    +
    \vcenter{\hbox{\begin{tikzpicture}
    \node[draw,circle, scale=.5](1) at (0,0){1};
    \node[draw,circle, scale=.5](2) at (1,1){2};
    \node[draw,circle, scale=.5](3) at (2,0){3};
    \draw[->] (1)--(3);
    \draw[->] (1)--(2);
    \draw[->] (2)--(3);
    \end{tikzpicture}}}
    \end{align*}
\end{ex}
\begin{definition}
    For $A\in X_1(G)$ define $\Gamma(A)$ to be the set of induced subgraphs $H$ of $A$ such that 
    $E(A\backslash H,H)=\emptyset$.
\end{definition}
The usual Hall coproduct (\cref{coproducthallalgebra}) is reinterpreted as follow.
\begin{prop}
    For $A\in X_1(G)$,
    $$\Delta(A)=\sum_{H\in \Gamma(A)}H\otimes A\backslash H.$$
\end{prop}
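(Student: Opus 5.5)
We unwind \cref{coproducthallalgebra} for the 2-Segal set $X(G)$ of a directed graph. By definition,
$$\Delta(A)=\sum_{(a,b)\in X_1(G)\times X_1(G)}c_{a,b}^A\,a\otimes b,$$
where $c_{a,b}^A$ counts the elements $x\in X_2(G)$ with $d_1(x)=A$, $d_2(x)=a$ and $d_0(x)=b$. So the plan is to describe the set $\{x\in X_2(G)~|~d_1(x)=A\}$ explicitly, compute its two remaining faces, and check that each pair $(a,b)$ arises at most once.

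\textbf{Step 1: elements of $X_2(G)$ over $A$.} An element of $X_2(G)$ is a chain $(S_1\subseteq S_2)$ with $S_1$ an induced subgraph of $S_2$ and no edge from $S_2\backslash S_1$ to $S_1$. By the conventions used for undirected graphs, $d_1$ forgets $S_1$ and returns $S_2$. Hence $d_1(x)=A$ forces $S_2=A$. The data of $x$ is then exactly an induced subgraph $H:=S_1$ of $A$ with $E(A\backslash H,H)=\emptyset$, that is, $H\in\Gamma(A)$. Conversely, every $H\in\Gamma(A)$ gives an element $(H\subseteq A)\in X_2(G)$.

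\textbf{Step 2: the faces.} The map $d_2$ forgets $S_2$, so $d_2(H\subseteq A)=H$. The map $d_0$ is given by $d_0(H\subseteq A)=A\backslash H$, the induced subgraph of $A$ on the remaining vertices. This is the analogue of the description $\Delta(A)=\sum_{K\subseteq A}K\otimes A\backslash K$ in the undirected case. Thus the element $H\in\Gamma(A)$ contributes the term $H\otimes A\backslash H$.

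\textbf{Step 3: no multiplicities.} The first tensor factor $H$ determines the element $x=(H\subseteq A)$. Distinct $H\in\Gamma(A)$ therefore give distinct pairs $(H,A\backslash H)$, so each coefficient $c^A_{H,A\backslash H}$ equals $1$. All other coefficients $c^A_{a,b}$ vanish, since any $x$ with $d_1(x)=A$ is of the form in Step 1. This gives
$$\Delta(A)=\sum_{H\in\Gamma(A)}H\otimes A\backslash H.$$

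The only real point to check is the ordering convention in Step 2: the face $d_2$ gives the sub-object $H$ placed on the left and $d_0$ gives the quotient $A\backslash H$ placed on the right. This must match the multiplication formula $AB=\sum_{e\subseteq E(A,B)}AeB$, where edges go from $A$ to $B$. For $(H\subseteq A)$ the allowed edges between the two parts go from $H$ to $A\backslash H$, which is consistent with reading $A$ as a term of the product $H\cdot(A\backslash H)$. The adjunction $\langle ab,c\rangle=\langle a\otimes b,\Delta(c)\rangle$ then serves as a cross-check on this convention.
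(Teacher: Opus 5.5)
Your proposal is correct and follows the paper's approach. The paper gives no separate proof here and treats the formula, as in the undirected case ("dual to the one for the multiplication"), as a direct unwinding of the definition: elements of $X_2(G)$ with $d_1=A$ are exactly the chains $(H\subseteq A)$ with $H\in\Gamma(A)$, with faces $d_2=H$ and $d_0=A\backslash H$, and every coefficient equals $1$.
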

\begin{ex}
    Let's compute the coproduct of $G$ from \ref{exdirected}
    \begin{align*}
       \Delta(\vcenter{\hbox{\begin{tikzpicture}
   \node[draw,circle, scale=.5](1) at (0,0){1};
   \node[draw,circle, scale=.5](2) at (0.5,0.5){2};
    \node[draw,circle, scale=.5](3) at (1,0){3};
    \draw[->] (1)--(3);
    \draw[->] (1)--(2);
    \draw[->] (2)--(3);
    \end{tikzpicture}}})
   =
   \emptyset \otimes 
    \vcenter{\hbox{\begin{tikzpicture}
    \node[draw,circle, scale=.5](1) at (0,0){1};
    \node[draw,circle, scale=.5](2) at (0.5,0.5){2};
    \node[draw,circle, scale=.5](3) at (1,0){3};
    \draw[->] (1)--(3);
    \draw[->] (1)--(2);
    \draw[->] (2)--(3);
    \end{tikzpicture}}}
    + 
    \vcenter{\hbox{\begin{tikzpicture}
    \node[draw,circle, scale=.5](1) at (0,0){1};
    \end{tikzpicture}}}
    \otimes 
    \vcenter{\hbox{\begin{tikzpicture}
    \node[draw,circle, scale=.5](2) at (0.5,0.5){2};
     \node[draw,circle, scale=.5](3) at (1,0){3};
    \draw[->] (2)--(3);
    \end{tikzpicture}}}
    +
    \vcenter{\hbox{\begin{tikzpicture}
    \node[draw,circle, scale=.5](1) at (0,0){1};
    \node[draw,circle, scale=.5](2) at (0.5,0.5){2};
    \draw[->] (1)--(2);
    \end{tikzpicture}}}
    \otimes
     \vcenter{\hbox{\begin{tikzpicture}
    \node[draw,circle, scale=.5](3) at (1,0){3};
    \end{tikzpicture}}}
    +
    \vcenter{\hbox{\begin{tikzpicture}
    \node[draw,circle, scale=.5](1) at (0,0){1};
   \node[draw,circle, scale=.5](2) at (0.5,0.5){2};
    \node[draw,circle, scale=.5](3) at (1,0){3};
    \draw[->] (1)--(3);
    \draw[->] (1)--(2);
    \draw[->] (2)--(3);
    \end{tikzpicture}}}
    \otimes \emptyset
    \end{align*}
\end{ex}
\subsection{Twisted bialgebra and presentation}
\begin{definition}
Let's define $\tilde{\phi}$ to be 
    \begin{equation*}
\begin{array}{cccc}
    &\tilde{\phi} : V\times V& \rightarrow &\Z\sqcup \{-\infty\}   \\
     &(v_1,v_2) &\mapsto& \left\{
    \begin{array}{ll}
        -\infty ~~\mbox{ if }v_1=v_2\\
        |E(v_1,v_2)| ~~\mbox{ otherwise. }
    \end{array}\right.
\end{array}
\end{equation*}
It defines a bilinear map of monoids $\phi : \N V \times \N V \rightarrow \Z\sqcup \{-\infty\}$. The only difference  with \cref{deftwistNOL} here is that the set of edges is directed.
\end{definition}
\begin{thm}\label{thmbialgebraOL}
    Let $G$ be a directed labelled graph, then $(\hall(G),\cdot,\Delta,\emptyset,\epsilon)$ is a $(\Q,2,\phi,\phi)$-bialgebra.
\end{thm}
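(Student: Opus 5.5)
We follow the same strategy as for \cref{thmbialgebraNOL}, but by a direct comparison of both sides rather than through a proto-exact category. Fix $A,B\in X_1(G)$. If $A\cap B\neq\emptyset$, then $AB=0$. Every term $(A_1\otimes A_2)\cdot(B_1\otimes B_2)$ of $\Delta(A)\cdot\Delta(B)$ then satisfies $A_1\cap B_2\neq\emptyset$ or $A_2\cap B_1\neq\emptyset$, since a common vertex of $A$ and $B$ lies in $A_1$ or $A_2$ and in $B_1$ or $B_2$. The case where it lies in $A_1\cap B_1$ or $A_2\cap B_2$ is already killed by the untwisted product. The remaining cases are killed by $2^{-\infty}=0$, so both sides vanish. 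From now on we assume $A\cap B=\emptyset$ and expand
$$\Delta(AB)=\sum_{e\subseteq E(A,B)}\ \sum_{H\in\Gamma(AeB)} H\otimes (AeB)\backslash H .$$

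\textbf{Step 1: decompose a term of $\Delta(AB)$.} A pair $(e,H)$ determines $A_1=A\cap H$, $A_2=A\backslash A_1$, $B_1=B\cap H$ and $B_2=B\backslash B_1$. The condition $H\in\Gamma(AeB)$ forces $A_1\in\Gamma(A)$ and $B_1\in\Gamma(B)$, so $A_1\otimes A_2$ is a term of $\Delta(A)$ and $B_1\otimes B_2$ is a term of $\Delta(B)$. Moreover $H=A_1e_1B_1$ and $(AeB)\backslash H=A_2e_2B_2$, where $e_1=e\cap E(A_1,B_1)$ and $e_2=e\cap E(A_2,B_2)$. Hence every term of $\Delta(AB)$ has the form $A_1e_1B_1\otimes A_2e_2B_2$, which is exactly the shape of a term of the untwisted product $(A_1\otimes A_2)(B_1\otimes B_2)=A_1B_1\otimes A_2B_2$.

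\textbf{Step 2: count multiplicities.} Fix $A_1,A_2,B_1,B_2$, $e_1$ and $e_2$. On the right-hand side the element $A_1e_1B_1\otimes A_2e_2B_2$ appears once in the untwisted product, hence with coefficient $2^{\phi(A_1,B_2)+\phi(A_2,B_1)}$ after twisting. On the left-hand side we count the sets $e\subseteq E(A,B)$ restricting to $e_1$ and $e_2$ such that $A_1e_1B_1\in\Gamma(AeB)$. The only free edges in $e$ are those joining $A_1$ to $B_2$ and those joining $A_2$ to $B_1$, and one must check which of them are compatible with the $\Gamma$-condition "no edge from the complement into $H$". I expect this to give the square/frame count of the undirected case: $2^{|E(A_1,B_2)|+|E(A_2,B_1)|}$ fillings, with the same vanishing when vertices overlap, which matches the twist exactly.

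\textbf{Main obstacle.} The delicate point is the orientation bookkeeping in Step 2. One has to pin down precisely which of the sets $E(A_1,B_2)$, $E(B_2,A_1)$, $E(A_2,B_1)$ and $E(B_1,A_2)$ are allowed by $e\subseteq E(A,B)$ and by the condition $E((AeB)\backslash H,H)=\emptyset$. One also has to check that the exponent $\phi(a,d)+\phi(b,c)$ of \cref{defusualtwist} matches, with the directed convention for $E(\cdot,\cdot)$, the set of free edges. In particular, frames in which the edges on one side are forbidden must receive weight $0$ rather than $1$. I would first verify this on the running example of \cref{exdirected} with $A=1$ and $B=23$, writing out both sides completely. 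If the conventions line up, the general case follows from the bijection of Step 1 and this count. As in \cref{thmbialgebraNOL}, one can alternatively phrase the count through crosses, squares and frames, where the filling count is precisely the number of admissible choices of edges for the middle term $C$.
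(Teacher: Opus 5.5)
You have a genuine gap: the multiplicity count in Step 2 is left as an expectation, and the expected value is wrong. Your treatment of the case $A\cap B\neq\emptyset$ and your Step 1 are correct. They coincide with the paper's argument, which also compares both sides directly and then counts, for a fixed tensor $A_1e_1B_1\otimes A_2e_2B_2$, the sets $e\subseteq E(A,B)$ producing it. That count is the whole content of the theorem.

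Write $e=e_1\sqcup e_2\sqcup e_3\sqcup e_4$ with $e_3\subseteq E(A_1,B_2)$ and $e_4\subseteq E(A_2,B_1)$. The condition $A_1e_1B_1\in\Gamma(AeB)$ says that $AeB$ has no edge from $A_2\sqcup B_2$ into $A_1\sqcup B_1$. Check each type of edge:
\begin{itemize}
\item edges of $A$ from $A_2$ to $A_1$, and of $B$ from $B_2$ to $B_1$, are excluded because $A_1\in\Gamma(A)$ and $B_1\in\Gamma(B)$;
\item no edge from $B_2$ to $A_1$ can lie in $e\subseteq E(A,B)$;
\item every edge of $e_4$ goes from the complement into $A_1e_1B_1$.
\end{itemize}
Hence $e_4=\emptyset$ is forced while $e_3$ is free. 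The multiplicity is $2^{|E(A_1,B_2)|}$, not $2^{|E(A_1,B_2)|+|E(A_2,B_1)|}$; the forbidden side contributes the factor $1$, not $0$.

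Consequently your argument, once completed, proves that $\hall(G)$ is a $(\Q,2,\phi,\phi_0)$-bialgebra. Here $\phi_0(X,Y)=0$ if $X\cap Y=\emptyset$ and $-\infty$ otherwise, which is the form $\phi'$ used for the naive coproduct in \cref{bigebrecoproduitnaif}. The overlap cases are still killed, by $\phi$ on $(A_1,B_2)$ and by $\phi_0$ on $(A_2,B_1)$.

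With $\phi'=\phi$, as in the statement, the identity is false as soon as $G$ has an edge joining two distinct vertices. Let that edge be $u\to w$, take $A=u$ and $B=w$, and let $uw$ and $\underline{uw}$ be the subgraphs on $\{u,w\}$ without and with the edge(s). Then $AB=uw+\underline{uw}+\dots$, and the cut $w\otimes u$ occurs only in $\Delta(uw)$. So $w\otimes u$ has coefficient $1$ in $\Delta(AB)$. In $\Delta(A)\cdot\Delta(B)$, however, $(\emptyset\otimes u)\cdot(w\otimes\emptyset)=2^{\phi(u,w)}(w\otimes u)$, with coefficient at least $2$. The test you proposed on \cref{exdirected}, with $A=1$ and $B$ the edge $2\to 3$, shows the same discrepancy: $B\otimes A$ has coefficient $1$ on the left and $2^2$ on the right.

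The paper's own proof asserts exactly the count you anticipated: it claims each choice of $e_3\subseteq E(A_1,B_2)$ and $e_4\subseteq E(A_2,B_1)$ gives a valid $e$, without checking the $\Gamma$-condition. It therefore contains the same error. The correct twist has a single cross term, as in Green's theorem. The later uses of the twist (\cref{formulprimitivegeneral}) only need a bilinear twist vanishing in degree $0$, which $\phi_0$ is.
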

\begin{proof}
Let $A,B\in X_1(G)$, if $A B=0$ then $A\cap B\neq \emptyset$ and for each term $A_1\otimes A_2$ of $\Delta(A)$ and $B_1\otimes B_2$ in $\Delta(B)$ either $A_1B_1=0$, $A_2B_2=0$, $A_1\cap B_2\neq \emptyset$ or $A_2\cap B_1 \neq \emptyset$. It shows that if $\Delta(A B)=0$ then $\Delta(A)\cdot \Delta(B)=0$ too.

Now we focus on the case $AB\neq 0$. So
$$\Delta(AB)=\sum_{e\subseteq E(A,B)}\Delta(AeB)=\sum_{e\subseteq E(A,B)}\sum_{O\in \Gamma(AeB)}O\otimes AeB\backslash O.$$
As in the proof of \cref{bigebrecoproduitnaif}, $O$ gives us a partition of $A$ in $A_1$, $A_2$ and the same for $B$. So this term will appear only in the multiplication $A_1B_1\otimes A_2B_2$ exactly once. 

For the other side,
\begin{align*}
    \Delta(A)\cdot\Delta(B)=&\sum_{O_A\in \Gamma(A), O_B\in \Gamma(B)}(O_A\otimes A\backslash O_A)\cdot(O_B\otimes B\backslash O_B)
\end{align*}
it is clear that if all the four terms above are not disjoints, it cannot appear in $\Delta(AB)$, this is why the twist remove this terms. Now, a term of $\Delta(A)\cdot \Delta(B)$ is of the form 
$$A_1e_1B_1\otimes A_2e_2 B_2.$$
This term appears in $\Delta(AeB)$ where $e:=e_1\sqcup e_2$ exactly once. But for how many $e\subseteq E(A,B)$ is it the case ? Each choice $e_3\subseteq (A_1,B_2)$ and $e_4(A_2,B_1)$ gives a valid $e:=e_1\sqcup e_2 \sqcup e_3 \sqcup e_4$, and these are the only possibilities. So a term $A_1e_1B_1\otimes A_2e_2B_2$ appears exactly $2^{|E(A_1,B_2)|+|E(A_2,B_1)|}$ that is the coefficient given by the twist.
$\qedhere$
\end{proof}
\subsection{Primitive elements}
\begin{definition}
    If $G$ is a directed graph we denote by $\bar{G}$ the undirected associated graph. We say that $G$ is strongly connected if for every vertices $a$ and $b$ of $G$, there exists an oriented path from $a$ to $b$. An undirected graph is said to be strongly orientable if there exists a choice of orientation for each edge such that the resulting directed graph is strongly connected.
\end{definition}
\begin{rmk}
    An element of $X_1(G)$ is primitive if and only if it is strongly connected. Indeed to be primitive for a basis element means that there is no non trivial cut of the graph that is exactly the same as being strongly connected.
\end{rmk}
\begin{definition}
    Let $G$ be a directed graph and $H$ an induced subgraph of $G$. If $C$ is an orientation of $\bar{H}$, we denote by $H_C$ the directed graph whose vertices are $V(H)$ and whose edges are those of $C$ (it may no longer be a subgraph of $G$). Furthermore, we denote by $H_{C\cap H}$ the directed graph whose vertices are $V(H)$ and whose edges are given by $C\cap E(H)$. If $C$ is a strongly connected orientation, we say that $H_{C\cap H}$ is induced by a strongly connected orientation.
    
    For $V\subseteq V(G)$ we denote by $G_{V}$ the induced subgraph of $G$ over the vertex set $V$.
\end{definition}
\begin{ex}
    Let's consider the graph $G$ (\cref{exdirected}) and choose $C$ to be the clockwise orientation of $\bar{G}$ :
    \begin{align*}
    G_C=\vcenter{\hbox{
    \begin{tikzpicture}
    \node[draw,circle, scale=.5](1) at (0,0){1};
    \node[draw,circle, scale=.5](2) at (1,1){2};
    \node[draw,circle, scale=.5](3) at (2,0){3};
    \draw[->] (3)--(1);
    \draw[->] (1)--(2);
    \draw[->] (2)--(3);
    \end{tikzpicture}}}
    \end{align*}
    then 
    \begin{align*}
        G_{C\cap E(G)}= \vcenter{\hbox{
    \begin{tikzpicture}
    \node[draw,circle, scale=.5](1) at (0,0){1};
    \node[draw,circle, scale=.5](2) at (1,1){2};
    \node[draw,circle, scale=.5](3) at (2,0){3};
    \draw[->] (1)--(2);
    \draw[->] (2)--(3);
    \end{tikzpicture}}}.
    \end{align*}
    If we denote by $\tilde{C}$ the counter clockwise orientation, then 
    \begin{align*}
        G_{\tilde{C}}=\vcenter{\hbox{
    \begin{tikzpicture}
    \node[draw,circle, scale=.5](1) at (0,0){1};
    \node[draw,circle, scale=.5](2) at (1,1){2};
    \node[draw,circle, scale=.5](3) at (2,0){3};
    \draw[->] (1)--(3);
    \draw[->] (2)--(1);
    \draw[->] (3)--(2);
    \end{tikzpicture}}}
    \end{align*}
    and 
    \begin{align*}
        G_{\tilde{C}\cap E(G)}=\vcenter{\hbox{
    \begin{tikzpicture}
    \node[draw,circle, scale=.5](1) at (0,0){1};
    \node[draw,circle, scale=.5](2) at (1,1){2};
    \node[draw,circle, scale=.5](3) at (2,0){3};
    \draw[->] (1)--(3);
    \end{tikzpicture}}}.
    \end{align*}
\end{ex}
\begin{lem}
   Let $P$ be a directed graph and denote by $C_1,...,C_k$ the strongly connected components of $P$. For $i\neq j\in \{0,...,n\}$ there exists at least one cut $A\otimes B$ of $P$ such that $C_i$ is in $A$ and $C_j$ in $B$ or $C_j$ is in $A$ and $C_i$ in $B$.
\end{lem}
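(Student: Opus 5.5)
The plan is to use the condensation of $P$. Recall that a cut $A\otimes B$ of $P$ is a term of $\Delta(P)$. That is, $A\in\Gamma(P)$ is an induced subgraph and $B=P\backslash A$, with no edge going from $B$ to $A$. Consider the directed graph $\mathcal{C}(P)$ whose vertices are the strongly connected components $C_1,\dots,C_k$, with an arrow $C_s\to C_t$ whenever $P$ has an edge from a vertex of $C_s$ to a vertex of $C_t$ ($s\neq t$). This quotient graph is acyclic: a directed cycle through several components would make all of them mutually reachable, contradicting maximality of strongly connected components.

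Now fix $i\neq j$. Since $C_i$ and $C_j$ are distinct components, it cannot be that each is reachable from the other in $P$. Without loss of generality, assume $C_i$ is not reachable from $C_j$ (otherwise swap the roles, which is why the statement allows either orientation). Define $B$ to be the induced subgraph of $P$ on all vertices reachable by an oriented path from a vertex of $C_j$, including $C_j$ itself. Let $A:=P\backslash B$.

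The key verifications are as follows.
\begin{enumerate}
\item $B$ is a union of strongly connected components. If one vertex of a component is reachable from $C_j$, then all vertices of that component are.
\item $C_j\subseteq B$ by construction. Also $C_i\subseteq A$, because no vertex of $C_i$ is reachable from $C_j$.
\item There is no edge from $B$ to $A$. If $u\in B$ and $u\to w$ is an edge, then $w$ is reachable from $C_j$, so $w\in B$. Hence $A\in\Gamma(P)$ and $A\otimes B$ is a genuine term of $\Delta(P)$.
\end{enumerate}
Note that the orientation condition for a cut is that there are no edges from $P\backslash A$ to $A$. Our $B$ is closed under out-neighbours, which is exactly what is needed.

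The only delicate point, though it is mild, is justifying that reachability cannot hold in both directions between distinct components. This follows directly from the definition of strongly connected components as equivalence classes of mutual reachability. Everything else is a direct check. The index range $\{0,\dots,n\}$ in the statement should be read as $\{1,\dots,k\}$.
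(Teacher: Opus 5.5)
Your proof is correct and rests on the same idea as the paper's: put $C_i$ on the side closed under predecessors and $C_j$ on the side closed under successors, using that two distinct strongly connected components cannot be mutually reachable. The paper builds $A$ and $B$ component by component (components with a path into $A$ go to $A$, those reachable from $B$ go to $B$, the rest go anywhere), whereas you take $B$ to be the forward closure of $C_j$ in one step, which is a canonical instance of that construction and makes the absence of edges from $B$ to $A$ immediate; your WLOG (``$C_i$ not reachable from $C_j$'') is also the precise form of the paper's looser ``only edges going from $C_i$ to $C_j$''.
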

\begin{proof}
Suppose without loss of generality that there are only edges going from $C_i$ to $C_j$, we then want to find a cut $A\otimes B$ such that $C_i$ is in $A$ and $C_j$ in $B$. We give a construction step by step. At the beginning $A$ is constitute of $C_i$ and $B$ of $C_j$.  If a connected component $C_n$ has a path going to $A$ put it in $A$, if there is a path going from $B$ to $C_n$ put it in $B$. If it is not in the two previous situation put it where you want. At the end $A\otimes B$ is a regular cut of $P$, indeed, all the vertices have been considered, a strongly connected component cannot be in the two first situations at the same time otherwise there is a contradiction about the strongly connected components. It is straightforward to verify that there is no edges from $B$ to $A$. 
$\qedhere$
\end{proof}
\begin{prop}\label{prop_orientation_alors_reste_non_nul}
    A non strongly connected subgraph $P$ of $G$ is induced by a strongly connected orientation if and only if for each non trivial cut $A_1\otimes A_2$ appearing in its coproduct, $E_P(A_1,A_2)\subsetneq E_G(A_1,A_2)$, i.e., for each cut there exists at least one edge from $A_1$ to $A_2$ in $G$ that is not in $P$.
\end{prop}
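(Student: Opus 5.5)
The plan is to prove both implications directly from the definitions, using a single explicit orientation. Write $V:=V(P)$ and work inside the induced subgraph $G_V$. Recall that $P$ is induced by a strongly connected orientation exactly when there is a strongly connected orientation $C$ of $\bar{G_V}$ with $P=G_{V,C\cap E(G_V)}$. Concretely, an edge of $G_V$ lies in $P$ iff $C$ orients it in its original $G$-direction. A nontrivial cut $A_1\otimes A_2$ of $P$ is a partition $V=V(A_1)\sqcup V(A_2)$ into nonempty parts with $A_1\in\Gamma(P)$, i.e. with no edge of $P$ going from $A_2$ to $A_1$.

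\emph{($\Rightarrow$)} Let $C$ be a strongly connected orientation with $P=G_{V,C\cap E(G_V)}$, and let $A_1\otimes A_2$ be a nontrivial cut of $P$. Since $C$ is strongly connected and both parts are nonempty, $C$ contains an edge $f$ directed from $A_2$ to $A_1$. Consider the $G$-direction of the underlying edge of $f$. If it also goes from $A_2$ to $A_1$, then $C$ agrees with $G$ on it, so it lies in $P$; this contradicts $A_1\in\Gamma(P)$. Hence in $G$ it is an edge from $A_1$ to $A_2$ which $C$ reverses, so it is not in $P$. Therefore $E_P(A_1,A_2)\subsetneq E_G(A_1,A_2)$.

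\emph{($\Leftarrow$)} Define the orientation $C$ of $\bar{G_V}$ by keeping the $G$-direction of every edge of $P$ and reversing every edge of $G_V$ not in $P$. By construction $C\cap E(G_V)=E(P)$, so it only remains to show that $C$ is strongly connected. Suppose it is not. Then, as in the lemma on strongly connected components, there is a nonempty proper subset $S\subset V$ with no $C$-edge leaving $S$: take, for example, a sink component of the condensation of $C$.

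Set $A_2:=G_S$ and $A_1:=G_{V\setminus S}$. Every edge of $P$ is a $C$-edge, so there is no $P$-edge from $A_2$ to $A_1$. Thus $A_1\otimes A_2$ is a nontrivial cut appearing in $\Delta(P)$. The hypothesis then gives a $G$-edge from $A_1$ to $A_2$ that is not in $P$. By construction $C$ reverses it, producing a $C$-edge from $S$ to $V\setminus S$, which is a contradiction. Hence $C$ is strongly connected.

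The only delicate point is the bookkeeping between $G$-edges, $C$-edges and $P$-edges, especially with multiple edges, which are handled edge by edge. The existence of the set $S$ in a non-strongly-connected digraph is the one place where the preceding lemma on strongly connected components is used. The hypothesis that $P$ is not strongly connected plays no role in the argument.
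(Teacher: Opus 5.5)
Your proof is correct and follows the paper's route: the forward direction is the paper's crossing-edge argument, stated directly rather than by contradiction, and the converse uses the same orientation $C$, which keeps the edges of $P$ and reverses the remaining edges of $G_{V(P)}$. The only difference is the final check that $C$ is strongly connected, where your sink-component argument is more complete than the paper's brief appeal to the lemma separating strongly connected components of $P$; one small slip is that $A_1$ and $A_2$ should be the induced subgraphs of $P$ (not of $G$) on $V\setminus S$ and $S$, since the tensor factors of $\Delta(P)$ are induced subgraphs of $P$.
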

\begin{proof}
Suppose that $P$ is induced by $C$ a strongly connected orientation and let $A_1\otimes A_2$ be a non trivial cut of $P$. Suppose that $E_P(A_1,A_2)= E_G(A_1,A_2)$, because $P_C$ is strongly connected there is at least one edge from $A_2$ to $A_1$, because of the previous equality this edge is chosen among edges in $G$ that are already from $A_2$ to $A_1$. So in $P$ this edge must appear because the orientation agree. So in $P$ the cut $A_1\otimes A_2$ cannot appear.
Conversly, suppose that for each non trivial cut $A_1\otimes A_2$ of $P$ there exists at least another edge in $G$ from $A_1$ to $A_2$. Construct an orientation such that each edges in $P$ is chosen with the same orientation and others edges of $G$ not in $P$ are chosen with the converse orientation. The direct result is that $(G_{V(P)})_{C\cap P}=P$. Now it remains to check that $C$ is a strongly connected orientation. Indeed using the previous lemma and according to the manner of choosing orientations, all strongly connected components of $P$ are connected in $(G_{V(P)})_C$.
$\qedhere$ 
\end{proof}
\begin{definition}
    For $P$ a subgraph of $G$ we define $S_P$ to be the element of $\hall(G)$ given by
    $$S_P:=\sum_{E(P)\subseteq E \subseteq E(G_{V(P)})} (-1)^{|E|}P_E.$$
\end{definition}
\begin{ex}
    Again for the graph $G$ (\cref{exdirected}),
    \begin{align*}
        S_{
    \begin{tikzpicture}
    \node[draw,circle, scale=.3](1) at (0,0){1};
    \node[draw,circle, scale=.3](2) at (0.3,0.3){2};
    \node[draw,circle, scale=.3](3) at (0.6,0){3};
    \draw[->] (2)--(3);
    \end{tikzpicture}}= 
    -
    \vcenter{\hbox{
    \begin{tikzpicture}
    \node[draw,circle, scale=.5](1) at (0,0){1};
    \node[draw,circle, scale=.5](2) at (0.5,0.5){2};
    \node[draw,circle, scale=.5](3) at (1,0){3};
    \draw[->] (2)--(3);
    \end{tikzpicture}}}
    +
    \vcenter{\hbox{
    \begin{tikzpicture}
    \node[draw,circle, scale=.5](1) at (0,0){1};
    \node[draw,circle, scale=.5](2) at (0.5,0.5){2};
    \node[draw,circle, scale=.5](3) at (1,0){3};
    \draw[->] (2)--(3);
    \draw[->] (1)--(3);
    \end{tikzpicture}}}
    +
    \vcenter{\hbox{
    \begin{tikzpicture}
    \node[draw,circle, scale=.5](1) at (0,0){1};
    \node[draw,circle, scale=.5](2) at (0.5,0.5){2};
    \node[draw,circle, scale=.5](3) at (1,0){3};
    \draw[->] (2)--(3);
    \draw[->] (1)--(2);
    \end{tikzpicture}}}
    -
    \vcenter{\hbox{
    \begin{tikzpicture}
    \node[draw,circle, scale=.5](1) at (0,0){1};
    \node[draw,circle, scale=.5](2) at (0.5,0.5){2};
    \node[draw,circle, scale=.5](3) at (1,0){3};
    \draw[->] (2)--(3);
    \draw[->] (1)--(3);
    \draw[->] (1)--(2);
    \end{tikzpicture}}}.
    \end{align*}
\end{ex}
\begin{prop}
    The set $\{S_P | P \in X_1(G)\}$ is a basis of the vector space $\hall(G)$.
\end{prop}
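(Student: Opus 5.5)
The plan is a unitriangularity argument, exactly as for the undirected case, but with the order on edges reversed. In the directed setting $S_P$ is a sum over subgraphs $P_E$ with the same vertex set $V(P)$ and edge sets $E$ satisfying $E(P)\subseteq E\subseteq E(G_{V(P)})$. So every term of $S_P$ has the same vertices as $P$ and at least as many edges. Moreover, the only term with exactly $|E(P)|$ edges is $P$ itself, with coefficient $(-1)^{|E(P)|}=\pm 1$.

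I will order $X_1(G)$ first by vertex set, grouping subgraphs with the same vertex set together in any order between groups. Within each group, I order by decreasing number of edges, breaking ties arbitrarily. Then write the transition matrix expressing $S_P$, for $P\in X_1(G)$, in the basis $X_1(G)$ of $\hall(G)$. The matrix is block diagonal, with one block per vertex set $V\subseteq V(G)$, since $S_P$ only involves subgraphs on $V(P)$. Inside a block, $S_P=(-1)^{|E(P)|}P+\sum \pm P_E$, where each $P_E$ has strictly more edges than $P$ and therefore comes strictly earlier in the chosen order. Each block is thus triangular with $\pm1$ on the diagonal.

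It follows that the matrix is invertible over $\Z$, hence over $\Q$. Since there are exactly $|X_1(G)|$ elements $S_P$, they form a basis. They even give a $\Z$-basis of $\hall(G)_\Z$.

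There is no real obstacle. The only point needing care is that the ordering of edge counts is the opposite of the undirected case, where the terms of $S_H$ had fewer edges. One must therefore order each vertex block by decreasing number of edges rather than increasing.
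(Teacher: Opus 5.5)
Your proposal is correct and follows the paper's argument. The paper also shows that the transition matrix from $X_1(G)$ to $\{S_P\}$ is triangular with $\pm 1$ on the diagonal. It orders subgraphs by number of vertices and then by (increasing) number of edges, and calls the matrix "lower triangular" to absorb the reversed edge direction that you handle by ordering each vertex block by decreasing edge count.
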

\begin{proof}
Order the elements of $X_1(G)$ by lexicographic order on the number of vertices and the number of edges and notice that in this basis the transition matrix is lower triangular with only $\pm 1$'s on the diagonal.
$\qedhere$
\end{proof}
\begin{prop}
    If $P$ is a subgraph of $G$ induced by a strongly connected orientation, then $S_P$ is a primitive element.
\end{prop}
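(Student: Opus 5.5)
The plan is to imitate the proof given for undirected graphs: expand $\Delta(S_P)$ term by term and show that every non-trivial tensor $M\otimes N$ comes with total coefficient zero.

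Write $V:=V(P)$ and $\mathcal{E}:=\{E~|~E(P)\subseteq E\subseteq E(G_V)\}$, so that
$$\Delta(S_P)=\sum_{E\in\mathcal{E}}(-1)^{|E|}\sum_{H\in\Gamma(P_E)}H\otimes P_E\backslash H.$$
A term $M\otimes N$ occurring here is determined by a partition $V=V_1\sqcup V_2$ together with edge sets $E(M)$ and $E(N)$. The graph $P_E$ produces $M\otimes N$ (exactly once) if and only if three things hold:
\begin{itemize}
\item $E$ restricted to $V_1$ is $E(M)$ and $E$ restricted to $V_2$ is $E(N)$;
\item $E$ contains no edge from $V_2$ to $V_1$;
\item $E$ contains an arbitrary subset of the edges of $G$ from $V_1$ to $V_2$, subject to containing $E(P)\cap E_G(V_1,V_2)$.
\end{itemize}

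Fix $M\otimes N$ with $V_1,V_2$ both nonempty. For any $E$ contributing to it, the edges of $E(P)$ inside $V_1$ and $V_2$ lie in $E(M)$ and $E(N)$, and $E(P)$ has no edge from $V_2$ to $V_1$. Hence $G_V[V_1]\otimes G_V[V_2]$, with the edges of $P$, is a non-trivial cut $A_1\otimes A_2$ of $P$. Since the statement concerns non strongly connected $P$, or if $P$ is strongly connected there are no such cuts at all, we can apply \cref{prop_orientation_alors_reste_non_nul}: there is at least one edge $f\in E_G(V_1,V_2)\setminus E_P(V_1,V_2)$.

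The contributing sets $E$ are then exactly
$$E=E(M)\sqcup E(N)\sqcup E_P(V_1,V_2)\sqcup Q,\qquad Q\subseteq E_G(V_1,V_2)\setminus E_P(V_1,V_2),$$
and this free set of edges is nonempty. Their signed sum is therefore $(-1)^{c}\sum_{Q}(-1)^{|Q|}=0$, exactly as in the undirected computation. One subtlety is that the internal edge sets of $M$ and $N$ must be consistent, meaning they contain the edges of $P$ and are contained in those of $G_V$; otherwise the term does not occur and there is nothing to cancel.

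The remaining terms are those with $V_1=\emptyset$ or $V_2=\emptyset$. They give $\emptyset\otimes S_P+S_P\otimes\emptyset$, since $\emptyset$ and the whole graph always lie in $\Gamma(P_E)$. If $P$ is strongly connected, there are no non-trivial cuts of any $P_E\supseteq P$, so the conclusion is immediate in that case.

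The main point to check carefully is the identification of non-trivial cuts $V_1\sqcup V_2$ of some $P_E$ with non-trivial cuts of $P$. Since $E\supseteq E(P)$, any cut of $P_E$ is a cut of $P$, which is precisely what lets us invoke \cref{prop_orientation_alors_reste_non_nul} to guarantee a missing forward edge $f$ for every such partition.
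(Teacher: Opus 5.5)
Your proof is correct and follows essentially the same route as the paper: any cut of a term $P_E$ is a cut of $P$, \cref{prop_orientation_alors_reste_non_nul} provides a nonempty set of free edges of $G$ from $V_1$ to $V_2$, and the alternating binomial sum over that set cancels every non-trivial tensor. Your version is more careful than the paper's, which only computes $\langle \Delta(S_P),A_1\otimes A_2\rangle$ with $A_1,A_2$ the pieces of $P$, whereas you treat an arbitrary tensor $M\otimes N$ whose factors may carry edges of $G_{V(P)}$ not in $P$; the cancellation is the same.
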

\begin{proof}
Notice that in the sum $S_P$, every cut of an element of this sum is also a cut of the graph $P$. It is therefore sufficient to verify that each cut of P is compensated by the cuts of the rest of the sum. If $P$ is strongly connected then any element of the sum is strongly connected and so $S_P$. If not, let $A_1\otimes A_2$ be a  non trivial cut of $P$. Terms where this factor appears differ from $P$ by adding edges from $A_1$ to $A_2$. We have seen in \cref{prop_orientation_alors_reste_non_nul} that this choice is nonempty. Denote by $e(P,A_1,A_2)$ this nonempty set of edges. In conclusion
$$\langle \Delta(S_P),A_1\otimes A_2\rangle=\sum_{E\subseteq e(P,A_1,A_2)}(-1)^{|E(P)|+|E|}=\sum_{k=0}^n \binom{n}{k}(-1)^{k+|E(P)|}=0,$$
where $n:= |e(P,A_1,A_2)|>0$.
$\qedhere$
\end{proof}
\begin{ex}
   For the graph $G$ (\cref{exdirected}), there is 5 primitive elements induced by a strongly connected orientation :
   \begin{align*}
    & \begin{tikzpicture}
    \node[draw,circle, scale=.5](1) at (0,0){1};
    \end{tikzpicture}
    ,
    \begin{tikzpicture}
    \node[draw,circle, scale=.5](2) at (0,0){2};
    \end{tikzpicture}
    ,
    \begin{tikzpicture}
    \node[draw,circle, scale=.5](3) at (0,0){3};
    \end{tikzpicture}
    ,\\
    &S_{\begin{tikzpicture}
    \node[draw,circle, scale=.3](1) at (0,0){1};
    \node[draw,circle, scale=.3](2) at (0.3,0.3){2};
    \node[draw,circle, scale=.3](3) at (0.6,0){3};
    \draw[->] (2)--(3);
    \draw[->] (1)--(2);
    \end{tikzpicture}}=\vcenter{\hbox{
    \begin{tikzpicture}
    \node[draw,circle, scale=.5](1) at (0,0){1};
    \node[draw,circle, scale=.5](2) at (0.5,0.5){2};
    \node[draw,circle, scale=.5](3) at (1,0){3};
    \draw[->] (2)--(3);
    \draw[->] (1)--(2);
    \end{tikzpicture}}}
    -
    \vcenter{\hbox{
    \begin{tikzpicture}
    \node[draw,circle, scale=.5](1) at (0,0){1};
    \node[draw,circle, scale=.5](2) at (0.5,0.5){2};
    \node[draw,circle, scale=.5](3) at (1,0){3};
    \draw[->] (2)--(3);
    \draw[->] (1)--(2);
    \draw[->] (1)--(3);
    \end{tikzpicture}}}
    ,\\
    &S_{\begin{tikzpicture}
    \node[draw,circle, scale=.3](1) at (0,0){1};
    \node[draw,circle, scale=.3](2) at (0.3,0.3){2};
    \node[draw,circle, scale=.3](3) at (0.6,0){3};
    \draw[->] (1)--(3);
    \end{tikzpicture}}=-\vcenter{\hbox{
    \begin{tikzpicture}
    \node[draw,circle, scale=.5](1) at (0,0){1};
    \node[draw,circle, scale=.5](2) at (0.5,0.5){2};
    \node[draw,circle, scale=.5](3) at (1,0){3};
    \draw[->] (1)--(3);
    \end{tikzpicture}}}
    +
    \vcenter{\hbox{
    \begin{tikzpicture}
    \node[draw,circle, scale=.5](1) at (0,0){1};
    \node[draw,circle, scale=.5](2) at (0.5,0.5){2};
    \node[draw,circle, scale=.5](3) at (1,0){3};
    \draw[->] (2)--(3);
    \draw[->] (1)--(3);
    \end{tikzpicture}}}
    +
    \vcenter{\hbox{
    \begin{tikzpicture}
    \node[draw,circle, scale=.5](1) at (0,0){1};
    \node[draw,circle, scale=.5](2) at (0.5,0.5){2};
    \node[draw,circle, scale=.5](3) at (1,0){3};
    \draw[->] (1)--(3);
    \draw[->] (1)--(2);
    \end{tikzpicture}}}
    -
    \vcenter{\hbox{
    \begin{tikzpicture}
    \node[draw,circle, scale=.5](1) at (0,0){1};
    \node[draw,circle, scale=.5](2) at (0.5,0.5){2};
    \node[draw,circle, scale=.5](3) at (1,0){3};
    \draw[->] (2)--(3);
    \draw[->] (1)--(3);
    \draw[->] (1)--(2);
    \end{tikzpicture}}}.
   \end{align*}
\end{ex}
\begin{prop}\label{propprimitivesOL}
    The set $\{S_P | P \text{ induced by a strongly connected orientation }\}$ is a basis of the vector space $Prim(G)$.
\end{prop}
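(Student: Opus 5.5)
We already know that $S_P$ is primitive whenever $P$ is induced by a strongly connected orientation, and that $\{S_P \mid P\in X_1(G)\}$ is a basis of $\hall(G)$. The family in the statement is therefore linearly independent and lies in $Prim(G)$. It remains to show that it spans $Prim(G)$. We copy the strategy of \cref{propprimitivesNOL}.

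Let $P\in Prim(G)$ and write $P=\sum_j\lambda_jS_{H_j}+\sum_k\mu_kS_{K_k}$, where every $H_j$ is induced by a strongly connected orientation, no $K_k$ is, and all coefficients are nonzero. We must show the second sum is empty. The first sum is primitive, so $Q:=\sum_k\mu_kS_{K_k}$ is primitive as well. As in the undirected case, $\langle Q,\emptyset\cdot\emptyset\rangle=0$ shows that no $K_k$ is empty. A nonempty strongly connected subgraph is induced by its own orientation, so no $K_k$ is strongly connected either.

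By \cref{prop_orientation_alors_reste_non_nul}, each $K_k$ then has a nontrivial cut $A_1\otimes A_2$ with $E_{K_k}(A_1,A_2)=E_G(A_1,A_2)$. In other words, $K_k$ already contains every edge of $G$ from $A_1$ to $A_2$, and it contains no edge from $A_2$ to $A_1$.

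To isolate one term, choose $\tilde K$ among the $K_k$ with the \emph{minimal} number of edges. Minimal is the right choice here, because each $S_{K}=\sum_{E(K)\subseteq E\subseteq E(G_{V(K)})}(-1)^{|E|}K_E$ contains only graphs with \emph{at least} as many edges as $K$. Take the cut $A_1\otimes A_2$ of $\tilde K$ given above, and set $I:=\tilde K\cap A_1$ and $\bar I:=\tilde K\cap A_2$ as induced subgraphs. We test $Q$ against $I\otimes\bar I$. By adjunction,
$$0=\langle \Delta(Q),I\otimes \bar I\rangle=\langle Q,I\bar I\rangle .$$
The product $I\bar I=\sum_{e\subseteq E_G(A_1,A_2)}IeI'$ is the sum of the graphs on $V(\tilde K)$ that restrict to $I$ and $\bar I$, have no edges from $A_2$ to $A_1$, and carry an arbitrary set of edges from $A_1$ to $A_2$. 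By the choice of the cut, $\tilde K$ is the term with all of these edges, that is, the one with $e=E_G(A_1,A_2)$.

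The main obstacle is showing that $\langle Q,I\bar I\rangle\neq 0$. Each $K_k$ contributes to this pairing the signed count of graphs $L$ with $K_k\subseteq L$ (same vertices) and $L$ of the form $IeI'$.

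For $K_k$ with $V(K_k)=V(\tilde K)$, such an $L$ exists only if $K_k$ has no edge from $A_2$ to $A_1$ and its restrictions to $A_1$ and $A_2$ are contained in $I$ and $\bar I$. Since $L$ has no $A_2\to A_1$ edges, $L$ must agree with $I$, $\bar I$ and be free on $A_1\to A_2$ edges containing $E_{K_k}(A_1,A_2)$. This gives an alternating binomial sum, which vanishes unless $E_{K_k}(A_1,A_2)=E_G(A_1,A_2)$. In that case the only surviving $L$ is $\tilde K$ itself, and $K_k\subseteq\tilde K$. By minimality of $|E(\tilde K)|$ and the distinctness of the $K_k$, this forces $K_k=\tilde K$.

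Hence the pairing equals $\pm\tilde\mu\neq0$, which contradicts $0=\langle Q,I\bar I\rangle$. Therefore all $\mu_k=0$, and $P$ lies in the span of the $S_H$ with $H$ induced by a strongly connected orientation.

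The delicate bookkeeping is the alternating-sum cancellation for the other $K_k$ in the previous paragraph. If it fails, I would instead induct on the number of edges, subtracting off $\tilde K$ and checking the remaining $K_k$ cut by cut.
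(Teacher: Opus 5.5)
Your proof is correct and is essentially the paper's argument. You pick a non-SCO term $\tilde K$ with the minimal number of edges, take a cut $A_1\otimes A_2$ of $\tilde K$ that already contains every edge of $G$ from $A_1$ to $A_2$, and show that the coefficient of $A_1\otimes A_2$ in $\Delta(Q)$ is nonzero. You phrase this through the adjoint pairing $\langle Q, I\bar I\rangle$ and spell out the alternating-sum cancellation for the other $K_k$, which the paper only asserts, so your closing fallback is not needed.
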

\begin{proof}
Let $K$ be a primitive element of $\hall(G)$. Express $K$ in the $S_P$ basis. 
$$K=\sum_{P_k \text{ induced by SCO}} \lambda_k S_{P_k} + \sum_{Q_k \text{ not induced by a SCO}} \mu_k S_{Q_k},$$
where SCO means strongly connected orientation, $\lambda_k$ and $\mu_k$ are non zero and all $P_k$ and $Q_k$ are different. As $K$ and the first sum are primitive it implies that $\sum_{Q_k} \mu_k S_{Q_k}$ is primitive too. Among all the $Q_k$'s take the one with lower number of vertices and among graph with this same minimal number of vertices choose one with a minimal number of edges, denote it by $Q_0$. As $Q_0$ is not induced by a strongly connected orientation by the \cref{prop_orientation_alors_reste_non_nul} there exists a cut $A_1\otimes A_2$ of $Q_0$ such that all edges from $A_1$ to $A_2$ in $G$ are in $Q_0$. The term $A_1\otimes A_2$ will appear in the coproduct of $Q_0$ and cannot be compensated by any other term of the coproduct of any $S_{Q_k}$ as there is no other edges in $G$ and $Q_0$ has a minimal number of edges. This proves that the sum of $S_{Q_k}$'s cannot be primitive and so all $\mu_k$ are zero. 
$\qedhere$
\end{proof}

\begin{rmk}
    $\hall(G)$ is $\N$-graded by the number of vertices (after forgetting the labels), so it is generated by primitive elements (\cref{corgenerate}).
\end{rmk}
\begin{cor}\label{cordimensionprimitives}
The dimension of primitives in each degree depends only on the undirected graph $\bar{G}$.
\end{cor}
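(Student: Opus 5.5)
By \cref{propprimitivesOL}, $\dim Prim(G)_d$ equals the number of subgraphs $P$ of $G$ with $|V(P)|=d$ that are induced by a strongly connected orientation. The plan is to count these objects and show that the count depends only on $\bar{G}$. The natural route is a bijection between such $P$ and pairs $(V,C)$, where $V\subseteq V(G)$ has $|V|=d$ and $C$ is a strongly connected orientation of the undirected graph $\bar{G}_V$ (the induced subgraph of $\bar{G}$ on $V$, with its multiple edges and loops). The set of such pairs is visibly determined by $\bar{G}$ alone.

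The bijection sends $(V,C)$ to $(G_V)_{C\cap E(G_V)}$, i.e.\ the graph on $V$ whose edges are exactly those edges of $G_V$ whose orientation in $G$ agrees with $C$. By definition this graph is induced by a strongly connected orientation, and every such $P$ arises this way, so the map is surjective. It is also injective. The vertex set $V=V(P)$ is recovered from $P$. For each edge $\epsilon$ of $\bar{G}_V$, its orientation in $C$ is recovered as follows: $C$ orients $\epsilon$ as in $G$ exactly when the corresponding edge of $G$ lies in $E(P)$, and otherwise $C$ orients it oppositely. Because edges are labelled, this reconstruction is valid even with multiple edges. One should check that loops cause no ambiguity. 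A loop has the same orientation either way, so strictly $C$ does not determine it. To handle this, I would adopt the convention that loops are oriented as in $G$, so that they always lie in $P$.

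Hence the dimension of $Prim(G)_d$ equals $\sum_{V\subseteq V(\bar G),\,|V|=d}\#\{\text{strongly connected orientations of }\bar{G}_V\}$, which depends only on $\bar G$. The main obstacle is the injectivity step, namely checking that the map $C\mapsto C\cap E(G_V)$ loses no information. The point is that $C$ and $E(G_V)$ orient the same labelled edge set, so agreement or disagreement on each edge determines $C$ completely. The loop and multi-edge conventions are the only places where care is needed.
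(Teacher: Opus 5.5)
Your proposal is correct and follows the paper's proof, which is just the observation that, by \cref{propprimitivesOL}, $\dim Prim(G)_I$ is governed by the strongly connected orientations of the induced subgraphs of $\bar{G}$. Your bijection $(V,C)\mapsto (G_V)_{C\cap E(G_V)}$ spells out the step the paper leaves implicit: it is injective because edges are labelled, and since it is defined one vertex set at a time it also gives the finer $\N V$-graded version used later for $P^m_G$.

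One caution on loops, which concerns how \cref{propprimitivesOL} has to be read rather than your deduction, and does not affect the conclusion. A single vertex carrying any subset of its loops is already a primitive basis element, so your displayed formula undercounts as soon as some $G_V$ has loops. To fix this, give each loop two orientations (one agreeing with $G$, one not, as in the evaluation $T(0,2)$) instead of fixing it as in $G$.
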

\begin{proof}
The dimension of $Prim(G)_I$ where $I\in \N V$ only depends on the different orientations of subgraphs of $\bar{G}$ with vertices corresponding to $I$.
$\qedhere$
\end{proof}
Now, using the twisted bialgebra structure and the primitive elements previously determined we can give a presentation of the Hall algebra of $G$.
\begin{thm}\label{thmpresentationOL}
    We have the following isomorphism :
    \begin{equation*}
        \hall(G)\simeq \Q\langle x_P~|~P \text{ induced by a strongly connected orientation}\rangle /R
    \end{equation*}
    where $R$ are the following relations :
    \begin{enumerate}
        \item $x_{P_1}...x_{P_k}=0$ if there exists $i \neq j$ such that $V(P_i)\cap V(P_j)\neq \emptyset$.
        \item $x_Px_Q=x_Qx_P$ if there are no edges in $G$ between $P$ and $Q$ (both directions).
    \end{enumerate}
    \end{thm}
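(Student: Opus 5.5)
Define $f$ from the free algebra $\Q\langle x_P\rangle$ to $\hall(G)$ by $x_P\mapsto S_P$, and prove three things in turn: the relations hold, $f$ is surjective, and the induced map from the quotient is injective.

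\emph{Relation (1).} Each term of $S_{P_i}$ has vertex set $V(P_i)$. The product of basis elements with intersecting vertex sets is zero, and any product of terms drawn from $S_{P_1},\dots,S_{P_k}$ has supports $V(P_1),\dots,V(P_k)$. So if $V(P_i)\cap V(P_j)\neq\emptyset$ for some $i\neq j$, every basis term of $S_{P_1}\cdots S_{P_k}$ vanishes.

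\emph{Relation (2).} If $G$ has no edges between $P$ and $Q$ in either direction, then $E(A,B)=E(B,A)=\emptyset$ for all terms $A$ of $S_P$ and $B$ of $S_Q$. Hence $AB=A\sqcup B=BA$, and $S_PS_Q=S_QS_P$.

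\emph{Surjectivity.} Since $\hall(G)$ is $\N$-graded by number of vertices, \cref{corgenerate} together with \cref{propprimitivesOL} shows that $f$ is surjective.

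\emph{Injectivity.} Let $\sum_i\lambda_i x_{P_{i,1}}\cdots x_{P_{i,k_i}}$ lie in the kernel. By (1) we may discard monomials with overlapping supports, since these are already in $R$. By \cref{proprelationprimitifs} we reduce to a minimal relation in which every monomial uses the same multiset $\{P_1,\dots,P_k\}$ of pairwise vertex-disjoint primitives, in various orders $\sigma$. Modulo (2), two orders are equivalent when they are linear extensions of the same orientation of the "conflict graph" $\Gamma$. Here $\Gamma$ has vertices $1,\dots,k$ and an edge $i$---$j$ whenever $G$ has an edge between $P_i$ and $P_j$; the orientation of $\Gamma$ induced by an order $\sigma$ records which of $P_i,P_j$ comes first. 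So the task is to show that the elements $S_{P_{\sigma(1)}}\cdots S_{P_{\sigma(k)}}$ attached to distinct orientations of $\Gamma$ are linearly independent. Equivalently, the Gram matrix must be nondegenerate.

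To compute the Gram matrix, pair each product against another product using \cref{generalizedadjunction} and \cref{formulprimitivegeneral}, with $\phi=\phi'$ from the twisted bialgebra structure \cref{thmbialgebraOL}. In
\[
\langle S_{P_{\sigma(1)}}\cdots S_{P_{\sigma(k)}},\, S_{P_{\tau(1)}}\cdots S_{P_{\tau(k)}}\rangle
\]
only the terms that distribute the $S_{P_{\tau(j)}}$ as singletons into the tensor slots survive, by \cref{proporthogonalityprimitive}. This leaves the single bijection matching slot $i$ to $P_{\sigma(i)}$, with coefficient $2^{N(\sigma,\tau)}\prod_i\langle S_{P_i},S_{P_i}\rangle$. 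The exponent $N(\sigma,\tau)$ is the total number of $G$-edges between pairs $P_i,P_j$ whose relative order differs in $\sigma$ and $\tau$. This is exactly the sum over the edges of $\Gamma$ on which the two orientations disagree, weighted by edge multiplicities $m_{ij}$ (the number of $G$-edges between $P_i$ and $P_j$, both directions).

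\emph{Main obstacle: nondegeneracy.} The remaining step is to show that the matrix $M_{\sigma,\tau}=\prod_{\{i,j\}\text{ disagreeing}}2^{m_{ij}}$, indexed by the acyclic orientations of $\Gamma$, is nondegenerate. My first attempt is a degeneration argument: set $t=2$, note that each $m_{ij}\ge 1$, and view $M$ as a matrix in a variable $t$. Its diagonal entries are $1$ and its off-diagonal entries are positive powers of $t$, so $\det M\equiv 1 \pmod t$. This gives $\det M\neq 0$ at $t=2$ by parity, since $\det M$ is an integer congruent to $1$ modulo $2$. Alternatively, work over $\F_2$, where $M$ reduces to the identity. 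This parity argument settles linear independence. With relation (2) already identifying equivalent orders, the kernel of $f$ is exactly $(R)$.
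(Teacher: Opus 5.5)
\textbf{The injectivity step has a gap: the Gram matrix is computed incorrectly, and the parity argument rests entirely on it.} Your reduction to a single multiset and to acyclic orientations of $\Gamma$ is fine. You then claim that only pairs ordered differently in $\sigma$ and $\tau$ contribute to the power of $2$. This discards the $\phi(\mathrm{Ord})$ terms of \cref{formulprimitivegeneral}, and the claim is false.

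Take $G$ with two vertices and one edge $1\to 2$, and $P_i=\{i\}$. Write $[12]$ and $[1\to 2]$ for the two subgraphs on $\{1,2\}$. Then $S_{P_1}S_{P_2}=[12]+[1\to 2]$ and $S_{P_2}S_{P_1}=[12]$, so the Gram matrix is $\begin{pmatrix}2&1\\1&1\end{pmatrix}$, not $\begin{pmatrix}1&2\\2&1\end{pmatrix}$.

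In general, expand the products directly: a product of disjoint subgraphs is the sum, with coefficient $1$, over all subsets of the forward edges. This gives
\[
\langle S_{P_{\sigma(1)}}\cdots S_{P_{\sigma(k)}},\,S_{P_{\tau(1)}}\cdots S_{P_{\tau(k)}}\rangle=\Big(\prod_i\langle S_{P_i},S_{P_i}\rangle\Big)\,2^{|F_\sigma\cap F_\tau|},
\]
where $F_\sigma$ is the set of edges of $G$ going from some $P_a$ to some $P_b$ with $a$ before $b$ in $\sigma$. So agreeing pairs contribute and disagreeing pairs do not. The diagonal entries carry the factor $2^{|F_\sigma|}$, which is even as soon as one edge points forward. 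Hence the matrix is not congruent to the identity mod $2$, and parity proves nothing.

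You also cannot read the correct entries off \cref{thmbialgebraOL} with $\phi'=\phi$. In the example above, $\Delta(S_{P_1}S_{P_2})$ contains $S_{P_2}\otimes S_{P_1}$ with coefficient $1$, whereas the $(\phi,\phi)$-twist predicts $2$. The reason is that an edge of $E(A_2,B_1)$ would point from the right tensor factor into the left one, so it may not be added. The twist that actually holds is $2^{|E(A_1,B_2)|}$ alone. With $\phi'=\phi$, your Gram matrix would have rank one.

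\textbf{How to repair it.} Since $2^{|F\cap F'|}$ is the number of sets $S\subseteq F\cap F'$, the Gram matrix equals $c\,ZZ^{T}$, where $c>0$ and $Z_{\sigma,S}=1$ if $S\subseteq F_\sigma$ and $0$ otherwise. Distinct acyclic orientations of $\Gamma$ give distinct sets $F_\sigma$: every edge of $\Gamma$ carries at least one edge of $G$, and that edge lies in $F_\sigma$ exactly when it points forward. The columns of $Z$ indexed by the sets $F_\tau$ form a unitriangular matrix for inclusion. Hence $Z$ has full row rank and the Gram matrix is positive definite.

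This triangularity is what the paper's proof uses, in leading-term form. It takes a subgraph of maximal edge count among the terms of the products; for a given order $\sigma$, this is the graph containing all edges of $G$ inside the $V(P_i)$ together with $F_\sigma$. It then argues that such a term can be shared by two products only when their orders are related by the commutation relations. That argument never needs the twisted coproduct.
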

\begin{proof}
This isomorphism maps $x_P$ to $S_P$. It is easy to verify that it is well defined. By \cref{corgenerate} it is surjective. Now it remains to prove that every relation between primitive elements are induced by relations of $R$. By \cref{proprelationprimitifs} we only considered relation where products are permutation of the same terms. Let $\sum_\sigma a_\sigma S_{P_{\sigma(1)}}...S_{P_{\sigma(k)}}$ be a relation, we already suppose that all set of vertices are disjoint otherwise by the first relation of $R$, it is zero. Among all the products in this sum, take a subgraph $K$ appearing in these products with the maximal amount of edges. As the sum is zero, it means that $K$ appears in at least two products, we denote by $S_1...S_k$ and $S_1'...S_k'$ these two products. Suppose that we maximise the correspondence of this two products from the left to the right using the commutativity relation of $R$. Denote by $j$ the first index where the terms of the two products are different ($j$ is well defined since the order of terms are different). 
We have the following :
\begin{equation*}
S_1...S_{j-1}S_j...S_l...S_k=
    S_1'...S'_{j-1}S_l...S_j...S_k'
\end{equation*}
Now in the second expression, try to pass $S_j$ to the left. If $S_j$ arrive just after $S'_{j-1}$ it is a contradiction with the hypothesis of maximal correspondence. So there is an integer $m$, such that on the way from $S_j$ to $S_{j-1}$ there is edges between $S_m$ and $S_j$ in $G$. If there is an edge from $S_m$ to $S_j$ it is a contradiction of the fact that $K$ appears in $S_1...S_k$ by the maximal number of edges of $K$. If there is an edge from $S_j$ to $S_m$ it is a contradiction of the fact that $K$ has the maximal number of edges. Therefore, there are no other relations besides those in $R$.
$\qedhere$
\end{proof}
\begin{rmk}
    Notice that the presentation of the Hall algebra does not depend on the orientation of the graph.
\end{rmk}
\begin{ex}
 In the following expressions, the letter $X$ and $Y$ represent any product of variables.
\begin{enumerate}
    \item For a graph $G$ with $n$ vertices and no edges, we have
    \begin{equation*}
        \hall(G)\simeq \Q[ x_1,x_2,...,x_n]/(x_i^2 \text{ for }i\in \{1,...,n\}).
    \end{equation*}
    \item For the graph $K_2$
    \begin{equation*}
        \hall(G)\simeq \Q\langle x_1,x_2 \rangle / (x_iXx_i \text{ for }i\in \{1,2\}).
    \end{equation*}
    \item For the graph $K_3$
    \begin{align*}
        \hall(G)\simeq \Q\langle x_1,x_2,x_3,x_4,x_5 \rangle / (x_i&Xx_i \text{ for }i\in \{1,2,3\},Xx_iY \text{ for }i\in \{4,5\} \\ &\text{ with } X \text{ or }Y \text{ non empty}).
    \end{align*}
    \item If $G$ is a tree with $n$ vertices
    \begin{align*}
    \hall(G)\simeq \Q\langle x_1,x_2,..,x_n \rangle / (x_i&Xx_i \text{ for }i\in \{1,...,n\}, x_ix_j=x_jx_i \text{ if there are}\\ &\text{ no edges between }i \text{ and }j).
    \end{align*}
\end{enumerate}
\end{ex}
\begin{rmk}\label{rmknaivecoproduct}
    If we consider the naive coproduct as in \cref{defnaivecoproduct}
    for undirected graphs, it is easy to see that the primitives are the directed subgraphs whose underlying undirected graph is connected. We thus see that in this case the number of primitives is much larger, and we no longer obtain results as precise as those in this section.
\end{rmk}
\subsection{The Hall polynomial of a graph}
\begin{definition}
    For $G$ a directed graph we denote by $P^m_G$ the Poincaré polynomial of the graded vector space of primitives :
    $$P^m_G(t)=\sum_{I\in \N V}^n \text{dim}(Prim(G)_I)t_I,$$
    where $t_I$ is defined to be $\prod t_i^{k_i}$ if $I=\sum k_i i\in \N V$. We will refer to this polynomial as the multivariate Hall polynomial of the graph $G$.
\end{definition}
As a direct consequence of \cref{cordimensionprimitives} we have
\begin{cor}
 The polynomial $P^m_G$ is an invariant of the underlying undirected graph.
\end{cor}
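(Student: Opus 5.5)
The plan is to prove that the coefficients $\dim(Prim(G)_I)$ depend only on $\bar{G}$. These coefficients are exactly what \cref{cordimensionprimitives} controls, so the corollary follows once we make precise that the data determining them is orientation-free.

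First, by \cref{propprimitivesOL}, a basis of $Prim(G)$ is given by the elements $S_P$ for $P$ induced by a strongly connected orientation. Each $S_P$ is homogeneous of degree $\sum_{v\in V(P)}v\in\N V$, because every term $P_E$ in $S_P$ has vertex set $V(P)$. Hence $\dim(Prim(G)_I)$ counts the subgraphs $P$ induced by a strongly connected orientation whose vertex multiset is $I$. Since vertices are not repeated, this count is nonzero only for $I=\sum_{v\in V}v$ with $V\subseteq V(G)$.

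Second, I would exhibit this count as an orientation-independent quantity. A subgraph $P$ with $V(P)=V$ is induced by a strongly connected orientation exactly when $P=(G_V)_{C\cap E(G_V)}$ for some strongly connected orientation $C$ of $\overline{G_V}$. Different orientations $C$ give different $P$: an edge $e$ of $\overline{G_V}$ lies in $P$ precisely when $C$ orients $e$ as in $G$, so $C$ can be recovered from $P$. Thus the map $C\mapsto (G_V)_{C\cap E(G_V)}$ is a bijection between the strongly connected orientations of $\overline{G_V}=\bar{G}_V$ and the elements $P$ of degree $I$. Because edges are labelled, parallel edges are distinguished, so the argument goes through edge by edge.

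Consequently $\dim(Prim(G)_I)$ equals the number of strongly connected orientations of the induced undirected subgraph $\bar{G}_V$, and this depends only on $\bar{G}$. Summing over $I$, two directed graphs with the same underlying undirected graph have the same $P^m_G$.

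The only delicate step is the injectivity of $C\mapsto P$. I expect it to hold because, for an edge $e$ of $\overline{G_V}$ with extremities $a,b$, $G$ orients $e$ in one fixed direction. Taking $C\cap E(G_V)$ therefore records, for each edge, whether $C$ agrees with $G$, and this determines $C$ completely.
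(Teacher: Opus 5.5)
Your proof is correct and follows the paper's route. The paper deduces the statement directly from \cref{cordimensionprimitives}, whose one-line proof says that $\dim Prim(G)_I$ depends only on the orientations of induced subgraphs of $\bar{G}$; this is exactly what you extract from \cref{propprimitivesOL}, and your explicit bijection $C\mapsto (G_V)_{C\cap E(G_V)}$ between strongly connected orientations of $\bar{G}_V$ and the basis primitives of degree $\sum_{v\in V}v$ spells out the step the paper leaves implicit (it is also what underlies the paper's later identification of the coefficient of $t_I$ with $T_{\bar{G_I}}(0,2)$).
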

\begin{rmk}
    For $I\subseteq V(G)$ we see it in $\N V$ by $I= \sum_{i\in I}i$. As every graduation not of this form is zero in $\hall(G)$ it's easy to see that every non zero coefficient of $P^m_G$ appears for a $I\in \N V$ of this form.
\end{rmk}
\begin{ex}
    \begin{enumerate}
        \item If $\bar{G}$ is a tree on $n$ vertices, then $P^m_G=\sum_{i\in V}t_i$.
        \item If $\bar{G}$ is $K_3$ the complete graph on $3$ vertices, then $P^m_G=t_1+t_2+t_3+2t_1t_2t_3$.
        \item If $\bar{G}$ is the graph $K_4$, then $P^m_G=t_1+t_2+t_3+t_4+2t_1t_2t_3+2t_1t_2t_4+2t_2t_3t_4+2t_1t_3t_4+24t_1t_2t_3t_4$.
        \item In general if $\bar{G}=K_n$, then $P^m_G=\sum_{I\subseteq V}c_{|I|}t_I,$ where $(c_k)_{k\in \N}$ is the sequence counting the number of strongly connected labeled tournaments on $k$ nodes (OEIS A054946). 
    \end{enumerate}
        \end{ex}
Since this polynomial counts strongly connected orientations (\cref{propprimitivesOL}), it seems reasonable to draw a connection to the Tutte polynomial. Indeed, the evaluation of the Tutte polynomial of a connected graph in $(0,2)$ counts the number of strongly connected orientations of that graph. This property can be found in any good survey on the Tutte polynomial ; for example, \cite{tuttepolynomial}.
\begin{prop}
Let $G$ be a directed graph such that $\bar{G}$ is connected. Then the coefficient of $t_{V(G)}$ is equal to $T_{\bar{G}}(0,2)$. More generally, for $I\subseteq V(G)$ the coefficient of $t_I$ is equal to $T_{\bar{G_I}}(O,2)$, where $G_I$ denotes the induced subgraph on the vertices of $I$.
\end{prop}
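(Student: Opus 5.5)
By \cref{propprimitivesOL}, the coefficient of $t_I$ in $P^m_G$ is the dimension of $Prim(G)_I$. This equals the number of subgraphs $P$ of $G$ with $V(P)=I$ that are induced by a strongly connected orientation. So the plan is to build a bijection between these subgraphs and the strongly connected orientations of $\bar{G_I}$, and then quote the classical fact that $T_{\bar H}(0,2)$ counts strongly connected orientations of a connected graph $\bar H$. The general statement for $I$ reduces to the case $I=V(G)$ applied to the directed graph $G_I$. Indeed, $Prim(G)_I$ only involves subgraphs on vertex set $I$, and the notion "induced by a strongly connected orientation" refers only to $G_{V(P)}=G_I$.

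So fix $I=V(G)$. Consider the map that sends a strongly connected orientation $C$ of $\bar G$ to $G_{C\cap E(G)}$, the subgraph of $G$ on all vertices keeping exactly the edges of $G$ whose orientation agrees with $C$.

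\textbf{Surjectivity.} This holds by definition onto the set of subgraphs induced by a strongly connected orientation.

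\textbf{Injectivity.} Every edge $e$ of $\bar G$ comes from exactly one directed edge of $G$, since the edges are labelled and multiple edges are kept separately. Hence $C$ is recovered from $P=G_{C\cap E(G)}$: if $e\in E(P)$, then $C$ orients $e$ as in $G$; otherwise $C$ orients $e$ opposite to $G$. So the map is a bijection, and the coefficient equals the number of strongly connected orientations of $\bar G$.

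\textbf{Tutte evaluation.} It remains to check that $T_{\bar G}(0,2)$ counts these orientations when $\bar G$ is connected, as recalled from \cite{tuttepolynomial}. The main point to watch is the convention for loops and multi-edges. A loop can be oriented in only one way, yet it contributes a factor $y=2$ to the Tutte polynomial. Loops at a vertex of $G$ also matter on the Hall side: a vertex with loops gives subgraphs keeping different subsets of its loops, and $S_P$ ranges over these. I would check this case by hand.

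If the counts disagree there, I would either restrict to loopless graphs, or note that the Hall side also counts $2^{\#\text{loops}}$. The latter is plausible: with $k$ loops at a single vertex, all $2^k$ subsets of loops give strongly connected subgraphs, matching $T(0,2)=2^k$. This suggests the bijection should be with orientations in which loops are treated as two-way, each loop being either kept or not. I would state the bijection at the level of edges this way so that loops are handled uniformly. The connectedness hypothesis is needed only because $T(0,2)$ counts strongly connected orientations for connected graphs. For $I$ with $\bar{G_I}$ disconnected, both sides vanish when at least one edge or a second component is present, which I would mention as a remark.
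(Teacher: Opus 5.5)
Your main argument is correct and is what the paper's one-line proof (``a direct application of \cref{propprimitivesOL}'') unpacks to. The coefficient of $t_I$ counts the $P$ with $V(P)=I$ induced by a strongly connected orientation. The map $C\mapsto (G_I)_{C\cap E(G_I)}$ is a bijection from the strong orientations of $\bar{G_I}$ onto these, and one then quotes the classical evaluation $T(0,2)$. Your handling of loops is also right and goes beyond the paper, which ignores them. Every subgraph on one vertex is primitive, so a vertex with $k$ loops contributes $2^k$ primitives. This matches the factor $y^k$ in the Tutte polynomial only if a loop may be ``kept or reversed'' in the orientation, as you propose.

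The step that fails is your closing remark that for $\bar{G_I}$ disconnected ``both sides vanish''. Only the Hall side vanishes, since a disconnected graph has no strongly connected orientation. The Tutte polynomial is multiplicative over connected components, so $T_{\bar{G_I}}(0,2)$ is the product of the values on the components. It counts totally cyclic orientations and is nonzero as soon as every component is bridgeless. Some counterexamples:
\begin{itemize}
\item If $a,b$ are non-adjacent loopless vertices and $I=\{a,b\}$, then $T_{\bar{G_I}}=1$ while the coefficient of $t_at_b$ is $0$.
\item Two disjoint triangles give $4$ against $0$.
\item $I=\emptyset$ gives $1$ against $0$.
\end{itemize}
So the ``more generally'' clause is false as literally stated. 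Your reduction to $G_I$ is valid precisely when $\bar{G_I}$ is connected and nonempty, and you should state that hypothesis explicitly rather than claim vanishing. The paper's own proof tacitly needs the same hypothesis.
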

\begin{proof}
This is a direct application of \cref{propprimitivesOL}.
$\qedhere$
\end{proof}
\begin{prop}
Let $G$ and $H$ be two directed graphs. We have the following identities 
$$P^m_{G\sqcup H}=P^m_{G-H}=P^m_G+P^m_H$$
where $G-H$ denotes the graphs $G$ and $H$ connected by a single edge.
\end{prop}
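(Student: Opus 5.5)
By \cref{propprimitivesOL}, and as in the previous proposition, the coefficient of $t_I$ in $P^m_G$ is the number of strongly connected orientations of $\bar{G_I}$, the undirected graph induced on the vertex set $I\subseteq V(G)$. This is $0$ unless $\bar{G_I}$ is connected. So the plan is to compare, for each vertex set $I$, the induced subgraphs of $G\sqcup H$ or $G-H$ with those of $G$ and $H$ separately. We use that the labels of $V(G)$ and $V(H)$ are disjoint, so that $P^m_G$ and $P^m_H$ live in disjoint sets of variables.

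\emph{Disjoint union.} Let $I\subseteq V(G)\sqcup V(H)$ meet both $V(G)$ and $V(H)$. Then the induced undirected graph on $I$ is disconnected, so it has no strongly connected orientation and the coefficient of $t_I$ is $0$. If $I\subseteq V(G)$, the subgraph of $G\sqcup H$ induced on $I$ is exactly $G_I$, so its coefficient is the coefficient of $t_I$ in $P^m_G$; the case $I\subseteq V(H)$ is symmetric. Summing over all $I$ gives $P^m_{G\sqcup H}=P^m_G+P^m_H$.

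\emph{Single bridging edge.} Let $e=\{g,h\}$ with $g\in V(G)$, $h\in V(H)$ be the connecting edge. If $I$ lies in one side, the induced subgraph is unchanged and we argue as before. Otherwise, suppose $I$ meets both sides and the induced graph on $I$ is connected. Then it must contain $g$ and $h$, and $e$ is a bridge of it. A graph with a bridge has no strongly connected orientation: if $e$ is oriented from $g$ to $h$, no directed path returns from the $H$-side to the $G$-side, since the only edge between the two sides is $e$. Hence the coefficient of $t_I$ is $0$, and $P^m_{G-H}=P^m_G+P^m_H$.

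The main point needing care is the bridge argument, which is short. One should also note that the orientation of $e$ in the directed graph $G-H$ is irrelevant, because $P^m$ depends only on the underlying undirected graph by \cref{cordimensionprimitives}. Finally, vertices carrying loops still count as strongly connected single-vertex pieces, and this does not interfere with the argument.
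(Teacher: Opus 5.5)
Your proof is correct and follows the paper's argument. An induced subgraph meeting both sides is disconnected in $G\sqcup H$, and if it is connected in $G-H$ it has the joining edge as a bridge, so in either case it admits no strongly connected orientation; induced subgraphs lying within one side are unchanged. You also make explicit details the paper leaves implicit; the only point to add is that $I$ should range over nonempty vertex sets, since $Prim(G)$ has no degree-zero part and otherwise the empty set would be counted twice in $P^m_G+P^m_H$.
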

\begin{proof}
It's clear there is no strong orientation for an induced subgraph with a non empty part in $G$ and $H$ as this graph is not connected. If $G$ and $H$ are connected by a single edge it is the same argument as this edge is a bridge (i.e. an edge that disconnects the graph if it is removed) and a graph with a bridge can not have a strong orientation.
$\qedhere$
\end{proof}
We now define a specialization of the multivariate Hall polynomial to get a polynomial that forgets the labels.
\begin{definition}
    For $G$ a directed graph we define the Hall polynomial of $G$ denoted by $P_G$ as the specialization of $P^m_G$ in $t_i=t$ for all $i\in V$.
\end{definition}
\begin{rmk}
This polynomial inherits all the properties of the multivariate Hall polynomial, namely
    \begin{itemize}
        \item The polynomial $P_G$ is an invariant of the underlying graph $\bar{G}$.
        \item For $H$ and $G$ two directed graphs we have $P_{G\sqcup H}=P_{G-H}=P_G+P_H$.
        \item The coefficient of $t^k$ in $P_G$ counts the number of strongly connected orientations on induced subgraphs with $k$ vertices.
    \end{itemize}
\end{rmk}
\begin{ex}
    \begin{enumerate}
        \item If $\bar{G}$ is a tree on $n$ vertices, then $P_G=nt$.
        \item If $\bar{G}$ is the complete graph $K_n$ on $n$ vertices, then 
        $$P_G=\sum_{i=0}^n\binom{n}{k}c_k t^k.$$
        
        In particular $P_{K_2}=2t$, $P_{K_3}=3t+2t^3$, $P_{K_4}=4t+8t^3+24t^4$.
        \item Consider the graphs $G_1$ and $G_2$ of \cref{exgraphssametutte}. These are two graphs that share the same Tutte polynomial 
        \begin{align*}
            x^6 + 5x^5 &+ 5x^4y + 5x^3y^2 + 5x^2y^3 + 3xy^4 + y^5 + 10x^4 + 15x^3y + 14x^2y^2 + 9xy^3 + 3y^4 \\&+ 10x^3 + 16x^2y + 12xy^2 + 4y^3 + 5x^2 + 7xy + 3y^2 + x + y
        \end{align*}
        but have different Hall polynomials :
        $$P_{G_1}(t)=7t^3+10t^3+30t^4+90t^5+138t^6+126t^7$$
        and
        $$P_{G_2}(t)=7t^3+10t^3+30t^4+86t^5+138t^6+126t^7.$$
        \begin{figure}[H] \centering\begin{tikzpicture}
    \node[draw,circle, scale=1](4) at (0,0){4};
    \node[draw,circle, scale=1](5) at (2,0){5};
    \node[draw,circle, scale=1](6) at (4,0){6};
    \node[draw,circle, scale=1](1) at (0,3){1};
    \node[draw,circle, scale=1](2) at (2,3){2};
    \node[draw,circle, scale=1](3) at (4,3){3};
    \node[draw,circle, scale=1](7) at (3,2.4){7};
    \draw (1)--(2);
    \draw (1)--(4);
    \draw (1)--(5);
    \draw (2)--(3);
    \draw (2)--(5);
    \draw (2)--(7);
    \draw (3)--(5);
    \draw (3)--(6);
    \draw (4)--(5);
    \draw (5)--(6);
    \draw (5)--(7);
 \end{tikzpicture} 
 \begin{tikzpicture}
    \node[draw,circle, scale=1](4) at (0,0){4};
    \node[draw,circle, scale=1](5) at (2,0){5};
    \node[draw,circle, scale=1](6) at (4,0){6};
    \node[draw,circle, scale=1](1) at (0,3){1};
    \node[draw,circle, scale=1](2) at (2,3){2};
    \node[draw,circle, scale=1](3) at (4,3){3};
    \node[draw,circle, scale=1](7) at (3,2.4){7};
    \draw (1)--(2);
    \draw (1)--(4);
    \draw (2)--(4);
    \draw (2)--(3);
    \draw (2)--(5);
    \draw (2)--(7);
    \draw (3)--(5);
    \draw (3)--(6);
    \draw (4)--(5);
    \draw (5)--(6);
    \draw (5)--(7);
 \end{tikzpicture} 
 \caption{Two graphs $G_1$ (on the left) and $G_2$ (on the right).} \label{exgraphssametutte}
\end{figure}
        \item Flower graphs are a good class of examples to consider if we want to found graphs with the same Tutte polynomial but different Hall polynomial. See \cite{flowergraphs} for more information on these graphs. Flower graphs of same size and same multiset of petals have the same Tutte polynomial but it's not true for the Hall polynomials in general. See \cref{examplesflowergraphs} for some computations. Here we define a flower graph by the length of its central cycle and the sizes of its petals in clockwise order (the order doesn't matter as the a flower graph of size $n$ has the diedral group $D_n$ as automorphism group).
\begin{figure}[H] \centering\begin{tikzpicture}
    \node[draw,circle, scale=0.7](1) at (0,0){1};
    \node[draw,circle, scale=0.7](2) at (-1,2){2};
    \node[draw,circle, scale=0.7](3) at (1,2){3};
    \node[draw,circle, scale=0.7](4) at (2,0){4};
    \node[draw,circle, scale=0.7](5) at (0,-1){5};
    \node[draw,circle, scale=0.7](6) at (2,-1){6};
    \node[draw,circle, scale=0.7](7) at (1,-2){7};
     \node[draw,circle, scale=0.7](8) at (2,3){8};
      \node[draw,circle, scale=0.7](9) at (3,1){9};
    \draw (1)--(3);
    \draw (1)--(4);
    \draw (3)--(4);
    \draw (1)--(2);
    \draw (2)--(3);
    \draw (3)--(8);
    \draw (8)--(9);
    \draw (9)--(4);
    \draw (4)--(6);
    \draw (6)--(7);
    \draw (5)--(7);
    \draw (5)--(1);
 \end{tikzpicture} 
 \caption{The flower graph of size $3$ with petals $[3,4,5]$.} \label{exflowergraph}
\end{figure}        
\begin{figure}[H]
\begin{center}
\begin{tabular}{|>{\centering\arraybackslash}m{2cm}|>{\centering\arraybackslash}m{3cm}|>{\centering\arraybackslash}m{8cm}|}
\hline
Size of the flower & Sizes of petals in order & Hall polynomial \\
\hline
$3$ & $[3,0,3]$ & $5t+6t^3+12t^4+18t^5$ \\
\hline 
$3$ & $[3,4,5]$ & $9t+4t^3+8t^4+8t^5+ 24t^6+ 18t^7+ 18t^8+ 46t^9$ \\
\hline
$4$ & $[3,3,4,4]$ & $10t+4t^3+6t^4+16t^5+38t^6+76t^7+126t^8+108t^9+146t^{10}$ \\
\hline 
$4$ & $[3,4,3,4]$ & $10t +4t^3 +6t^4 +12t^5 +46t^6 +72t^7+ 126t^8+ 108t^9+ 146t^{10}$ \\
\hline 
\end{tabular}
\caption{Some examples of Hall polynomials of flower graphs} \label{examplesflowergraphs}
\end{center}
\end{figure}

    \end{enumerate}
    
\end{ex}
\begin{rmk}
    Notice that the last two graphs have the same Tutte polynomial but different Hall polynomials. As a last example, the graphs $(n,[3,3,0,...,0])$ and $(n,[3,0,3,0,...,0])$ have the same Tutte polynomial for each $n$ but different Hall polynomials for $n\geq 4$. For the moment, we are not aware of any examples of graphs with the same Hall polynomial but different Tutte polynomials.
\end{rmk}
\bibliographystyle{amsalpha}
\bibliography{biblio.bib}

\end{document}